\documentclass[11pt]{amsart}

\usepackage{amsmath,amssymb,amsxtra,mathrsfs,latexsym}
\usepackage{graphicx}
\usepackage[all]{xy}
\usepackage{float}
\usepackage{verbatim}
\usepackage{adjustbox}
\usepackage{url}

\usepackage{tikz}
\usetikzlibrary{shapes.misc, positioning}
\usepackage{tikz-cd}

\usepackage[colorlinks=true, linkcolor=blue, citecolor=blue, urlcolor=blue, filecolor=blue]{hyperref}

\newtheorem{theorem}{Theorem}[section]
\newtheorem{claim}[theorem]{Claim}
\newtheorem{setup}{Setup}

\newtheorem{lemma}[theorem]{Lemma}

\newtheorem{proposition}[theorem]{Proposition}
\newtheorem{corollary}[theorem]{Corollary}

\theoremstyle{definition}
\newtheorem{definition}[theorem]{Definition}

\newtheorem{question}[theorem]{Question}

\theoremstyle{remark}
\newtheorem{remark}[theorem]{Remark}

\newtheorem{notation}[theorem]{Notation}
\newtheorem{fact}[theorem]{Fact}

\newcommand\cat[1]{{}^\curvearrowright #1}

\DeclareMathOperator{\cof}{cof}

\DeclareMathOperator{\crit}{crit}

\DeclareMathOperator{\dom}{dom}

\DeclareMathOperator{\HOD}{HOD}
\DeclareMathOperator{\OD}{OD}
\DeclareMathOperator{\ob}{OB}

\DeclareMathOperator{\Add}{Add}

\DeclareMathOperator{\mc}{mc}

\DeclareMathOperator{\id}{id}
\newcommand{\cf}{{\rm cf}}

\def\s{\subseteq}

\def\forces{\Vdash}

\newcommand{\one}{\mathop{1\hskip-3pt {\rm l}}}

\newcount\skewfactor
\def\mathunderaccent#1#2 {\let\theaccent#1\skewfactor#2
  \mathpalette\putaccentunder}
\def\putaccentunder#1#2{\oalign{$#1#2$\crcr\hidewidth
  \vbox to.2ex{\hbox{$#1\skew\skewfactor\theaccent{}$}\vss}\hidewidth}}

\def\smallbox#1{\leavevmode\thinspace\hbox{\vrule\vtop{\vbox
   {\hrule\kern1pt\hbox{\vphantom{\tt/}\thinspace{\tt#1}\thinspace}}
   \kern1pt\hrule}\vrule}\thinspace}

\newcommand\ale[1]{\marginpar{Alejandro: #1}}

\title[Compactness phenomena and  Magidor's covering theorem]{Compactness phenomena in HOD and the Optimality of Magidor's Covering theorem}

\author[Benhamou]{Tom Benhamou}
\address[Benhamou]{School of Mathematical Sciences, Tel Aviv University, Tel Aviv 69978, Israel}
\email{tombenhamou@tauex.tau.ac.il}
\urladdr{https://www.math.tau.ac.il/~tombenhamou/}

\author[Cummings]{James Cummings}
\address[Cummings]{Department of Mathematical Sciences, Carnegie Mellon University, Pittsburgh, PA 15213, USA}
\email{jcumming@andrew.cmu.edu}

\author[Goldberg]{Gabriel Goldberg}
\address[Goldberg]{Department of Mathematics, UC Berkeley, CA 94720, USA}
\email{ggoldberg@berkeley.edu}

\author[Hayut]{Yair Hayut}
\address[Hayut]{Einstein Institute of Mathematics, Hebrew University of Jerusalem, Givat-Ram, 91904, Israel.}
\email{yair.hayut@mail.huji.ac.il}

\author[Poveda]{Alejandro Poveda}
\address[Poveda]{Departamento de Matemáticas, CUNEF Universidad,  Madrid, 28040, Spain \textbf{and} 
Fachbereich Mathematik, Universität Hamburg,
Hamburg, 20146, Germany.}
\email{alejandro.poveda@cunef.edu}
\urladdr{https://www.alejandropovedaruzafa.com}
\thanks{Poveda presented the results of this paper at the Simons Semester \emph{``Gödel's Program''} held at IMPAN, as well as at the Set Theory seminar of the Hebrew University of Jerusalem. The authors are very grateful to all the participants for their comments and constructive feedback. }

\begin{document}

\maketitle

\begin{abstract}
We continue the study of compactness phenomena between the set-theoretic universe and $\HOD$ initiated by Goldberg--Poveda \cite{GolPov}. We focus on compactness phenomena around the power-set functions of $V$ and $\HOD$. We prove: (1) A singular strong limit cardinal with uncountable cofinality cannot be the first place where $\mathcal{P}(\cdot )$ and $ \mathcal{P}^{\HOD}(\cdot)$ disagree. (2) Assuming the existence of a measurable cardinal, $\aleph_\omega$ can be the first place where $\mathcal{P}(\aleph_\omega)\neq \mathcal{P}^{\HOD}(\aleph_\omega)$, answering a question of Hayut. (3) If $\kappa$ is strong limit singular of uncountable cofinality, $\HOD$ is correct about cardinals less than or equal to $\kappa^+$ and the GCH holds in $\HOD$ below $\kappa^+$ then $(\HOD, V)$ has the $\cf(\kappa)^+$-cover property. We also show that the GCH assumption in (3) is necessary, which demonstrates that   
Magidor's classical Covering Theorem is optimal.
\end{abstract}

\section{Introduction}
 
The present manuscript is concerned with compactness phenomena within Gödel's universe of \emph{Hereditarily Ordinal Definable} sets ($\HOD$) \cite{Godel}. Compactness is the phenomenon whereby local properties of a mathematical structure (e.g., a graph) determine global properties of that structure. To illustrate this principle, consider the classical theorem of Erd"os--de Bruijn \cite{DeBrujin}, which asserts that a loopless planar graph $\mathcal{G}$ has chromatic number at most $k\in\mathbb{N}$ provided that every finite subgraph $\mathcal{H}\leq\mathcal{G}$ has chromatic number at most $k$. Combined with the \emph{Four Color Theorem}, this compactness result implies that every such graph $\mathcal{G}$, including infinite ones, has chromatic number at most $4$. This is but one instance of a ubiquitous phenomenon across mathematics, with manifestations in areas such as Functional Analysis \cite{Abad,CervantesPoveda}, Algebra \cite{MagidorShelah, EklofMekler, CortesPoveda, CoxPoveda}, Topology \cite{BagMag}, and Category Theory \cite{Bagetal}, among many others.
 
\smallskip
 
Set theory, the branch of mathematics devoted to the study of the infinite, has a
long-standing tradition of investigating compactness phenomena
\cite{MitchellAronszajn,MagRefl,MR924672,CumFor,CFM,PartIII,GolPov,%
JakobPoveda,BenhamouSinapova,HayutPoveda}.
A classical result in this vein, which has inspired much subsequent work, is
\emph{Silver's theorem} \cite{MR0429564}: a singular cardinal of uncountable cofinality
cannot be the first place where Cantor's \emph{Generalized Continuum Hypothesis} (GCH)
fails. For instance, if $2^{\aleph_\alpha}=\aleph_{\alpha+1}$ for all $\alpha<\omega_1$,
then $2^{\aleph_{\omega_1}}=\aleph_{\omega_1+1}$ as well.
This phenomenon is exclusive to singular cardinals of uncountable cofinality. At regular cardinals the continuum function $\kappa\mapsto2^{\kappa}$
is essentially unconstrained: Gödel \cite{GodelL} and Cohen showed that the value of $2^{\aleph_0}$ is independent
of ZFC \cite{cohen1,cohen2}, and Easton \cite{Easton} that the restriction of this function
to the regular cardinals can be prescribed almost arbitrarily. At singular cardinals of
countable cofinality, Magidor \cite{MagSingII} showed that,
modulo \emph{large cardinals}, it is consistent that GCH first fails at $\aleph_\omega$.
Together, these results reveal a foundational difference between singular cardinals of
uncountable cofinality and all other infinite cardinals.

 
 \smallskip

 In a 1946 Princeton lecture \cite{Godel}, G\"odel proposed the notion of \emph{ordinal definability} as a means to circumvent the paradoxes arising from formalizing \emph{mathematical definability}. Intuitively, a set $X$ is \emph{ordinal definable} if it is the unique object satisfying some mathematical property that can be defined in terms of finitely many ordinal numbers. A set $X$ is \emph{hereditarily ordinal definable} ($X\in \HOD$) if $X$ is ordinal definable, every element of $X$ is ordinal definable, every element of every element of $X$ is ordinal definable, and so on; see \S\ref{sec: preliminaries} for a  formal definition. After Gödel, subsequent investigation by 
Myhill--Scott \cite{Myhill}, Vop\v{e}nka \cite{Vopenka}, and others began to outline the theory of HOD. 
In last few years, the study of HOD has experienced renewed interest after groundbreaking findings due to  Woodin \cite{WooPartI,WoodinPartII,midrasha,BKW} in connection to \emph{Gödel's Program}; the project aimed at finding the ultimate foundation of mathematics.
 
\smallskip
 
A wealth of work has emerged from Woodin's insights into HOD, much of it devoted to
comparing the combinatorial structure of $\HOD$ with that of the set-theoretic universe $V$.
To name a few highlights: Cummings--Friedman--Golshani \cite{CumFriGol} showed the
consistency of ZFC with $\HOD$ failing to compute the successor of any cardinal;
Cummings et al.\ \cite{AIM} established consistency of successors of singular cardinals of
countable cofinality being large cardinals in $\HOD_{\{x\}}$ for all $x \subseteq \kappa$;
Ben-Neria--Hayut \cite{BenHay} proved consistency of every regular cardinal's successor
being $\omega$-strongly measurable in $\HOD$, which was recently upgraded to all regular cardinals by  Blue--Larson--Sargsyan \cite{BlueLarsonSargsyan}; Goldberg \cite{Gol} provided a detailed
analysis of cover properties between $\HOD$ and $V$ under the HOD hypothesis, which Poveda
\cite{PovOmega} showed is essentially optimal; and most recently, Goldberg--Poveda
 established optimality of the HOD dichotomy \cite{GOP} and initiated the study of
compactness phenomena between $\HOD$ and $V$ \cite{GolPov}. In this last paper, they proved that if
$\kappa$ is a strong limit singular cardinal of uncountable cofinality, then stationarity
below $\kappa$ of either $\{\lambda<\kappa\mid \cf^{\HOD}(\lambda)<\lambda\}$ or
$\{\lambda<\kappa\mid \lambda^{+\HOD}=\lambda^+\}$ implies the corresponding property holds at
$\kappa$ itself. These results echo Silver's theorem. 
 
\smallskip

In the first part of the manuscript, we continue the study initiated in \cite{GolPov} and investigate compactness phenomena concerning the power-set function. More precisely, we study the \emph{first place of disagreement} between $\HOD$ and the set-theoretic universe $V$. The first place of disagreement between $\HOD$ and $V$, denoted by $\Delta(\HOD,V)$, is defined to be the least ordinal $\kappa$ such that $\mathcal{P}(\kappa)\nsubseteq\HOD$, provided $V\neq\HOD$. It is easy to see that any regular cardinal $\kappa$ can be forced to be $\Delta(\HOD,V)$. Moreover, if $\kappa$ possesses a mild large cardinal property, such as Mahloness, one can arrange that $\Delta(\HOD,V)=\kappa$ while preserving the large cardinal property of $\kappa$.

\smallskip

The situation with singular cardinals is, as usual,  much more subtle. In 2015, Hayut asked whether the first singular cardinal, $\aleph_\omega$, can be the first place of disagreement between $\HOD$ and $V$. 
We show that this is possible starting from the optimal large cardinal assumptions:
 
\begin{theorem}\label{thm: alephomega}
    Assuming the existence of a measurable cardinal, there is a  generic extension where $\aleph_\omega$ is strong limit and $$\Delta(\HOD, V)=\aleph_\omega.$$
    Moreover, the large cardinal assumptions used are optimal.
\end{theorem}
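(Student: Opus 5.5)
The plan has two parts: a forcing construction from a measurable cardinal, and an optimality argument showing a measurable is needed.

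\textbf{Construction.} Let $\kappa$ be measurable. Assume GCH and $V=\HOD$; this is harmless, since we can first force GCH and then code the universe into the continuum function by a class Easton-style coding that preserves the measurability of $\kappa$. Force with Prikry forcing $\mathbb{P}$ using a normal measure $U$ on $\kappa$, interleaved with Lévy collapses so that $\kappa$ becomes $\aleph_\omega$. Concretely, use Magidor's or Gitik's collapsing Prikry forcing, which adds a Prikry sequence $\langle\kappa_n\mid n<\omega\rangle$ and collapses the intervals so that $\kappa_n=\aleph_{n+1}$ (say) and $\kappa=\aleph_\omega$. GCH persists, so $\aleph_\omega$ is strong limit.

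The key point is that this forcing is \emph{weakly homogeneous} relative to the ground-model parameter $U$. Any two conditions with the same stem length can be moved to be compatible by automorphisms that permute the collapse components and change the finite stem. Hence $\HOD^{V[G]}\subseteq V$, since $V$ is itself definable from a parameter by coding.

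\textbf{Computing the first disagreement.} We must show two things.
\begin{enumerate}
\item $\mathcal P(\alpha)^{V[G]}\subseteq\HOD^{V[G]}$ for every $\alpha<\aleph_\omega$.
\item The Prikry sequence, which is a subset of $\kappa$, is not in $\HOD$.
\end{enumerate}
The second follows from homogeneity, since $\HOD^{V[G]}\subseteq V$ and the Prikry sequence is not in $V$.

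The first is the main obstacle. Below $\kappa$, the generic adds bounded subsets through the collapses, and these are not in $V$. Weak homogeneity therefore makes this part fail for the naive forcing. I would fix it by making the collapsed initial segment self-coding, so that the bounded part of the generic becomes ordinal definable. For each $n$, the generic $G\restriction\kappa_n$ is (coded by) a subset of $\kappa_n$, and it should be definable from ordinals in $V[G]$. To achieve this, I would add coding forcings at the successors $\kappa_n^{+k}$ that record $G\restriction(\kappa_n+1)$ in the GCH pattern, or in a $\diamondsuit$- or club-guessing pattern, at cardinals above $\kappa$. This is done inside the Prikry iteration, using Gitik-style "Prikry with coding" at each coordinate.

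The forcing must then be homogeneous only "above each stem": for each $n$, the quotient forcing after fixing $G\restriction\kappa_n$ is weakly homogeneous. This gives $\HOD^{V[G]}\supseteq V[G\restriction\kappa_n]$ for all $n$, since $G\restriction\kappa_n$ is OD via the coding. Every bounded subset of $\kappa$ lies in some $V[G\restriction\kappa_n]$ by the Prikry property, so $\mathcal P(\alpha)\subseteq\HOD$ for all $\alpha<\kappa$. At the same time the full sequence is not OD: a failure here would be witnessed by a condition deciding it, which contradicts homogeneity of the tail. Verifying that the coding is absorbed while the tail forcing stays homogeneous is the delicate step. I would first try Easton-support coding into $2^{\kappa^{+m}}$ patterns, using that Prikry forcing adds no bounded subsets beyond the collapses.

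\textbf{Optimality.} Suppose $\aleph_\omega$ is strong limit and $\Delta(\HOD,V)=\aleph_\omega$. If there were no inner model with a measurable cardinal, the Dodd--Jensen core model $K$ would satisfy the covering lemma. Since $K\subseteq\HOD$, $\mathcal P(\aleph_n)\subseteq\HOD$ for all $n$. Take $x\subseteq\aleph_\omega$ with $x\notin\HOD$. Each $x\cap\aleph_n$ lies in $\HOD$, and by covering these pieces can be captured by $K$-sets of small size. Gluing them, we would get $x$ OD from the sequence $\langle x\cap\aleph_n\rangle_n$, which is coded in $K$ as a countable union. Precisely: covering gives $\aleph_\omega^{+K}=\aleph_{\omega+1}$ and $K$ computes $\mathcal P(\aleph_\omega)$ correctly enough that $x\in K[\text{small}]\subseteq\HOD$, a contradiction. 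Hence there is an inner model with a measurable cardinal, so the hypothesis of the theorem is optimal in consistency strength.
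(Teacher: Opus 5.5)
\textbf{There is a genuine gap in the construction.} It fails at what you call the key point. Prikry forcing with interleaved L\'evy collapses is \emph{not} weakly homogeneous. Every Prikry point that is an endpoint of a collapse survives as a cardinal at a fixed position in the $\aleph$-hierarchy; in your arrangement, $\kappa_n=\aleph_{n+1}^{V[G]}$. So conditions whose stems differ force contradictory sentences of the form $\aleph_{n+1}=\check\alpha$ and $\aleph_{n+1}=\check\beta$, and no isomorphism of cones can exchange them.

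Consequently $\langle\kappa_n\mid n<\omega\rangle=\langle\aleph^{V[G]}_{n+1}\mid n<\omega\rangle$ is ordinal definable in $V[G]$, so it lies in $\HOD^{V[G]}\setminus V$. Both the inclusion $\HOD^{V[G]}\subseteq V$ and your item (2) are therefore false. The same objection defeats the ``homogeneity of the tail'' after fixing $G\restriction\kappa_n$, because the quotient still decides the next Prikry point through the cardinal structure. Moreover, $V[G]$ is generated by the Prikry sequence together with the collapse generics. Making the pieces $G\restriction\kappa_n$ ordinal definable at uniformly definable locations would therefore make essentially all of $G$ ordinal definable. Your plan never exhibits a subset of $\aleph_\omega$ that could stay outside $\HOD$.

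\textbf{The missing idea} is to add Prikry information that leaves no trace on bounded subsets of $\kappa$; this is the heart of the paper's proof. The paper adds Prikry points in pairs. The even points $\kappa_{2i}$ are collapse endpoints, with collapses $\mathrm{Col}(\kappa_{2i}^{++},{<}\kappa_{2i+2})$. The odd points $\kappa_{2i+1}$ lie inside the collapsed intervals and affect nothing else.

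The forgetful projection $\pi$ onto the poset $\mathbb{Q}$ that drops the odd points gives a model $V[\pi[G]]$ with the same bounded subsets of $\kappa$ as $V[G]$. Swapping odd points yields cone isomorphisms $\Gamma$ with $\pi[\Gamma[G]]=\pi[G]$. Hence $\HOD^{V[G]}_x\subseteq V[\pi[G]]$ for all $x\in V[\pi[G]]$, while $\{\kappa_{2n+1}\mid n<\omega\}\notin V[\pi[G]]$.

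The coding is then done once, after the Prikry forcing and above $\aleph_\omega$. Start from a ground model of $V=\mathrm{gHOD}$; plain $V=\HOD$ does not guarantee $V\subseteq\HOD$ of the extension. Then force with the cone homogeneous $\mathrm{Code}(x,\aleph_\omega)$ for some $x$ with $V[x]=V[\pi[G]]$. This gives $\HOD=V[\pi[G]]$ in the final model, and no coding ``inside the Prikry iteration'' is needed.

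\textbf{Optimality.} You have the right ingredients, namely Dodd--Jensen covering and $K\subseteq\HOD$, but the gluing step is not yet an argument. What must be covered is the countable set $R$ of indices of the pieces $x\cap\aleph_n$ in the canonical well-order of $\HOD$, not the pieces themselves. A cover $S\in K$ of $R$ of size $\aleph_1$ has order type below $\aleph_2$. Its transitive collapse lies in $\HOD$ and maps $R$ to a subset of an ordinal below $\aleph_\omega=\Delta(\HOD,V)$. Hence $R\in\HOD$, and $x$ is ordinal definable from $R$. Your appeal to $K$ computing $\mathcal{P}(\aleph_\omega)$ ``correctly enough'' does not substitute for this step.
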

For singular cardinals of uncountable cofinality, the situation is far more restrictive:
ZFC proves a compactness result in the spirit of Silver's theorem. More precisely, we have
the following:
 
\begin{theorem}\label{thm: compactness}
 Suppose that $\kappa$ is a singular cardinal with $\cf(\kappa)\geq \omega_1$ such that $2^{\cf(\kappa)}<\kappa$. Then, $\Delta(\HOD, V)\neq \kappa$.
\end{theorem}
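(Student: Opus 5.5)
The plan is a Silver-style argument: reflect the membership of $A$ in $\HOD$ down to its initial segments, and then pin $A$ down by ordinal parameters using a pressing-down argument on a stationary subset of $\cf(\kappa)$. Let $\mu=\cf(\kappa)\geq\omega_1$, and suppose toward a contradiction that $\mathcal{P}(\lambda)\subseteq\HOD$ for every $\lambda<\kappa$ but some $A\subseteq\kappa$ is not in $\HOD$.

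\emph{Step 1 (a skeleton in $\HOD$).} Let $\nu=\cf^{\HOD}(\kappa)$ and fix a club $C\subseteq\kappa$ of order type $\nu$ with $C\in\HOD$, say $C=\{\kappa_\xi:\xi<\nu\}$. Since $\nu<\kappa$, we have $\mathcal{P}(\nu)\subseteq\HOD$. Hence a club $D\subseteq\nu$ of order type $\mu$ lies in $\HOD$, and so does $\mathcal{P}(\mu)$. Re-indexing along $D$ gives an increasing continuous sequence $\langle\kappa_i:i<\mu\rangle\in\HOD$ cofinal in $\kappa$. Because $2^\mu<\kappa$, I may assume $\kappa_0>2^\mu$. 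Every $A\cap\kappa_i$ is in $\HOD$ by hypothesis. So is each proper initial segment $\langle A\cap\kappa_j:j<i\rangle$, because it is definable from $A\cap\kappa_i$ and the skeleton.

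\emph{Step 2 (the tree and the ordinal code).} In $\HOD$, let $T$ be the tree of coherent sequences $\langle t_j:j<i\rangle$ with $t_j\subseteq\kappa_j$ and $t_j=t_k\cap\kappa_j$ for $j<k$. Then $A$ determines a cofinal branch $b_A$ of length $\mu$, and every proper initial segment of $b_A$ lies in $T$. Using the canonical definable well-order of $\HOD$, send each node to its rank. This gives a function $g_A:\mu\to\mathrm{Ord}$, where $g_A(i)$ is the rank of $A\cap\kappa_i$. The map $B\mapsto g_B$ is $\OD$, and $A\in\HOD$ as soon as $g_A\in\HOD$. Distinct branches give functions that disagree on a tail, so disagree on a club of $\mu$.

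\emph{Step 3 (Silver-type compression; the main obstacle).} I must show that $g_A$ is definable from ordinals, even though $\HOD$ need not be closed under $\mu$-sequences of ordinals. I would run Silver's stationary-set argument in the following form.

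\begin{enumerate}
\item For each limit $i<\mu$, the node $A\cap\kappa_i$ is definable from $\langle A\cap\kappa_j:j<i\rangle$ together with the skeleton. Coherence then yields a regressive "which earlier node witnesses it" function, once I code the finite-length ordinal definitions of the nodes.
\item Fodor's lemma, which needs $\mu\geq\omega_1$, then gives a stationary $S\subseteq\mu$ and a fixed datum $\delta$ describing uniformly how $A\cap\kappa_i$ is obtained from earlier data for $i\in S$.
\item The pair $(S,\delta)$ is a bounded object. Since $S\in\mathcal{P}(\mu)\subseteq\HOD$ and $2^\mu<\kappa_0$, it is coded by a subset of $\kappa_0$, which is in $\HOD$.
\item $A$ should then be recovered as the unique $B$ whose branch follows the $\delta$-prescription on $S$. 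Since $S$ is unbounded and branches are coherent, this $B$ is $\OD$ from $(S,\delta)$, $C$, and ordinals, so $A\in\HOD$. This contradicts the choice of $A$.
\end{enumerate}

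The genuine difficulty is making the regressive function in Step 3 honestly regressive. The ordinals $g_A(i)$ can be as large as $(2^{\kappa_i})^{+}$, which may exceed $\kappa$, so they cannot simply be pressed down. What I would try first is to replace $g_A(i)$ by the \emph{relative} index of $A\cap\kappa_i$ among the $\HOD$-successors in $T$ of the node $\langle A\cap\kappa_j : j<i\rangle$. I would also use continuity at limit $i$: at a limit $i$, the node $A\cap\kappa_i$ is the union of its predecessors, so the new information enters only at successor steps. The aim is to make the datum at limits depend only on information at an earlier index. If this fails, the fallback is the Galvin--Hajnal style bound. That is, I would count, inside $\HOD$, the branches that agree with $A$ on a stationary set, show there are fewer than $\kappa$ of them, and conclude that $A$ is a definable member of a small, well-ordered $\OD$ set.
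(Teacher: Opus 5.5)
Your Step 1 asserts $\nu=\cf^{\HOD}(\kappa)<\kappa$ without justification, and this is the one genuinely nontrivial input. Singularity in $V$ does not transfer to $\HOD$ in general; for example, homogeneous Prikry-type forcing leaves $\kappa$ measurable in $\HOD$. If $\kappa$ were regular in $\HOD$, there would be no skeleton in $\HOD$ at all. The rest of your argument would then only give $\mathcal{P}(\kappa)\subseteq\HOD_{\{x\}}$ for some $x$ coding a $V$-skeleton, which is Shelah's theorem rather than the statement. The paper closes this gap with a compactness theorem. Since $\mathcal{P}(\lambda)\subseteq\HOD$ for $\lambda<\kappa$, every cardinal $\lambda<\kappa$ of countable cofinality is singular in $\HOD$, and these form a stationary subset of $\kappa$. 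Goldberg--Poveda \cite[Theorem~3.5]{GolPov} then gives $\cf^{\HOD}(\kappa)<\kappa$. Only after this does your observation (a $V$-club of order type $\mu$ in $\nu$ lies in $\HOD$) give $\cf^{\HOD}(\kappa)=\mu$ and a skeleton of length $\mu$ in $\HOD$.

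Step 3 is also not a proof, and its main version cannot work. By continuity, at a limit $i$ the node $A\cap\kappa_i$ is just $\bigcup_{j<i}A\cap\kappa_j$. So whatever datum Fodor presses down is shared by every branch and cannot single out $A$. Meanwhile, the ordinal parameters defining the nodes are not bounded below $i$ in any useful sense. Your fallback, as stated, is trivial: distinct branches disagree on a tail, so only $A$ agrees with $A$ on a stationary set.

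The missing idea is to stratify $\mathcal{P}(\kappa)$ by eventual domination: $X\lhd Y$ iff $g_X(i)<g_Y(i)$ for a tail of $i<\mu$. This order is $\OD$, and it is well-founded because $\mu\geq\omega_1$. Work in $V$ here, not inside $\HOD$, since the branches are not in $\HOD$. By Erd\H{o}s--Rado, each rank level $R_\rho$ has size at most $2^\mu<\kappa$: color a pair $\alpha<\beta$ by the least $i$ with $g_{X_\alpha}(i)>g_{X_\beta}(i)$.

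A small $\OD$ set need not have $\OD$ members, so you still need an $\OD$ well-order of each $R_\rho$. The paper gets it as follows. Collapse $A_\rho=\bigcup_{X\in R_\rho}\mathrm{ran}(g_X)$, a set of size $<\kappa$, onto its order type via $\pi_\rho$. Then $\pi_\rho\circ g_X\in H_\kappa=H^{\HOD}_\kappa$ (using $\Delta(\HOD,V)=\kappa$). Its index in a fixed $\HOD$-enumeration of $H_\kappa$ determines $X$ within $R_\rho$. Ordering first by rank and then by this index gives an $\OD$ well-order of $\mathcal{P}(\kappa)$, so every subset of $\kappa$ lies in $\HOD$. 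This is also where $2^\mu<\kappa$ is really used; in your Step 3 it plays no essential role.
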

The same argument provides a variant of a theorem of Shelah  \cite{ShelahHOD} giving the existence of an $\mathrm{OD}_x$ well-ordering of $\mathcal{P}(\kappa)$ for a certain $x\s \kappa$:
\begin{theorem}\label{thm: shelah variant}
 Let $\kappa$ be a singular cardinal with $\cf(\kappa)\geq \omega_1$ such that $2^{\cf(\kappa)}<\kappa$. Suppose that there is an $\mathrm{OD}$ well-ordering of $\mathcal{P}(\alpha)$ for all $\alpha < \kappa$. Then, there is an $\mathrm{OD}$ well-ordering of $\mathcal{P}(\kappa)$.    
\end{theorem}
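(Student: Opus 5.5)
We prove Theorem~\ref{thm: shelah variant} with a Silver/Shelah-style ``almost disjoint coding'' argument along a club of length $\mu:=\cf(\kappa)$. The idea is to code each $A\subseteq\kappa$ by its sequence of initial segments, compare such codes via the OD well-orderings of the smaller power sets, and use stationarity to turn those comparisons into a definable well-order.

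\textbf{Setup and coding.} Fix an increasing continuous sequence $\langle \kappa_i : i<\mu\rangle$ cofinal in $\kappa$. We want it OD, so we take the canonical one: $\kappa_i$ is the $i$-th cardinal in the club of limit cardinals below $\kappa$ enumerated in some definable fashion, which is definable from $\kappa$ and $\mu$. For $\alpha<\kappa$ let $\prec_\alpha$ be the $<_{\OD}$-least OD well-ordering of $\mathcal{P}(\alpha)$. The least OD well-ordering, in the canonical well-ordering of OD sets, is OD uniformly in $\alpha$, so $\langle\prec_\alpha:\alpha<\kappa\rangle$ is OD. For $A\subseteq\kappa$ define $f_A\in\prod_{i<\mu}\mathrm{ot}(\mathcal{P}(\kappa_i),\prec_{\kappa_i})$ by letting $f_A(i)$ be the $\prec_{\kappa_i}$-rank of $A\cap\kappa_i$. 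The map $A\mapsto f_A$ is injective and OD. It therefore suffices to OD well-order the set $F=\{f_A : A\subseteq\kappa\}$, a set of functions from $\mu$ to ordinals.

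\textbf{Well-ordering $F$.} The naive order ``$f<g$ iff $f<g$ on a stationary set'' is not linear. Instead we use Silver's ranking: for each stationary $S\subseteq\mu$, $f<_{NS\restriction S} g$ is well-founded since $\mu\ge\omega_1$ (Fodor). This gives an OD rank function $\|f\|_S$. Since $2^\mu<\kappa$, there are fewer than $\kappa$ stationary subsets $S$, and by hypothesis $\mathcal{P}(\mu)$ carries an OD well-order $\prec_\mu$. Now assign to $f$ the function $S\mapsto\|f\|_S$ and compare these functions lexicographically along $\prec_\mu$, with ties broken as follows. Two distinct $f_A,f_B$ differ from some $i_0$ on, because initial segments cohere: if $A\cap\kappa_i\ne B\cap\kappa_i$, the same holds for all $j>i$.

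\textbf{Main obstacle.} The hard part is showing that this invariant is injective on $F$, since distinct $A,B$ could a priori get the same ranks for all $S$. Here we use the tree structure of $F$. The set $\{i : f_A(i)<f_B(i)\}$ together with its complement partitions a tail of $\mu$, so one of the two sets is stationary, say it is $S$. Then $f_A<_{NS\restriction S}f_B$, which gives $\|f_A\|_S<\|f_B\|_S$.

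To make this rigorous I would have to verify that $\|\cdot\|_S$ is strictly monotone for the ideal order; this follows from the definition of the rank. Then the map $A\mapsto\langle\|f_A\|_S : S\in\mathcal{P}(\mu)\ \text{stationary}\rangle$ is injective and OD. Ordering its values lexicographically via $\prec_\mu$ on the index set gives an OD well-order of $\mathcal{P}(\kappa)$.

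Finally, this argument uses no choice-dependent parameters beyond $\kappa$, which is why the same proof yields Theorem~\ref{thm: compactness}. There, HOD containing every $\mathcal{P}(\alpha)$ for $\alpha<\kappa$ gives the OD well-orders, and the injection places every $A\subseteq\kappa$ into HOD.
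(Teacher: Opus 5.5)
Your injectivity argument is correct: $f_A\neq f_B$ at every coordinate on a tail of $\mu$, so one of $\{f_A<f_B\}$, $\{f_A>f_B\}$ is stationary, and ranks are strictly monotone. The final step, however, fails.

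\textbf{The lexicographic order is not a well-order.} The lexicographic order on functions from an infinite well-ordered index set to the ordinals is linear but not well-founded. For instance, in ${}^{\omega}2$ the characteristic functions of $\{0\},\{1\},\{2\},\dots$ strictly decrease. You give no reason why the vectors $\langle\|f_A\|_S : S\ \text{stationary}\rangle$ avoid such chains, and nothing about rank vectors obviously does. Suppose disjoint stationary sets $T_0,T_1,\dots$ are the first elements of $\prec_\mu$. Then vectors with value $0$ at $T_m$ for $m<n$ and value $1$ at $T_m$ for $m\ge n$ decrease lexicographically in $n$. Already for Galvin--Hajnal norms, such vectors come from functions that differ at every coordinate.

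\textbf{What you actually get.} You obtain only an OD linear order. It neither well-orders $\mathcal P(\kappa)$ nor shows that each $A$ is OD. The injection does not put $A$ into $\HOD$ either, since its value is a sequence of $2^\mu$ ordinals, in general far above $\kappa$, and such a sequence need not be OD. A warning sign is that $2^{\cf(\kappa)}<\kappa$ does no real work in your argument. If the argument went through, it would remove a hypothesis whose necessity the paper leaves open.

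\textbf{The missing idea.} Use a \emph{single} well-founded order, as the paper does with eventual domination on $\mu$ (your $<_{\mathrm{NS}}$ would also do), and bound the size of its rank levels.
Distinct members of a level $R_\rho$ are incomparable. So colour a pair $\alpha<\beta$ by the least $i$ with $f_{X_\alpha}(i)>f_{X_\beta}(i)$ and apply Erd\H{o}s--Rado; a homogeneous set would give a decreasing sequence of ordinals at one coordinate, so $|R_\rho|\le 2^{\mu}<\kappa$.
Hence $A_\rho=\bigcup_{X\in R_\rho}\mathrm{ran}(f_X)$ has size ${<}\kappa$, and its collapse $g_\rho$ is definable from $\rho$.
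Each $g_\rho\circ f_X$ lies in $H_\kappa$, which inherits an OD well-order from the hypothesis.
Ordering first by rank and then by the position of $g_\rho\circ f_X$ gives the required OD well-order. This collapse into $H_\kappa$ is exactly where $2^{\cf(\kappa)}<\kappa$ is used.

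\textbf{A second gap at the start.} You need an OD cofinal sequence $\langle\kappa_i : i<\mu\rangle$, and ``the $i$-th limit cardinal'' does not give one: when $\kappa=\aleph_\kappa$, the limit cardinals below $\kappa$ have order type $\kappa$. Such a sequence exists iff $\cf^{\HOD}(\kappa)\le\mu$, and this is not automatic. After Prikry forcing over a model of $V=\mathrm{gHOD}$, every bounded subset of $\kappa$ lies in $\HOD$, yet $\kappa$ is measurable in $\HOD$ while $\cf(\kappa)=\omega$. The paper handles this by observing that the hypothesis gives $\mathcal P(\alpha)\subseteq\HOD$ for all $\alpha<\kappa$. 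It then applies its spin-off of the Goldberg--Poveda compactness theorem, $\cf^{\HOD}(\kappa)=\cf(\kappa)$, which genuinely needs uncountable cofinality. For $\kappa=\aleph_\lambda$ with $\lambda<\kappa$ you could take the least club of order type $\mu$ in $\lambda$ under the OD well-order of $\mathcal P(\lambda)$, but the fixed-point case requires the theorem.
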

The key to the above results is the following spin-off of a theorem of Goldberg--Poveda from \cite{GolPov}:
\begin{theorem}\label{thm: spinoffGP}
 Suppose that $\kappa$ is a singular cardinal of uncountable cofinality such that $\cf(\lambda)=\cf^{\HOD}(\lambda)$ for all $\lambda < \kappa$. Then, $\cf(\kappa)=\cf^{\HOD}(\kappa)$.   
\end{theorem}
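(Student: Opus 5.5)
The plan is to show $\nu:=\cf^{\HOD}(\kappa)$ equals $\mu:=\cf(\kappa)$. Since $\HOD\subseteq V$, any cofinal sequence in $\HOD$ is cofinal in $V$, so $\nu\geq\mu$ always. Assume towards a contradiction that $\nu>\mu$. The argument has two steps: first show $\nu<\kappa$, i.e.\ $\kappa$ is singular in $\HOD$, and then reflect the cofinality of $\kappa$ down to the ordinal $\nu<\kappa$, where the hypothesis applies.

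\emph{Step 1: $\kappa$ is singular in $\HOD$.} Fix in $V$ a club $C\subseteq\kappa$ of order type $\mu$ consisting of singular cardinals; this is possible since $\kappa$ is a singular limit cardinal with $\mu\ge\omega_1$. For every $\lambda\in C$ we have $\cf(\lambda)<\lambda$. By hypothesis $\cf^{\HOD}(\lambda)=\cf(\lambda)<\lambda$. Hence the set $\{\lambda<\kappa\mid \cf^{\HOD}(\lambda)<\lambda\}$ contains a club, and in particular it is stationary in $\kappa$. The Goldberg--Poveda compactness theorem from \cite{GolPov} quoted in the introduction then applies: $\kappa$ is a strong limit? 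The theorem as stated there assumes strong limit singular of uncountable cofinality. If that hypothesis really is needed, I would instead re-run its proof, which I expect only uses stationarity together with $\cf(\kappa)\ge\omega_1$. Either way it yields $\cf^{\HOD}(\kappa)<\kappa$, so $\nu<\kappa$. I expect this step to be the main obstacle: it is exactly where the Silver-style compactness enters, and I must verify that the cited argument does not depend on the strong-limit assumption.

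\emph{Step 2: contradiction.} In $\HOD$, choose a club $D\subseteq\kappa$ of order type $\nu$ with continuous increasing enumeration $d:\nu\to\kappa$, where $d\in\HOD$. Continuity is absolute, so $D$ is also closed in $V$, and therefore $C\cap D$ is a club in $\kappa$ in $V$. Let $E=d^{-1}[C\cap D]\subseteq\nu$. Since $d$ is continuous and increasing, $E$ is a club in $\nu$ (in $V$), and its order type is the order type of $C\cap D$, namely $\mu$. Thus $\cf(\nu)=\mu$ in $V$. But $\nu<\kappa$, so the hypothesis gives $\cf^{\HOD}(\nu)=\cf(\nu)=\mu<\nu$. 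This contradicts the fact that $\nu$, being a $\HOD$-cofinality, is regular in $\HOD$. Hence $\nu=\mu$.
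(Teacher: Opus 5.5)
Your proof is correct and is essentially the paper's argument. The paper also first gets $\cf^{\HOD}(\kappa)<\kappa$ from \cite[Theorem~3.5]{GolPov}, using the stationary set $\kappa\cap\cof(\omega)$ of $\HOD$-singular points where you use a club of singular cardinals, and then shows $\cf(\delta)=\cf(\kappa)$ for $\delta=\cf^{\HOD}(\kappa)$ via an increasing cofinal map in $\HOD$, exactly as in your Step~2. Your worry about the strong-limit hypothesis is unnecessary: the paper applies that theorem assuming only that $\kappa$ is singular of uncountable cofinality, and remarks in its final section that this is all \cite[Theorem~3.5]{GolPov} requires.
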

 
In the second part of this paper we  investigate compactness phenomena related to the cover property of the pair $(\HOD, V)$. Given a cardinal $\kappa$, we say that \emph{$(\HOD, V)$ has the cover property at $\kappa$} if for  every set $X \s \kappa$ there is $Y\in \HOD$ such that $X\s Y$ and $|X|=|Y|$. We are interested in understanding the first cardinal $\kappa$ for which the pair $(\HOD, V)$ fails to satisfy the cover property at $\kappa$. {For regular cardinals $\kappa$ it is not hard to arrange situations where $\kappa$ is the first place where $(\HOD, V)$ fails to have the cover property at $\kappa$.} The same comment applies to singular cardinals of countable cofinality utilizing the standard Prikry forcing.  However, the picture with singular cardinals of uncountable cofinality is much more subtle. We show  that a singular strong limit cardinal $\kappa$ of uncountable cofinality cannot be the first place where covering holds, provided $\HOD$ satisfies the GCH on a stationary set on $\kappa$ and $\HOD$ computes cardinals ${\leq}\kappa^+$ correctly:
 
\begin{theorem}\label{cor: compactness of covering}
Let $\mu\geq \omega_1$ be a regular cardinal and   $\kappa$  a singular of  cofinality $\mu$, and suppose that  $\HOD$ satisfy  the following:
    \begin{enumerate}
        \item $\HOD$ is correct about cardinals $\leq \kappa^+$.
        \item The $\mathrm{GCH}$ holds in $\HOD$.
    \end{enumerate}
    Then, $\kappa$ cannot be the first cardinal $\lambda$ for which $(M, V)$ fails to have the $\mu^+$-cover property at $\lambda$.
\end{theorem}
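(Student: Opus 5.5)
We argue by contradiction. Suppose that $(\HOD,V)$ has the $\mu^+$-cover property at every cardinal $\lambda<\kappa$, meaning that every $X\s\lambda$ with $|X|\leq\mu$ is contained in some $Y\in\HOD$ with $|Y|\leq\mu$. We want to cover an arbitrary $X\s\kappa$ of size $\leq\mu$ by a set in $\HOD$ of size $\leq\mu$. The strategy is to reduce the problem to a single club $C\s\kappa$ of order type $\mu$ with $C\in\HOD$. Then we cover each $X\cap\gamma$ for $\gamma\in C$ using the inductive hypothesis, and glue the covers together inside $\HOD$. Cardinal correctness (1) and $\mathrm{GCH}^{\HOD}$ (2) will control the sizes of the glued object.

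\textbf{Step 1: $\cf^{\HOD}(\kappa)=\mu$.} Every $\lambda<\kappa$ satisfies $\cf(\lambda)=\cf^{\HOD}(\lambda)$. To see this, apply covering below $\kappa$ to a cofinal subset of $\lambda$ of size $\cf(\lambda)$ when $\cf(\lambda)\leq\mu$. When $\cf(\lambda)>\mu$, use cardinal correctness together with covering at smaller sizes, or observe that regular cardinals of $V$ below $\kappa^+$ stay regular in $\HOD$ by (1). Theorem~\ref{thm: spinoffGP} then gives $\cf^{\HOD}(\kappa)=\mu$. Fix a club $C=\{\gamma_i: i<\mu\}\in\HOD$ in $\kappa$ of order type $\mu$.

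\textbf{Step 2: gluing.} Given $X\s\kappa$ with $|X|\le\mu$, choose for each $i$ a set $Y_i\in\HOD$ with $X\cap\gamma_i\s Y_i\s\gamma_i$ and $|Y_i|\le\mu$. The difficulty is that the sequence $\langle Y_i\rangle$ need not lie in $\HOD$. Use (2): in $\HOD$, $\gamma_i^{\mu}$ has size $\le\gamma_i^{+}<\kappa$, computed via $\mathrm{GCH}$ and correctness of cardinals. So $\mathcal{A}_i=([\gamma_i]^{\le\mu})^{\HOD}$ has a definable enumeration in $\HOD$ of length $\theta_i<\kappa$, uniformly in $i$. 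Each $Y_i$ then has an index $\xi_i<\theta_i$. The set $\{\langle i,\xi_i\rangle : i<\mu\}$ is a subset of $\mu\times\sup_i\theta_i$, which codes into a subset of $\kappa$ of size $\mu$. This still points to $\kappa$ itself, so an extra idea is needed.

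\textbf{Step 3: main obstacle and the fix.} The obstacle is exactly this circularity. I would fix it with a stationary-set pressing-down argument in the spirit of Silver's theorem. For each limit $i$ of uncountable cofinality, apply covering at $\gamma_i$ to the set $\{\xi_j: j<i\}\s\theta_i$, or more precisely to its image under $\HOD$-coded bijections $\theta_j\to\gamma_j$. This yields a cover $Z_i\in\HOD$ of size $\mu$. Such a $Z_i$ is an element of some $\mathcal{A}_{f(i)}$ with $f(i)<i$, after intersecting with $C$-bounded pieces. By Fodor's lemma in $V$, which applies since $\mu\geq\omega_1$, $f$ is constant with value $i^*$ on a stationary set $S$. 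Moreover, $Z_i$ ranges over at most $|\mathcal{A}_{i^*}|<\kappa$ many possibilities, each of size $\le\mu$. Shrinking $S$ further, a single $Z\in\HOD$ of size $\mu$ works for unboundedly many $i$. Taking $Y=\bigcup\{A: A\text{ coded by some element of }Z\}$ in $\HOD$, together with the correct cardinal arithmetic, gives a $Y\in\HOD$ with $|Y|\le\mu\cdot\mu=\mu$ covering $X$.

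The delicate point is arranging the coding so that the regressive function really lands below $i$ and the number of candidate $Z$'s is small enough to stabilize. I expect this is where hypothesis (2), and correctness of $\kappa^+$ in bounding $|\mathcal{A}_i|$, are essentially used.
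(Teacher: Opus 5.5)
There is a genuine gap in Step 3, and it is the central difficulty of the theorem. The index $\xi_i$ of your cover $Y_i$ of $X\cap\gamma_i$ ranges over $\theta_i=\gamma_i^{+}$, not over $\gamma_i$. Nothing in your argument produces a function in $\HOD$ bounding $i\mapsto\xi_i$. Without such a bound there is no $\HOD$-uniform way to send $\xi_i$ below $\gamma_i$, so there is nothing to press down.

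Your substitute does not supply one. Coding $\{\xi_j\mid j<i\}$ via injections $\gamma_j^+\to\gamma_{j+1}$ gives a set that is in general cofinal in $\gamma_i$. So its cover $Z_i$ need not lie inside any $\gamma_{f(i)}$ with $f(i)<i$, and there is no regressive $f$. Even granting a constant value $i^*$ on a stationary $S\s\mu$, the stabilization step is a pigeonhole of $|S|=\mu$ indices into $|\mathcal{A}_{i^*}|=\gamma_{i^*}^+>\mu$ candidates, which yields nothing. (Also, for $\mu=\omega_1$ there are no limits $i<\mu$ of uncountable cofinality.) Covering the initial segments $X\cap\gamma_i$ is something you already have. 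The whole problem is to get a single $\HOD$ object controlling all $\mu$ indices at once.

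The missing idea is Magidor's scale argument, which the paper uses via the Local Magidor Covering proposition. Inside $\HOD$, using $\cf^{\HOD}(\kappa)=\mu>\omega$ and the proof of Shelah's representation theorem, one builds a scale $\langle f_\xi\mid \xi<\kappa^{+}\rangle$ in $(\prod_{i<\mu}\kappa_i^+)^{\HOD}$ with good points. Via $(\star)_\theta$ and an exact upper bound, its restriction to a sub-club $\langle \kappa_{i_j}\mid j<\mu\rangle$ is cofinal in $\prod_j\kappa_{i_j}^+$ as computed in $V$. This is where correctness of $\kappa^+$ is genuinely used, since the $\HOD$-scale has length $\kappa^{+\HOD}=\kappa^+$. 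It is not needed for bounding $|\mathcal{A}_i|$, which only uses correctness below $\kappa$.

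With the scale in hand, your index function $F(j)=\xi_j$ is eventually dominated by some $f_\xi\in\HOD$, and a sequence of injections $\langle g_i\colon f_\xi(i)\to\kappa_i\mid i<\mu\rangle$ lies in $\HOD$. Then $g_{i_j}(F(j))<\kappa_{i_j}$, so for limit $j$ this value lies below some $\kappa_{i_l}$ with $l<j$. Fodor on $\mu\cap\cof(\omega)$ gives $j^*$ and a stationary set on which all these values are below $\kappa_{i_{j^*}}$. One application of covering at $\kappa_{i_{j^*}}$ gives $U\in\HOD$, and $Y=\bigcup\{Z^j_\eta\mid j<\mu,\ \eta\in g_{i_j}^{-1}[U]\}$ works; no stabilization of covers is needed.

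Step 1 also needs repair. $\mu^+$-covering does not give $\cf(\lambda)=\cf^{\HOD}(\lambda)$ for all $\lambda<\kappa$. For example, Prikry-singularize a measurable $\delta<\mu$ over a model of $V=\mathrm{gHOD}$: the forcing has size $\le\mu$, so $\mu^+$-covering survives everywhere, yet $\delta$ stays regular in $\HOD$. So Theorem~\ref{thm: spinoffGP} does not apply as stated. Argue instead as the paper does. Ordinals in $(\mu,\kappa)$ of countable cofinality are $\HOD$-singular by covering, so \cite[Theorem~3.5]{GolPov} gives $\delta=\cf^{\HOD}(\kappa)<\kappa$. Then $\cf(\delta)=\mu$, and covering at $\delta$ gives $\delta\le\mu$.
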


The proof of the above theorem builds upon ideas of Magidor,  who established the following elegant result: If $M$ is a cofinality--correct inner model, $M\models \mathrm{GCH}$ and $(M,V)$ has the $\omega_1$-cover property then $(M, V)$ has the full cover property (\cite[Theorem~2.33]{AbrahamMagidor}).  In his Ph.D. dissertation \cite{SebaPhD}, S. Thei observed that Shelah's weakening of GCH known as the \emph{Singular Strong Hypothesis} (SSH)\footnote{SSH is the postulate asserting that $``\mathrm{pp}(\kappa)=\kappa^+$ holds for all singular cardinals $\kappa$", where $\mathrm{pp}(\kappa)$ denotes the \emph{pseudopower of $\kappa$}. Since we will not make use of pseudopowers here, we refrain from defining this notion and refer the reader to Shelah's book \cite{ShelahBook} for the relevant background.} suffices to establish the conclusion of Magidor's  covering theorem.  In light of the above, it is natural to ask the  extent to which  SSH assumption  is necessary.  We show that this is the case, by proving the following consistency result:
 
\begin{theorem}\label{thm:optimality of Magidor}
Assuming the existence of appropriate large cardinals, the $\mathrm{GCH}$ assumption cannot be removed from Magidor's covering theorem. 
\end{theorem}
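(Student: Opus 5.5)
We plan to prove Theorem~\ref{thm:optimality of Magidor} by building a pair $M\subseteq V$ where $M$ is a cofinality-correct inner model of $V$, $(M,V)$ has the $\omega_1$-cover property, and yet $(M,V)$ fails the full cover property. By Magidor's theorem (\cite[Theorem~2.33]{AbrahamMagidor}) and Thei's refinement, such an $M$ must violate $\mathrm{SSH}$. So the first step is to prepare $M$ with a singular cardinal $\kappa$ of countable cofinality such that $\mathrm{pp}^M(\kappa)>\kappa^+$. Concretely, we would start from a model with $o(\kappa)$ large enough, say $\kappa^{++}$, or a $(\kappa,\kappa^{++})$-extender. Using a Gitik--Magidor extender-based Prikry forcing, we arrange in $M$ that $\kappa$ is strong limit of cofinality $\omega$ with $2^\kappa=\kappa^{++}$. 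We also require that $M$ carries a scale $\langle f_\xi\mid \xi<\kappa^{++}\rangle$ in some $\prod_n\kappa_n/\mathrm{fin}$, so that $\mathrm{tcf}=\kappa^{++}$. We additionally retain in $M$ enough of the extender structure, for instance in the form of a short-extender sequence, to run a second Prikry-type forcing over $M$.

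The second step is to force over $M$ with a Gitik-style short extender forcing $\mathbb{P}$ to obtain $V=M[H]$. The forcing $\mathbb{P}$ should preserve all cardinals and cofinalities, which gives cofinality-correctness of $M$ in $V$. It should also add no new bounded subsets of $\kappa$ and add no new countable sets of ordinals beyond those generated from ground-model countable sets. Such forcings add an $\omega$-sequence $\langle \rho_n\mid n<\omega\rangle$ whose coordinates are read off from generic Prikry points of the extenders. The key feature we want is that the generic sequence is an exact upper bound, or dominates, a set $A\subseteq\kappa^{++}$ of scale indices that lies in no small set of $M$. The intended witness to the failure of covering is then $X=\{\xi<\kappa^{++}\mid f_\xi<^* \vec\rho\}$ or a suitable subset of it. We aim to show that $|X|^V=\kappa^+$ and that no $Y\in M$ with $X\subseteq Y$ has $|Y|=\kappa^+$. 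The argument for the second claim is this. If such a $Y$ existed, then $\sup_{\xi\in Y} f_\xi$ would give an $M$-function whose values are bounded below $\kappa^{++}$ in the pcf sense. That $M$-function would then bound $\vec\rho$ modulo finite, contradicting the genericity and density arguments for $\mathbb{P}$.

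The third step is to verify the $\omega_1$-cover property. For this we use the Prikry property of $\mathbb{P}$ together with the fact that each condition decides any name for an ordinal up to a countable, indeed finite-per-level, set in $M$. Given a countable $x\subseteq\mathrm{Ord}$ in $V$, we fix a name for it and use a fusion argument along the $\omega$ levels of the extender forcing to find a countable $Y\in M$ covering $x$. This is where the strong Prikry property and $\kappa^+$-closure of the direct extension order are used.

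The main obstacle is reconciling these requirements. The forcing must add an $\omega$-sequence that is unbounded in the pcf sense, so that covering fails at size $\kappa^+$, while still every countable set is covered by a countable ground-model set and no cardinals collapse. Ordinary Prikry forcing fails this, since the Prikry sequence is itself a countable set not covered by a countable ground set. We would therefore first try Gitik's forcing for adding a new $\omega$-sequence without changing cofinalities, relative to an $M$ that already has $\mathrm{cf}(\kappa)=\omega$. In that setting the new sequence is covered level-by-level by countable $M$-sets, namely the products of finite sets of candidates. If needed, we would finally code $M$ into $V$ so that $M=\HOD^V$, making the example fit the $(\HOD,V)$ framework of the paper.
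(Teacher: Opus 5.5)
\textbf{There is a genuine gap: in the configuration you describe, covering cannot fail where you need it to, so no choice of witness will work.}

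Under your own requirements ($M$ cofinality-correct, $(M,V)$ with the $\omega_1$-cover property, no new bounded subsets of $\kappa$), covering at $\kappa$ is automatic when $\cf(\kappa)=\omega$. Given $X\subseteq\kappa$ with $|X|=\nu<\kappa$, each piece $X\cap\kappa_n$ lies in $M$ and has $M$-size at most $\nu$. Code these pieces by ordinals via an enumeration in $M$, and cover the countably many codes by a countable $C\in M$. The union of the coded sets of size at most $\nu$ with codes in $C$ is then an $M$-cover of $X$ of size $\nu$.

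More generally, a routine induction shows that the least cardinal at which covering fails must be \emph{singular of uncountable cofinality}. Regular cardinals are handled by boundedness and cardinal-correctness, and cofinality $\omega$ by the $\omega_1$-covering argument just described. Uncountable cofinality is precisely the case in which Magidor's proof invokes GCH, through the scale and the enumeration $\langle Z^j_\eta\mid \eta<\kappa_{i_j}^+\rangle$. In your model no such cardinal is in play. Nothing new is added below $\kappa$, $\kappa$ has countable cofinality, and $\kappa^+,\kappa^{++}$ are regular. For a forcing of size below $\kappa^{+\omega_1}$, such as the short-extender forcings you have in mind, the chain condition reduces every larger instance to these.

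Your concrete witness also fails for a more elementary reason. Since the scale is $<^*$-increasing, $X=\{\xi<\kappa^{++}\mid f_\xi<^*\vec\rho\}$ is downward closed, hence an ordinal, hence in $M$. In any case, every subset of the regular cardinal $\kappa^{++}$ of size $\kappa^+$ is bounded, so it is covered by an ordinal of size $\kappa^+$. The argument via $\sup_{\xi\in Y}f_\xi$ therefore cannot produce a contradiction.

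\textbf{What is missing} is that the failure must be placed at a cardinal of uncountable cofinality in both models, with the inner model failing GCH stationarily often below it. The paper does this inside a single generic extension.

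\begin{enumerate}
\item Merimovich's extender-based Radin forcing, with a Mitchell-increasing sequence of $\omega_1$ many $(\kappa,\kappa^{++})$-extenders, yields $W=V[G]$ with $\cf(\kappa)=\omega_1$, $2^\kappa=\kappa^{++}$, and $2^\alpha=\alpha^{++}$ on a club of $\alpha<\kappa$.
\item The inner model is $V[G\cap M]$, where $M\prec H_\Theta$ satisfies $|M|=\kappa^+$, ${}^\kappa M\subseteq M$ and $M\cap\kappa^{++}=\chi$.
\item A residue map makes $G\cap M$ generic for $\mathbb{P}(\langle E_\alpha\restriction\chi\mid\alpha<\omega_1\rangle)$, and this forcing gives $2^\kappa=\kappa^+$.
\item Factoring shows the two models share all bounded subsets of $\kappa$.
\item Properness, together with the fact that $\omega$-sequences into $\kappa$ are bounded when $\cf(\kappa)=\omega_1$, shows they also share all $\omega$-sequences of ordinals.
\item The projection to ordinary Radin forcing gives cofinality-correctness.
\item Finally, the counting Lemma~\ref{lemma: cover} converts $(2^\kappa)^{V[G\cap M]}=\kappa^+<\kappa^{++}=(2^\kappa)^{V[G]}$ into a failure of $\omega_2$-covering at $\kappa$.
\end{enumerate}

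Note that the inner model violates GCH \emph{below} $\kappa$ while satisfying $2^\kappa=\kappa^+$. This is the opposite of your plan of building $\mathrm{pp}(\kappa)>\kappa^+$ into $M$ at a cardinal of countable cofinality.
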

 The precise statement of this theorem is rather technical -- it can be found in Theorem~\ref{thm:optimality of Magidor}. Indeed, the theorem provides an optimal failure of Magidor's theorem by supplying a pair of inner models $(M,V)$ that agree up to a singular cardinal $\kappa$ of uncountable cofinality but $(M,V)$ fails to have the $\cf(\kappa)^+$-cover property at $\kappa$.
 
\smallskip
 
The structure of the paper is as follows. In Section~\ref{sec: preliminaries} we provide a few preliminaries. In Section~\ref{sec: compactness HOD} we discuss the first point of disagreement between $\HOD$ and $V$. In Section~\ref{sec: optimality of Magidors} we prove the compactness of covering between $\HOD$ and $V$ and show that Magidor's covering theorem is optimal. Finally, Section~\ref{sec: open questions} features a few open questions.

\section{Preliminaries}\label{sec: coding}\label{sec: preliminaries}

The primary object of study in this manuscript is Gödel's universe of Hereditary Ordinal Definable sets, $\HOD$ (see \cite{Godel, Myhill}):
\begin{definition}
    Let $X$  be a set. The class of  \emph{Ordinal Definable sets relative to $X$}, denoted $\OD_X$,  consists of all sets $A$ for which there is a formula $\phi(z, x_0,\dots, x_{m-1},y_0\dots, y_{n-1})$ in the language of set theory, sets $a_0,\dots, a_{m-1}$ in  $X$ and ordinals $\beta, \alpha_0,\dots, \alpha_{n-1}$ such that $A$ can be presented as $$A=\{z\in V_\beta \mid V_\beta \models \varphi(z,a_0,\dots,a_{m-1},\alpha_0,\dots, \alpha_{n-1})\}.$$
    The class of \emph{Hereditarily Ordinal Definable sets relative to $X$}, denoted $\HOD_{\{x\}}$, is the class of all $A\in \mathrm{OD}_X$ such that $\mathrm{trcl}(\{A\})\s \OD_X$.

   We shall write $\OD$ and $\HOD$ for $\OD_\emptyset$ and $\HOD_\emptyset$, respectively.
\end{definition}
$\HOD$ is an \emph{inner model}; namely, a transitive $\in$-structure satisfying the ZFC axioms and containing all the ordinals (through the paper this is what we shall mean when we refer to an \emph{inner model}).

\smallskip

The main interest of this paper is to analyze how \emph{set-theoretic forcing} affects the properties of $\HOD$. Our main reference for the method of forcing is Kunen's book \cite{Kunen}. 
 Our notation for (forcing) posets will be $\mathbb{P},  \mathbb{Q}$, etc. Members $p$ of a poset $\mathbb{P}$ will be referred as \emph{conditions}. Given two conditions $p,q\in \mathbb{P}$ we will write $``q\leq p$'' as a shorthand for \emph{$``q$ is stronger than $p$''}. We shall denote by $\mathbb{P}/p$ the subposet of $\mathbb{P}$ whose universe is  $\{q\in \mathbb{P}\mid q\leq p\}$. Two conditions $p$ and $q$ are \emph{compatible} if there is a condition $r\in \mathbb{P}$ such that $r\leq p,q$. A set $D\s \mathbb{P}$ is called \emph{dense} if for each $p\in \mathbb{P}$ there is $q\in D$ such that $q\leq p$. A set $G\s \mathbb{P}$ is called a \emph{generic filter} if every two members of $G$ are compatible, $G$ is upwards-closed\footnote{I.e., if $p\in \mathbb{P}$ and $q\in G$ are such that $q\leq p$ then $p\in G$.}, and $G$ intersects every dense set. Central to this paper are projections and isomorphisms:

    \begin{definition}\label{def: projections}
        Let $\mathbb{P}$ and $\mathbb{Q}$ be forcing posets. \begin{itemize}
\item  A \emph{projection} is a map $\pi\colon \mathbb{P}\rightarrow\mathbb{Q}$ such that: \begin{enumerate}
\item $\pi(\one_{\mathbb{P}})=\one_{\mathbb{Q}}$.\footnote{Recall that the weakest condition of a forcing poset $\mathbb{P}$ is customarily denoted by $\one_{\mathbb{P}}$ -- or simply by $\one$ if there is no confusion. }
            \item For all $p,p'\in\mathbb{P}$ with $p\leq p'$, $\pi(p)\leq \pi(p')$;
            \item {For all $p\in\mathbb{P}$ and $q\in \mathbb{Q}$ with $q\leq \pi(p)$ there is $p'\leq p$  such that $\pi(p')\leq q$.}
        \end{enumerate} 
        
            \item We say that $\mathbb{P}$ and $\mathbb{Q}$ are  \emph{isomorphic} if there are maps $\phi\colon \mathbb{P}\to \mathbb{Q}$, $\psi\colon \mathbb{Q}\to \mathbb{P}$ such that $\phi\circ \psi=\id_{\mathbb{Q}}$ and $\psi\circ \phi=\id_{\mathbb{P}}$ and both $\phi$ and $\psi$ are order-preserving.
        \end{itemize} 
    \end{definition}

        Let $\mathbb{P}$ and $\mathbb{Q}$ be forcing notions and $G$ be $\mathbb{P}$-generic. If $\pi\colon \mathbb{P}\rightarrow\mathbb{Q}$ {is a projection}, then the upwards closure of the set $\pi``G$
            is a $\mathbb{Q}$-generic filter. For further information we refer to Abraham's Handbook chapter \cite{Abra}.

            \smallskip
            
Another concept used in this paper is due to Neeman \cite{Neeman2014}:
\begin{definition}
    Let $\mathbb{P}$ be a poset and $M$ be a set. A  function $f\colon \mathbb{P}\to \mathbb{P}\cap M$ is called a \emph{residue} if for any $p\in \dom(f)$,  every $q\leq f(p)$ with $q\in \mathbb{P}\cap M$ is compatible with $p$.
\end{definition}
The interest of this notion hinges on the following standard fact, whose proof we include for the benefit of the reader:
\begin{proposition}\label{cor: submodel}
Suppose that $f\colon \mathbb{P}\to \mathbb{P}\cap M$ is a residue. Then,  $\mathbb{P}\cap M$ is a complete subforcing of $\mathbb{P}$.\footnote{That is, any maximal antichain of $\mathbb{P}_M$ is also a maximal antichain of $\mathbb{P}$.} In particular, if $\Theta$ is a sufficiently large regular cardinal, $M\prec H_\Theta$ and $\mathbb{P}\in M$, $G\cap M$ is generic for $\mathbb{P}\cap M$.
\end{proposition}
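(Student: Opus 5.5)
We argue directly from the definition of a residue. Write $\mathbb{P}_M$ for $\mathbb{P}\cap M$. It suffices to check two things. First, $\mathbb{P}_M$ is a suborder of $\mathbb{P}$ whose compatibility relation agrees with that of $\mathbb{P}$. Second, every maximal antichain $A\s\mathbb{P}_M$ remains maximal in $\mathbb{P}$; this is the footnoted meaning of complete subforcing.

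For the second point, let $A\s \mathbb{P}_M$ be a maximal antichain of $\mathbb{P}_M$ and let $p\in\mathbb{P}$ be arbitrary. We use the residue $f(p)\in\mathbb{P}_M$. By maximality of $A$ in $\mathbb{P}_M$ there is $a\in A$ compatible with $f(p)$ inside $\mathbb{P}_M$, say via $q\in\mathbb{P}_M$ with $q\leq f(p),a$. Since $q\leq f(p)$ and $q\in\mathbb{P}_M$, the residue property gives that $q$ is compatible with $p$. So some $r\leq q,p$ exists, and then $r\leq a$, which shows $p$ is compatible with $a$. Hence $A$ is maximal in $\mathbb{P}$.

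For the first point, two conditions that are compatible in $\mathbb{P}_M$ are trivially compatible in $\mathbb{P}$. The converse is the delicate direction. Suppose $a,b\in\mathbb{P}_M$ have a common extension $r\in\mathbb{P}$. Apply the residue to $r$: then $f(r)\in\mathbb{P}_M$, and every $\mathbb{P}_M$-extension of $f(r)$ is compatible with $r$, hence with $a$ and with $b$. This alone does not yet give a common extension of $a$ and $b$ inside $M$, and I expect this to be the main obstacle in the abstract setting. In the intended application it is handled by elementarity: $\mathbb{P}\in M\prec H_\Theta$, and "$a,b$ are compatible" is witnessed in $H_\Theta$, so $M$ contains a witness $r\leq a,b$, which then lies in $\mathbb{P}_M$. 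In the abstract statement one may also note that the maximal-antichain formulation in the footnote is the operative definition, and that is exactly what the previous paragraph verified.

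For the \emph{in particular} clause, let $G$ be $\mathbb{P}$-generic and let $D\s\mathbb{P}_M$ be dense in $\mathbb{P}_M$. Choose a maximal antichain $A\s D$ of $\mathbb{P}_M$, using Zorn's lemma. By the previous paragraphs $A$ is maximal in $\mathbb{P}$, so $G\cap A\neq\emptyset$, and therefore $G\cap D\ne\emptyset$. The set $G\cap M$ is upward closed in $\mathbb{P}_M$. Any two of its members are compatible in $\mathbb{P}$, and by the elementarity remark above they are then compatible in $\mathbb{P}_M$. Thus $G\cap M$ is $\mathbb{P}_M$-generic over $V$.
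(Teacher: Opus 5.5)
Your argument is correct and is essentially the paper's: the residue shows every maximal antichain of $\mathbb{P}_M$ stays predense in $\mathbb{P}$, and elementarity of $M$ gives compatibility inside $\mathbb{P}_M$, hence the filter property of $G\cap M$. Your worry about the abstract case is justified, and the paper's proof passes over it: a residue alone does not keep antichains of $\mathbb{P}_M$ pairwise incompatible in $\mathbb{P}$, so the elementarity hypothesis is genuinely needed there. For instance, take the four-element poset with $r\leq a,b\leq 1$ and $a,b$ incomparable, let $\mathbb{P}\cap M=\{1,a,b\}$, $f(r)=a$, and $f$ the identity elsewhere; then $\{a,b\}$ is a maximal antichain of $\mathbb{P}\cap M$ but not an antichain of $\mathbb{P}$.
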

\begin{proof}
Fix $A\s \mathbb{P}\cap M$ a maximal antichain and let  $p\in \mathbb{P}$ be arbitrary. Since $A$ is maximal in $\mathbb{P}\cap M$, there is $q\in A$ that is $\mathbb{P}\cap M$-compatible with $f(p)$. Let $r\in \mathbb{P}\cap M$ be such that $r\leq f(p)$, $q$. Since $f$ is a residue  we can find $p'\in \mathbb{P}$ such that $p'\leq r, p$. In particular, $p'$ witnesses the compatibility of $p$ and $q$, ergo $A$ is a maximal antichain in $\mathbb{P}$ as well.

To establish the second claim we assume that $M\prec H_\Theta$. Under this assumption, any two conditions in  $G\cap M$ are compatible in $\mathbb{P}\cap M$, and $G\cap M$ is also upwards closed in this latter poset (i.e., $G\cap M$ is a filter). The fact that $G\cap M$ intersects all the maximal antichains of $\mathbb{P}\cap M$ (i.e.  $\mathbb{P}\cap M$-genericity) was already established in the previous argument.
\end{proof}
            
At many places in this paper we will be interested in controlling $\HOD$ of a generic extension. The concept that makes possible this analysis is the following:

    \begin{definition}
        A poset $\mathbb{P}$ is called \emph{cone homogeneous} if for every pair of conditions $p, q\in \mathbb{P}$ there are extensions $p'\leq p$ and $q'\leq q$ and isomorphism $\phi\colon \mathbb{P}/p'\to \mathbb{P}/q'$.
    \end{definition} 
    \begin{fact}[{\cite[Theorem 26.12]{Jech}}]
        Suppose that $\mathbb{P}$ is a cone homogeneous ordinal definable forcing. Then, $\HOD^{V[G]}\s \HOD^V$ for all generic   $G\s \mathbb{P}$. 
    \end{fact}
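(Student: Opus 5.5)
The plan is to reduce the inclusion $\HOD^{V[G]}\s\HOD^V$ to sets of ordinals, and then use cone homogeneity to show that any ordinal definable set of ordinals in $V[G]$ is decided outright by the weakest condition, so that it is definable in $V$.

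\emph{Step 1 (reduction to sets of ordinals).} It suffices to show that every set of ordinals $A\in\HOD^{V[G]}$ belongs to $\HOD^V$. Indeed, take $x\in\HOD^{V[G]}$. Since $\HOD^{V[G]}\models\mathrm{ZFC}$, there is in $\HOD^{V[G]}$ a bijection $e$ between $\mathrm{trcl}(\{x\})$ and an ordinal $\gamma$. Via Gödel pairing, $e$ turns the membership relation on $\mathrm{trcl}(\{x\})$ into a set of ordinals $E\s\gamma$, and $E\in\HOD^{V[G]}$. If $E\in\HOD^V$, then the Mostowski collapse of $(\gamma,E)$ can be computed inside $\HOD^V$. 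That collapse is $\mathrm{trcl}(\{x\})$, so $x\in\HOD^V$.

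\emph{Step 2 (the key claim: $\one$ decides the defining formula).} Let $A\s\alpha$ be $\OD$ in $V[G]$. Write
$$A=\{\xi<\alpha\mid V[G]_\beta\models\varphi(\xi,\vec\alpha)\},$$
and let $\psi(\xi,\vec\alpha,\beta)$ be the formula ``$V_\beta\models\varphi(\xi,\vec\alpha)$''. We claim that for each $\xi<\alpha$ either $\one\forces\psi(\check\xi,\check{\vec\alpha},\check\beta)$ or $\one\forces\neg\psi(\check\xi,\check{\vec\alpha},\check\beta)$. Suppose not. Then there are conditions $p$ forcing $\psi$ and $q$ forcing $\neg\psi$. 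By cone homogeneity, pick $p'\leq p$, $q'\leq q$ and an isomorphism $\phi\colon\mathbb{P}/p'\to\mathbb{P}/q'$.

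Now take (in a further extension, or working with the Boolean-valued forcing relation) a generic $H\ni p'$. Then the upwards closure $H'$ of $\phi``(H\cap\mathbb{P}/p')$ is $\mathbb{P}$-generic and contains $q'$. Moreover $V[H]=V[H']$, since each of $H$ and $H'$ is computable from the other using $\phi$ and $\phi^{-1}$, which lie in $V$. Because $\psi$ mentions only check names of ordinals, its truth value is the same in $V[H]$ and in $V[H']$. This contradicts $p'\forces\psi$ and $q'\forces\neg\psi$.

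To keep this rigorous in ZFC, I will phrase it syntactically. The map $\phi$ induces a map $\tau\mapsto\tau^\phi$ on $\mathbb{P}/p'$-names that fixes check names and satisfies $r\forces\chi(\tau)\iff\phi(r)\forces\chi(\tau^\phi)$. Together with the fact that the forcing relation below $p'$ agrees with forcing over the cone $\mathbb{P}/p'$, this gives the same contradiction.

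\emph{Step 3 (conclusion).} By Step 2,
$$A=\{\xi<\alpha\mid \one\forces_{\mathbb{P}}\psi(\check\xi,\check{\vec\alpha},\check\beta)\}.$$
The forcing relation for $\psi$ is definable in $V$, and $\mathbb{P}$ is $\OD$. Hence $A$ is definable in $V$ from ordinal parameters, i.e.\ $A\in\OD^V$. Since $A$ is a set of ordinals, all its elements are trivially in $\OD^V$, so $A\in\HOD^V$. Combining with Step 1 gives $\HOD^{V[G]}\s\HOD^V$.

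The main obstacle is Step 2: transferring the forcing relation across the cone isomorphism $\phi$. This requires checking that $\phi$ extends to names compatibly with the forcing relation, and that forcing over $\mathbb{P}/p'$ coincides with forcing over $\mathbb{P}$ below $p'$. These are routine but need care. The only other point is that $\psi$ must involve only ordinal (check-name) parameters, which is exactly why the argument applies to $\OD$ sets.
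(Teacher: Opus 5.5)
Your proposal is correct and is the standard argument behind this fact; the paper gives no proof of its own and simply cites Jech. You reduce to sets of ordinals, use cone homogeneity to show that $\one$ decides every sentence whose only parameters are check names of ordinals, and then read off an ordinal definition of $A$ in $V$ from the forcing relation of the $\OD$ poset $\mathbb{P}$. The one step left implicit is the usual reflection argument: a set definable over $V$ from ordinal parameters is $\OD$ in the paper's $V_\beta$-sense.
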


 There is a standard forcing that makes any set of ordinals $X$ be part of $\HOD$ of a certain generic extension. This forcing is due to McAloon \cite{McAloon}.  Given a set $X\s \alpha$ and a cardinal $\kappa$, McAloon's forcing  codes $X$ into the power-set-function-pattern  above $\kappa$. The forcing is defined as follows:  Let $\mathcal{E}_X\colon [\kappa^+,\kappa^{+\alpha}]\to \mathrm{Card}$ be the function defined as
$$\mathcal{E}_X(\kappa^{+\beta+1}):=\begin{cases}
    \kappa^{+\beta+2}, & \text{$\beta\in X$};\\
  \kappa^{+\beta+3}, & \text{$\beta\notin X$}.
\end{cases}$$
and denote by $\mathrm{Code}(X,\kappa)$ the Easton support product forcing\footnote{We refer our readers to \cite{Kunen} for the theory of product forcing.} 
$$\prod_{\beta<\alpha}\mathrm{Add}(\kappa^{+\beta+1},\mathcal{E}_X(\kappa^{+\beta+1})).$$
Assume the GCH holds in the interval of cardinals $[\kappa^+,\kappa^{+\alpha}]$. Then, $\mathrm{Code}(X,\kappa)$ codes $X$ into the power-set-function pattern above $\kappa$ in the following sense: If  $G\s\mathrm{Code}(X,\kappa)$ is a generic filter then, in $V[G]$,
$$X=\{\beta<\alpha\mid V[G]\models 2^{\kappa^{+\beta+1}}=\kappa^{+\beta+2}\}.$$
Hence $X$ is ordinal definable in $V[G]$. Since $X\s \mathrm{Ord}$ in fact $X\in \mathrm{HOD}^{V[G]}$.

\begin{fact}
    $\mathrm{Code}(X,\kappa)$ is a $\kappa^+$-directed-closed cone homogeneous forcing.
\end{fact}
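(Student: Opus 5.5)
The fact has two parts: $\kappa^+$-directed closure and cone homogeneity. I would prove them separately, using only the product structure of $\mathrm{Code}(X,\kappa)$ and the fact that each factor is a Cohen forcing.

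\emph{Directed closure.} Each factor $\mathrm{Add}(\kappa^{+\beta+1},\mathcal{E}_X(\kappa^{+\beta+1}))$ is $\kappa^{+\beta+1}$-directed closed, hence $\kappa^+$-directed closed, since conditions are partial functions of size ${<}\kappa^{+\beta+1}$. Let $D$ be a directed family of conditions with $|D|\le\kappa$. Put $q(\beta)=\bigcup_{p\in D}p(\beta)$ for each $\beta<\alpha$.
\begin{itemize}
\item Directedness ensures that each $q(\beta)$ is a function.
\item Each $q(\beta)$ has size ${<}\kappa^{+\beta+1}$, because it is a union of at most $\kappa$ sets of size ${<}\kappa^{+\beta+1}$ and $\kappa^{+\beta+1}$ is regular and ${>}\kappa$.
\end{itemize}
It remains to check that the support of $q$ is still an Easton set. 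For every inaccessible $\gamma$, each $\mathrm{supp}(p)\cap\gamma$ is bounded in $\gamma$. A union of $\kappa$ such bounded sets is bounded, because any relevant inaccessible $\gamma=\kappa^{+\beta}$ satisfies $\gamma>\kappa$, so $\cf(\gamma)=\gamma>\kappa$. Hence $q$ is a condition extending every member of $D$.

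\emph{Cone homogeneity.} Given conditions $p,q$, I would not extend them at all. For each coordinate $\beta$, I would build an automorphism $\sigma_\beta$ of the factor $\mathrm{Add}(\kappa^{+\beta+1},\mathcal{E}_X(\kappa^{+\beta+1}))$ sending $p(\beta)$ to a condition compatible with $q(\beta)$. The standard choice is the flip automorphism. Let $a_\beta=\dom p(\beta)\cap\dom q(\beta)$ and $b_\beta=\{x\in a_\beta\mid p(\beta)(x)\neq q(\beta)(x)\}$. Define $\sigma_\beta(r)$ to have the same domain as $r$, with the value of $r(x)$ flipped exactly when $x\in b_\beta$. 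This map is an order-preserving involution, and $\sigma_\beta(p(\beta))$ agrees with $q(\beta)$ on their common domain. When $p(\beta)=q(\beta)=\emptyset$, for instance off the supports, $\sigma_\beta$ is the identity.

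Let $\sigma$ be the coordinatewise product of the $\sigma_\beta$. It preserves supports, so it is an automorphism of the Easton product. Moreover $\sigma(p)$ and $q$ are compatible, with common extension $q'=\sigma(p)\cup q$ taken coordinatewise. Set $p'=\sigma^{-1}(q')\le p$. Then $\sigma$ restricts to an isomorphism $\mathrm{Code}(X,\kappa)/p'\to\mathrm{Code}(X,\kappa)/q'$, which is exactly cone homogeneity.

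There is no real obstacle. The only step needing care is checking the Easton support condition in the closure argument, and that is handled by the regularity of the inaccessibles above $\kappa$. Ordinal definability of $\mathrm{Code}(X,\kappa)$ from $X$ and $\kappa$ is immediate from its definition, although it is not part of the stated fact.
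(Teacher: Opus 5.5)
Your proof is correct. The paper states this as a Fact without proof, and your argument is the standard verification it takes for granted: coordinatewise unions together with the regularity of the $\kappa^{+\beta+1}$ and of the inaccessibles above $\kappa$ give $\kappa^+$-directed closure, with Easton support correctly measured on the cardinals $\kappa^{+\beta+1}$ rather than on the indices $\beta$. Your coordinatewise flip automorphisms in fact establish weak homogeneity, which is stronger than cone homogeneity and is the form the paper actually invokes later, when it uses that $\mathrm{Code}(x,\aleph_\omega)$ is weakly homogeneous.
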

Notice that $\mathrm{Code}(X,\kappa)$ is typically not ordinal definable in $V$. Indeed, this is the case in the (interesting) scenario where we want to code some  $X\notin \OD$ into the continuum pattern above $\kappa$. 

\smallskip

By forcing with a class iteration that codes every set of ordinals $X$ into the continuum function through forcings of the form $\mathrm{Code}(X,\kappa)$, for suitably chosen cardinals $\kappa$, McAloon proved the following well-known theorem:

\begin{fact}[McAloon, {\cite{McAloon}}]
  There is a class forcing poset sets yielding a generic extension where the axiom $V=\mathrm{HOD}$ holds.
\end{fact}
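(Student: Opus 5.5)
The statement to prove is McAloon's theorem: some class forcing yields an extension satisfying $V=\HOD$. I will build a class-length Easton-support iteration $\mathbb{P}=\langle \mathbb{P}_\alpha,\dot{\mathbb{Q}}_\alpha\mid \alpha\in\mathrm{Ord}\rangle$ that codes every set of ordinals into the continuum pattern, using the coding posets $\mathrm{Code}(X,\kappa)$ described above.

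\emph{Preparation.} First I force GCH, for instance with the canonical Easton-support iteration adding a Cohen subset of each successor cardinal. This is a definable class forcing, and it gives the coding posets a clean background in which the patterns $2^{\kappa^{+\beta+1}}\in\{\kappa^{+\beta+2},\kappa^{+\beta+3}\}$ can be read off.

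\emph{Bookkeeping.} Next I fix a closed unbounded class of cardinals $\kappa_\alpha$, with each $\kappa_{\alpha+1}$ far above $\kappa_\alpha^{+\kappa_\alpha+3}$. The intervals $I_\alpha=[\kappa_\alpha^+,\kappa_\alpha^{+\kappa_\alpha}]$ are then pairwise disjoint, and the coding at stage $\alpha$ only affects cardinals inside $I_\alpha$.

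\emph{Stage $\alpha$.} In $V^{\mathbb{P}_\alpha}$, choose a set of ordinals $X_\alpha\subseteq\kappa_\alpha$ that codes all of $H_{\kappa_\alpha}^{V^{\mathbb{P}_\alpha}}$, via a well-order of that set. Let $\dot{\mathbb{Q}}_\alpha$ be the product over all such candidate codes, or just $\mathrm{Code}(X_\alpha,\kappa_\alpha)$. To keep the definition canonical I simply code every subset of $\kappa_\alpha$ in the extension; since $|\mathcal P(\kappa_\alpha)|=\kappa_\alpha^+$, one set $X_\alpha\subseteq\kappa_\alpha^+$ suffices, and I use the interval $[\kappa_\alpha^+,\kappa_\alpha^{+\kappa_\alpha^+}]$ instead. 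Each stage is $\kappa_\alpha^+$-directed closed. Because of this, the standard Easton class-iteration arguments give three things: ZFC is preserved, and sets of rank below $\kappa_\alpha$ are added only by $\mathbb{P}_{\alpha+1}$. The tail forcing is also too closed to change the continuum pattern on $I_\alpha$, because each later stage adds subsets only of cardinals above $\kappa_{\alpha+1}$ and is $\kappa_{\alpha+1}$-closed.

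\emph{Verification.} Let $G$ be generic and $A\subseteq\mathrm{Ord}$ in $V[G]$. Pick $\alpha$ with $A\subseteq\kappa_\alpha$. Then $A\in V[G_\alpha]$, so $A$ is coded in $X_\alpha$, and $X_\alpha$ equals $\{\beta\mid 2^{\kappa_\alpha^{+\beta+1}}=\kappa_\alpha^{+\beta+2}\}$ as computed in $V[G]$. This holds by GCH in the ground and preservation by the tail. Hence $X_\alpha$ is ordinal definable in $V[G]$ from $\kappa_\alpha$. The well-order decoding is definable from $X_\alpha$, and $A$ is its $\gamma$-th element for some ordinal $\gamma$. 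So $A$ is $\OD$, and $A\in\HOD$ since $A\subseteq\mathrm{Ord}$. Since every set is coded by a set of ordinals via a definable procedure from its ordinal code, $V[G]=\HOD^{V[G]}$.

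\emph{Main obstacle.} I expect the difficulty to be the class-forcing bookkeeping: showing that the iteration preserves ZFC, in particular Replacement and Power Set, and that the continuum function on $I_\alpha$ is truly frozen after stage $\alpha$. For Replacement I would use the standard factorization $\mathbb{P}\cong\mathbb{P}_{\alpha}*\dot{\mathbb{P}}^{\alpha}$ with $\dot{\mathbb{P}}^{\alpha}$ being $\kappa_\alpha$-closed (progressive closure), together with Easton's lemma for the chain condition of the initial segment. Checking that the continuum values computed in $V[G_{\alpha+1}]$ persist is exactly where the disjointness of the intervals and the closure of the tail get used.
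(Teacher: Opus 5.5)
Your proposal is correct and follows the route the paper indicates. The paper only sketches McAloon's theorem as a class iteration coding sets of ordinals into the continuum function via the posets $\mathrm{Code}(X,\kappa)$ at suitably spaced cardinals over a GCH background, and your version (force GCH, then at stage $\alpha$ code a single $X_\alpha\subseteq\kappa_\alpha^+$ bundling all of $\mathcal{P}(\kappa_\alpha)^{V[G_\alpha]}$, with tail closure freezing the pattern) is a standard way to fill this in. One point needs tidying: the iteration must be a definable class, so $\dot{X}_\alpha$ cannot just be \emph{chosen} without a global well-order, and the product over all candidate codes would destroy the decoding. Instead, first add a Cohen subset $g$ of $\kappa_\alpha^+$, which by density contains every subset of $\kappa_\alpha$ in $V[G_\alpha]$ as a block, then force with $\mathrm{Code}(g,\kappa_\alpha)$; alternatively, take a lottery sum of $\mathrm{Code}(X,\kappa_\alpha)$ over all admissible codes $X$.
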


A strengthening of $``V=\HOD$" relevant to this paper was introduced by Fuchs--Hamkins-- Reitz in \cite{Geology}:

\begin{definition}[Fuchs--Hamkins--Reitz]
The \emph{generic $\HOD$ of $V$}, denoted $\mathrm{gHOD}$, is the intersection of the $\HOD$s of all the generic extensions of $V$. That is, $x\in \mathrm{gHOD}$ if and only if $x\in \HOD^{V[G]}$ for every set forcing $\mathbb{P}$ and $G\s \mathbb{P}$ generic. Equivalently, 
$$\mathrm{gHOD}=\{x\mid \forall \mathbb{P}\, \one\forces_{\mathbb{P}}\check{x}\in\dot{\HOD}\}$$
\end{definition}

\begin{fact}[{\cite[Theorem 66]{Geology}}]
    McAloon's iteration  forces $V=\mathrm{gHOD}$.
\end{fact}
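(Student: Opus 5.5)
The statement to be proved is the last Fact quoted: McAloon's iteration forces $V=\mathrm{gHOD}$. The plan is to show that every set in the final model $V[G]$ is coded into the continuum function of $V[G]$ in a way no further set forcing can destroy. Since every set is coded by a set of ordinals, it suffices to treat sets of ordinals, and even elements of $\mathrm{gHOD}$ that are sets of ordinals.

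\textbf{Setting up the iteration.} We may assume GCH in the ground model, since it can be forced first. Fix a class iteration $\mathbb{P}=\langle \mathbb{P}_\xi,\dot{\mathbb{Q}}_\xi\mid \xi\in\mathrm{Ord}\rangle$ with Easton support. It runs through a club class of cardinals $\kappa_\xi$, chosen so that $\kappa_\xi$ exceeds everything added before stage $\xi$ and the coding blocks $[\kappa_\xi^+,\kappa_\xi^{+\alpha_\xi}]$ are pairwise disjoint. At stage $\xi$ it forces with $\mathrm{Code}(X_\xi,\kappa_\xi)$, where a bookkeeping function makes every set of ordinals of the intermediate model appear as some $X_\xi$ cofinally often.

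\textbf{Standard class-forcing facts.} Each tail $\mathbb{P}/\mathbb{P}_\xi$ is $\kappa_\xi^+$-directed closed. Hence $\mathbb{P}$ preserves ZFC by the usual Easton-support arguments, and every set of ordinals of $V[G]$ lies in some $V[G_\xi]$. So every set $X\subseteq\alpha$ of $V[G]$ is some $X_\xi$, with $\xi$ arbitrarily large, and GCH is preserved outside the coding pattern. As computed in the Preliminaries, after the whole iteration
\[
X=\{\beta<\alpha\mid 2^{\kappa_\xi^{+\beta+1}}=\kappa_\xi^{+\beta+2}\},
\]
because later stages are too closed to change these values and earlier stages are too small. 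This gives $V[G]=\HOD^{V[G]}$, since HOD is closed under the coding of sets by sets of ordinals.

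\textbf{Surviving further set forcing.} Now let $\mathbb{R}\in V[G]$ be a set forcing with $|\mathbb{R}|<\lambda$, and let $H$ be $\mathbb{R}$-generic. Choose a stage $\xi$ with $X=X_\xi$ and $\kappa_\xi>\lambda$; this is possible since $X$ is coded cofinally often. Small forcing preserves cardinals and the values of $2^\nu$ for $\nu>\lambda$, so the displayed definition still defines $X$ in $V[G][H]$. The remaining difficulty is that the definition uses the parameter $\kappa_\xi$ and the choice of stage, while $\HOD^{V[G][H]}$ has no access to the iteration. To handle this, design the bookkeeping so that the coding locations are definable from the continuum function alone. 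For example, arrange that GCH holds everywhere except inside the coding blocks. Mark the start of each block by a distinguished failure pattern, say $2^{\kappa_\xi^+}=\kappa_\xi^{+4}$ as a header, and shift the codes accordingly. With this marking, $X$ is defined in $V[G][H]$ from the ordinals $\alpha$ and $\kappa_\xi$ by the formula ``the pattern in the block starting at $\kappa_\xi$''. Ordinal parameters are allowed, and $\kappa_\xi>\lambda$ is unaffected by $\mathbb{R}$.

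Hence $X$ is ordinal definable in $V[G][H]$. Its elements are ordinals, so $X\in\HOD^{V[G][H]}$. Every set of $V[G]$ is coded by such an $X$ through a definable decoding, so every set of $V[G]$ lies in $\mathrm{gHOD}^{V[G]}$, which gives $V[G]=\mathrm{gHOD}$.

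\textbf{Main obstacle.} The delicate step is the class-forcing bookkeeping. The codes placed at later stages must not interfere with earlier blocks, in particular through singular limits of blocks, where $2^\nu$ is determined by values below $\nu$. Also, every set must really be enumerated even though new sets keep appearing. I would handle both by placing the $\kappa_\xi$ at successors of strong limits with gaps, and by using a definable well-order of $V[G_\xi]$ from the coding up to that stage.
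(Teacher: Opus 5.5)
Your argument is correct and is the standard one behind the cited result (the paper gives no proof of its own, only the reference to Fuchs--Hamkins--Reitz): a set forcing of size $<\lambda$ changes neither the cardinals nor the values of $2^\nu$ for $\nu\geq\lambda$, so a set coded into the continuum function at arbitrarily high blocks stays ordinal definable, from the ordinals $\kappa_\xi$ and $\alpha$, in every set-generic extension of $V[G]$. Two minor remarks. First, the ``header'' device is superfluous, because $\kappa_\xi$ may be used directly as an ordinal parameter. Second, your bookkeeping via a well-order read off from the codes placed so far cannot reach the ground-model sets unless $V$ already has a definable well-order; one remedy is to add at stage $\xi$ a Cohen subset of $\kappa_\xi^+$, which by density contains every subset of $\kappa_\xi$ in $V[G_\xi]$ as a segment, and to code that set instead. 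This second point concerns the construction of McAloon's iteration, which the statement takes as given, rather than the Fact itself.
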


\medskip

In this manuscript we will be interested in  the possible first places of disagreement (if any) between the set-theoretic universe and $\HOD$. That is, the first ordinal $\alpha$  where $\mathcal{P}(\alpha)\nsubseteq \HOD$. A property that is tightly related to this study is that of \emph{covering}. The first known study of it traces back to Mitchell's theorem on the \emph{Tree Property}  \cite{MitchellAronszajn}. Recently, the property experienced a renewed interest with Hamkins' systematic study in \cite{HamCover}:

\begin{definition}[Cover]
  Let $V\s W$ be inner models and let $\kappa$ be a cardinal in $W$. We say that the pair $(V,W)$ has the \emph{$\kappa$-cover property} if given any set $X\in W$ with $X\s V$ and $|X|^W<\kappa$ there is $Y\in V$ with $X\s Y$ and $|Y|^W<\kappa$.

  Given a set $a\in V$, we say that the pair $(V,W)$ has the \emph{$\kappa$-cover property at $a$} if for every $X\in W$ with $X\s a$ and $|X|^W<\kappa$ there is $Y\in V$ with $X\s Y\s a$ and $|Y|^W<\kappa$. 
\end{definition}

The interest in the covering property lies in the fact that it provides a clean combinatorial measure of how closely an inner model approximates the ambient universe. In its standard form, covering asserts that sufficiently small sets in $V$ are contained in comparably small sets belonging to the inner model. Thus, covering gives a precise way of gauging the extent to which $V$ can be approximated from within a canonical inner model.

This idea has played a central role in the inner model program, where covering lemmas are used to analyze how close an $L$-like inner model, or more generally the appropriate core model (if exists)  is to the true universe of sets. For instance, for the relevant core model $\mathcal K$, the non-existence in $V$ of certain large cardinals entails strong forms of covering between $\mathcal K$ and $V$ \cite{MitchellHandbook,SteelHandbook}. A more recent theorem of Woodin shows that, assuming the $\HOD$ Hypothesis, if $\kappa$ is $\HOD$-supercompact, then the pair $(\HOD,V)$ satisfies the $\kappa$-covering property \cite{WoodinPartI}.

\smallskip

The following striking theorem of Magidor, see \cite[Theorem 2.33]{AbrahamMagidor}, illustrates the strength of even a local covering assumption: the $\omega_1$-covering property, together with two relatively mild combinatorial hypotheses, already implies full covering.

\begin{theorem}[Magidor]\label{thm: Magidor}
    Suppose $M$ is an inner model such that:
    \begin{enumerate}
        \item The $\mathrm{GCH}$ holds in $M$.
        \item $M$ is cofinality-correct (i.e., for every ordinal $\alpha$, $\cf^M(\alpha)=\cf(\alpha)$).
        \item $(M,V)$ has the $\omega_1$-covering property. 
    \end{enumerate}
    Then, $(M,V)$ has the full covering property.
\end{theorem}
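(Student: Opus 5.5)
The plan is to prove, by induction on the $V$-cardinal $\kappa\geq\omega_1$, the following statement: every set of ordinals $X\in V$ with $|X|\leq\kappa$ is contained in some $Y\in M$ with $|Y|^V\leq\kappa$. The case $\kappa=\omega_1$ follows from hypothesis~(3), by covering countably many countable pieces and taking unions inside $M$ after coding. For a fixed $\kappa$ I would run a secondary induction on $\delta=\sup X$, and split into cases according to the nature of $\delta$ in $M$.

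\emph{Case 1: $\delta$ is not an $M$-cardinal.} Take a bijection $f\in M$ between $|\delta|^M$ and $\delta$. Then $f^{-1}[X]$ is a subset of the smaller ordinal $|\delta|^M$. Cover $f^{-1}[X]$ by induction and push the cover forward along $f$.

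\emph{Case 2: $\delta$ is an $M$-cardinal with $\cf(\delta)>\kappa$.} Then $X$ is bounded in $\delta$, and the induction hypothesis applies directly.

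\emph{Case 3: $\delta$ is an $M$-cardinal with $\mu:=\cf(\delta)\leq\kappa$.} By hypothesis~(2), $\cf^M(\delta)=\mu$. If $\delta=\mu$ we are done, because $\delta$ itself has size $\leq\kappa$. Otherwise $\delta$ is singular in $M$. Fix a club $\langle\delta_i\mid i<\mu\rangle\in M$ in $\delta$ consisting of $M$-cardinals. Each piece $X\cap\delta_{i+1}$ is covered by induction by some $Y_i\in M$ with $|Y_i|^V\leq\kappa$. The difficulty is that the sequence $\langle Y_i\mid i<\mu\rangle$ need not lie in $M$.

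This gluing step is the main obstacle, and it is where GCH in $M$ enters. In $M$, fix for each $i$ an enumeration $e_i$ of $([\delta_{i+1}]^{\leq\theta})^M$, where $\theta$ is a suitable bound on the $M$-sizes of the $Y_i$. Choose the $\delta_{i+1}$ so that $\cf^M(\delta_{i+1})>\theta$, which is possible by passing to successor $M$-cardinals. GCH in $M$ then makes each $e_i$ have length $\delta_{i+1}$, with the sequence $\langle e_i\mid i<\mu\rangle\in M$. Now code the choice of covers by the ordinals $\gamma_i$ with $e_i(\gamma_i)=Y_i$. The set $\{\gamma_i\mid i<\mu\}$ has size $\mu\leq\kappa$ and is again a set of ordinals below $\delta$.

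It remains to cover the code set, and two sub-cases arise. When $\mu<\kappa$, the outer induction on $\kappa$ gives a cover $Z\in M$ of $\{\gamma_i\mid i<\mu\}$ of size $<\kappa$. Then $Y=\bigcup\{e_i(\gamma)\mid i<\mu,\ \gamma\in Z\cap\delta_{i+1}\}$ is in $M$, contains $X$, and has $V$-size $\leq\kappa$.

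When $\mu=\kappa$ the code set is as large as $X$, and the argument above is circular. To break the circularity I would replace the coding by a continuity argument. Take an internally approachable chain $\langle N_\alpha\mid\alpha<\omega_1\rangle$ of elementary substructures of $(H_\Theta,\in,M\cap H_\Theta)$, each of size $\kappa$, containing $X$ and $\kappa$. Use hypothesis~(3) at each countable stage to cover the countable traces of the relevant $M$-enumerations. Then use cofinality-correctness to see that $\sup(N\cap\delta_{i})$ is computed the same way in $M$. This should show that $N\cap\mathrm{Ord}$ is covered by a set in $M$ of size $\kappa$.

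I expect this regular-cofinality gluing, i.e.\ Case~3 with $\mu=\kappa$, to be the genuinely delicate step of the proof.
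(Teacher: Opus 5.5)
Your Cases 1 and 2 are fine. So is Case 3 with $\mu<\kappa$, where you code the covers through the GCH enumerations $e_i$ and cover the code set by the outer induction. The gap is Case 3 with $\mu=\kappa$, i.e.\ $|X|=\cf(\delta)\geq\omega_1$. This case is the whole content of the theorem, and what you write there is not an argument.

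The case already appears in your ``base case''. A set of size $\aleph_1$ is not a union of countably many countable sets, and an unbounded $X\subseteq\aleph_{\omega_1}$ of size $\aleph_1$ is exactly an instance of $\mu=\kappa=\omega_1$. Hypothesis (3) only handles $|X|\leq\aleph_0$.

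The continuity sketch has three problems:
\begin{itemize}
\item the $N_\alpha$ have size $\kappa$, so there are no countable traces for (3) to cover;
\item saying that $\sup(N\cap\delta_i)$ is ``computed the same way in $M$'' has no content, since $N\notin M$;
\item since $X\subseteq N$, covering $N\cap\delta$ is just the original problem again.
\end{itemize}
More decisively, the sketch never uses GCH in $M$, and no argument for this case can avoid it. Theorem~\ref{thm:optimality of Magidor} produces a pair with the same cofinalities, $\omega_1$-covering, and full covering at every $\alpha<\kappa$, which is everything your inductive hypotheses supply at that instance. There $\cf(\kappa)=\omega_1$, yet some $X\subseteq\kappa$ of size $\aleph_1$ has no inner-model cover of size $\aleph_1$, and GCH fails in the inner model on a club below $\kappa$.

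The missing idea is to dominate the code function by a single function lying in $M$; this is where pcf enters.
\begin{itemize}
\item Run Shelah's representation theorem inside $M$. This gives a club $\langle\delta_i\mid i<\mu\rangle$ of $M$-singular cardinals and an $M$-scale $\langle f_\xi\mid\xi<\delta^+\rangle$ in $\prod_{i<\mu}\delta_i^{+}$ with the continuity property at limits.
\item Cofinality-correctness, via $(\star)_\theta$ and the resulting exact upper bound \cite[\S2]{AbrahamMagidor}, shows that on a subclub of indices this is still a scale in $V$. This is the real use of hypothesis (2).
\item By GCH in $M$, enumerate $([\delta_i]^{\leq\mu})^M$ as $\langle Z^i_\eta\mid\eta<\delta_i^+\rangle$. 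Let $F(i)$ be the index of an $M$-cover of $X\cap\delta_i$ of size $\leq\mu$.
\item Then $F<^*f_\xi$ for some $\xi$; discarding an initial segment, assume $F<f_\xi$. Since $f_\xi\in M$, you may fix in $M$ a sequence of injections $g_i\colon f_\xi(i)\to\delta_i$.
\item At limit $i$, the ordinal $g_i(F(i))$ lies below $\delta_i=\sup_{l<i}\delta_l$. Fodor's lemma on $\mu\cap\cof(\omega)$, which is where $\mu\geq\omega_1$ is needed, gives a stationary $S$ and an $i^*$ with $g_i(F(i))<\delta_{i^*}$ for all $i\in S$.
\item This code set is bounded in $\delta$, so your induction on $\delta$ covers it by some $U\in M$ with $|U|\leq\mu$.
\item Then $Y=\bigcup\{Z^i_\eta\mid i<\mu,\ \eta\in g_i^{-1}[U]\}$ lies in $M$, has size $\leq\mu$, and contains $X$ because $S$ is unbounded.
\end{itemize}
This is exactly the argument of Proposition~\ref{lemma: local Magidor}, and it is what breaks the circularity you correctly identified.
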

In his Ph.D. thesis \cite{SebaPhD}, Thei has shown that the GCH can be weakened to the \emph{Singular Strong Hypothesis}. In \S\ref{sec: optimality of Magidors} we shall show that this assumption cannot be removed from the theorem.

\section{The first point of disagreement}\label{sec: compactness HOD}

In this section we are interested in studying the minimal ordinal $\kappa$ --provided  it exists-- for which there is a set $X\s \kappa$ not in $\HOD$. 

\begin{definition}
    Let $(M,N)$ be pair of inner models of $\mathrm{ZFC}$ with $M\subsetneq N$. The \emph{first point of disagreement between $M$ and $N$} is
    $$\Delta(M,N):=\min\{\kappa\in \mathrm{Ord}\mid \mathcal{P}^N(\kappa)\nsubseteq M\}$$
\end{definition}

\begin{proposition}\label{prop: basic coding}
    Let $X\in N$ be a transitive set such that $|X|^N<\Delta(M,N)$. Then, $\mathcal{P}^N(X)\subseteq M$. In particular, $\Delta(M,N)$ is an $N$-cardinal.
\end{proposition}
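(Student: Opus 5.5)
Write $\delta:=\Delta(M,N)$ and $\lambda:=|X|^N<\delta$. In $N$ fix a bijection $f\colon \lambda\to X$. The plan is to move subsets of $X$ to subsets of ordinals below $\delta$, which lie in $M$ by minimality of $\delta$, and then decode them inside $M$. This requires the decoding map $f$ to be in $M$. Since $f$ itself need not be in $M$, we encode it by a set of ordinals: the structure $(X,\in)$ is transported to the relation
\[
E:=\{(\alpha,\beta)\in\lambda\times\lambda \mid f(\alpha)\in f(\beta)\}.
\]
Via Gödel pairing, $E$ corresponds to a subset of $\lambda\times\lambda$, which has cardinality $\lambda$ (if $\lambda$ is infinite) and so can be identified with a subset of some ordinal below $\delta$.

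First we must check that the pairing bijection between $\lambda\times\lambda$ and an ordinal is absolute. We use the Gödel pairing function $\Gamma$, which is definable and absolute between transitive models. It maps $\lambda\times\lambda$ onto an ordinal $\gamma$ with $|\gamma|^N=\lambda$. Hence $\gamma<\delta$ holds as long as we arrange $\gamma<\delta$. If $\lambda$ is infinite, $\Gamma[\lambda\times\lambda]=\lambda$ for cardinals $\lambda$ of $N$. The finite case is trivial, since $X$ finite and transitive gives $X\in V_\omega\subseteq M$. So $\Gamma[E]\subseteq\lambda<\delta$, whence $\Gamma[E]\in M$ and therefore $E\in M$.

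Next we check that $E$ is well-founded and extensional on $\lambda$ (because $X$ is transitive and $f$ is a bijection). So in $M$ the Mostowski collapse $\pi\colon(\lambda,E)\to(Y,\in)$ exists, and by absoluteness of the collapse it coincides with the collapse computed in $N$. In $N$ the collapse is $f$ itself, since $X$ is transitive. Thus $f=\pi\in M$ and $X=Y\in M$. Now let $A\in\mathcal{P}^N(X)$. Then $f^{-1}[A]\subseteq\lambda<\delta$ lies in $M$, so $A=f[f^{-1}[A]]\in M$.

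For the ``in particular'' clause, suppose $\delta$ is not an $N$-cardinal, say $|\delta|^N=\lambda<\delta$. Apply the first part to the transitive set $X=\delta$: this gives $\mathcal{P}^N(\delta)\subseteq M$, contradicting the definition of $\delta$. The main obstacle is the step that gets $f$ into $M$; it is handled by the well-foundedness and absoluteness of the Mostowski collapse, so no real difficulty is expected.
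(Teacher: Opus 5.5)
Your proof is correct and follows the paper's argument step for step. You code the membership relation of $X$ through a bijection $f\colon\lambda\to X$ and G\"odel pairing as a subset of $\lambda<\Delta(M,N)$, recover $f$ in $M$ as the absolute Mostowski collapse, pull subsets of $X$ back along $f$, and then apply the result to $X=\Delta(M,N)$; your separate finite case and explicit check that $E$ is well-founded and extensional are harmless details the paper leaves implicit.
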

\begin{proof}
    Denote  $\kappa:=|X|^N$ and let $\pi\colon \kappa \to X$ be a bijection in $N$. Consider $R\s \kappa\times \kappa$ the relation defined by $$\text{$\beta\, R\, \gamma$ if and only if $\pi(\beta)\in \pi(\gamma)$.}$$ Clearly, $\pi\colon (\kappa, R)\to (X,\in)$ is an isomorphism of structures. Since $(X,\in)$ is transitive, $\pi$ must be the Mostowski collapse map on $(\kappa,R)$. 
    Let $R^* \s \kappa$ be the result of applying Gödel's pairing function to the relation $R$. Since we assume that  $\kappa < \Delta(M,N)$ it follows that  $R^*\in M$ and by absoluteness of Gödel's pairing and of the collapse we deduce that  $\pi\in M$. 

Let $A\in \mathcal{P}^N(X)$, and set $B:=\pi^{-1}[A]$. Use our assumption to infer that $B\in M$. Since $\pi\in M$ we conclude that $A\in M$, concluding that $\mathcal{P}^N(X)\subseteq M$.

 To see the second part we argue as follows:  If  $\Delta(M,N)$ were not an $N$-cardinal then $|\Delta(M,N)|^N<\Delta(M,N)$ and by the first part, $\mathcal{P}(\Delta(M,N))\s M$. However, by the very definition of $\Delta(M,N)$, $\mathcal{P}^N(\Delta(M,N))\not\subseteq M$. A contradiction.
\end{proof}

The case of primary interest for us is when the ambient universe $N$ coincides with the set-theoretic universe, $V$, and the inner model $M$ equals $\HOD$.  In the forthcoming sections we discuss what are the possible first points of disagreements between $V$ and $\HOD$. We divide the discussion into three families of cardinals: regulars, singulars of countable cofinality and singulars of uncountable cofinality.

\subsection{Regular cardinals}
In the context of regular cardinals, any of them can be the first point of disagreement between $V$ and $\HOD$.\footnote{Just start from a model of $V=g\HOD$ and force with $\Add(\kappa,1)$.} 
\label{Fact: critical point can be measurable}
In fact if $\kappa$ possess any mild large cardinal property (say, Inaccessibility, Mahloness, Weak Compactness, etc.) then $\kappa$ can be the first point of disagreement and still retain this large property in the obtained extension. The same conclusion applies if $\kappa$ is a   supercompact cardinal, by dint of Laver's  \cite{Lav}.

As it turns, there is a large-cardinal threshold for which such a flexibility disappears. This is the case of the \emph{$\HOD$-large cardinal hierarchy}, which stems from work of Woodin on the \emph{HOD Conjecture} \cite{WoodinPartI}. HOD-supercompactness is a pivotal notion for the study of HOD, being extensively studied by Woodin \cite{midrasha, WoodinPartI}, Sargsyan \cite{Sargsyan2008}, Apter--Friedman--Fuchs \cite{APTER2021102901} and Goldberg--Osinski--Poveda \cite{GOP}, among others.

\smallskip

The following natural variation of Woodin's notion was studied in  \cite{APTER2021102901}:

\begin{definition}[Apter-Friedman-Fuchs {\cite{APTER2021102901}}]
    Let $N$ be an inner model. Let $\kappa$ be a cardinal and $X$ a set. Then $\kappa$ is $(N, X)$-measurable if
there is a $j \colon V \to M$ with $\crit(j)=\kappa$, $j(\kappa) > \text{rnk}(X)$ and
$j(N) \cap X = N \cap X$.
\end{definition}
As the authors observed, an equivalent formulation for $(N,\mathcal{P}(\kappa))$-measura\-bility is the existence of a normal ultrafilter $U$ on $\kappa$, such that $$\text{$\langle N,U\cap N,\in\rangle\models ``U\cap N$ is a normal measure on $\kappa$"}$$ which is weakly amenable\footnote{We say that $U$ is weakly amenable to $N$ if for any sequence $\langle X_\xi\mid \xi<\kappa\rangle\subseteq \mathcal{P}(\kappa)\cap N$ in $N$, we have that $\{\xi<\kappa\mid X_\xi\in U\}\in N$.} to $N$.
\begin{proposition}\label{Prop: first point is not HOD-measurable}
    If $\kappa$ is $(\HOD,\mathcal{P}(\kappa))$-measurable, then $\Delta(\HOD, V)\neq \kappa$.
\end{proposition}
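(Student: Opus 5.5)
The plan is to argue by contradiction, using the elementary embedding witnessing $(\HOD,\mathcal{P}(\kappa))$-measurability to carry the agreement of $\HOD$ and $V$ below $\kappa$ up to $\kappa$ itself. Suppose $\Delta(\HOD,V)=\kappa$. Then $\mathcal{P}(\alpha)\s\HOD$ for every $\alpha<\kappa$, and there is some $A\s\kappa$ with $A\notin\HOD$. Fix $j\colon V\to M$ with $\crit(j)=\kappa$, $j(\kappa)>\mathrm{rnk}(\mathcal{P}(\kappa))=\kappa+1$, and
$$j(\HOD)\cap\mathcal{P}(\kappa)=\HOD\cap\mathcal{P}(\kappa).$$
Since $\HOD$ is a definable class, $j(\HOD)=\HOD^M$.

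First I apply elementarity to the first-order statement $\forall\alpha<\kappa\,(\mathcal{P}(\alpha)\s\HOD)$. It yields $M\models\forall\alpha<j(\kappa)\,(\mathcal{P}(\alpha)\s\HOD)$. Because $\kappa<j(\kappa)$, this gives $\mathcal{P}^M(\kappa)\s\HOD^M=j(\HOD)$.

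Next I check that $A\in M$. This holds because $\crit(j)=\kappa$ gives $A=j(A)\cap\kappa$, and $j(A)$ and $\kappa$ both lie in $M$. Hence $A\in\mathcal{P}^M(\kappa)\s j(\HOD)$, so $A\in j(\HOD)\cap\mathcal{P}(\kappa)=\HOD\cap\mathcal{P}(\kappa)$. This contradicts the choice of $A$.

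I expect no real obstacle here. The only points needing care are that "$\mathcal{P}(\alpha)\s\HOD$ for all $\alpha<\kappa$" is first-order expressible with parameter $\kappa$, so that $j$ transfers it with $j(\HOD)$ read as $\HOD^M$, and that the hypothesis $j(\kappa)>\mathrm{rnk}(X)$ is used only to guarantee $\kappa<j(\kappa)$. Alternatively, the same argument can be phrased through the weakly amenable normal measure $U$ by forming the ultrapower.
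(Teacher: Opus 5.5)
Your proposal is correct and follows essentially the same route as the paper's proof. Both arguments put the offending set $A\subseteq\kappa$ into $M$ using $\crit(j)=\kappa$. Both use the agreement $\HOD^M\cap\mathcal{P}(\kappa)=\HOD\cap\mathcal{P}(\kappa)$ to get $A\notin\HOD^M$, which contradicts the elementarity consequence $\Delta(\HOD^M,M)=j(\kappa)>\kappa$. You spell out that elementarity step explicitly, where the paper only compresses it into a single phrase.
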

\begin{proof}
    Suppose otherwise and let $j:V\to M$ be an embedding witnessing $(\HOD,\mathcal{P}(\kappa))$-measurability. Let $Y\in \mathcal{P}(\kappa)\setminus \mathcal{P}^{\HOD}(\kappa)$. Since $\crit(j)=\kappa$, $Y\in M\cap \mathcal{P}(\kappa)$. Note that $Y\notin \mathcal{P}(\kappa)\cap \HOD^M$, since by assumption $\HOD\cap \mathcal{P}(\kappa)=\HOD^M\cap \mathcal{P}(\kappa)$. Hence $M$ thinks that  $j(\kappa)>\crit^{\HOD^M}(M)$, contradiction.  
\end{proof}
Apter--Friedman--Fuchs proved that any cardinal $\kappa$ which is $(\HOD,\mathcal{P}(\kappa))$-measurable, must be ineffable, and in fact totally indescribable (i.e. $\Pi^1_n$-indescribable for every $n<\omega$) in $\HOD$. They also show the consistency of being measurable (even supercompact) but not $(\HOD,\mathcal{P}(\kappa))$-measurable. Here we see that it straightforwardly follows from Proposition \ref{Prop: first point is not HOD-measurable}, indeed, we already observed that we can make a measurable (even a supercompact) cardinal be the the first point of disagreement point, which then cannot be $(\HOD,\mathcal{P}(\kappa))$-measurable.

\smallskip

The authors ask (see Question 2.9 in \cite{APTER2021102901}) whether there is a larger cardinal notion that can be derived from $(N,\mathcal{P}(\kappa))$-measurability. Let us show that we can for $N=\HOD$ (which is the main focus of \cite{APTER2021102901}): 
\begin{proposition}
    Suppose that $\kappa$ is $(\HOD,\mathcal{P}(\kappa))$-measurable, then $\kappa$ is completely ineffable\footnote{We will use the equivalent definition as presented in Benhamou-Gitman \cite[Cor. 5.7]{Benhamou_Gitman}: namely $\kappa$ is completely ineffable, if in a forcing extension there is a weakly amenable $V$-normal ultrafilter on $\kappa$.} in $\HOD$.
\end{proposition}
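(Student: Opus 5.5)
The plan is to use the footnoted characterization of complete ineffability: $\kappa$ is completely ineffable in $\HOD$ iff, in some forcing extension of $\HOD$, there is a $\HOD$-normal ultrafilter on $\kappa$ that is weakly amenable to $\HOD$. I will take $V$ itself (or a generic extension of $\HOD$ containing the relevant object) as that extension. Take $U$ the normal ultrafilter on $\kappa$ derived from an embedding $j\colon V\to M$ witnessing $(\HOD,\mathcal{P}(\kappa))$-measurability, and put $W:=U\cap\HOD$. The equivalent formulation quoted above already says that $W$ is a $\HOD$-normal measure, i.e.\ it is $\HOD$-$\kappa$-complete and closed under diagonal intersections of sequences lying in $\HOD$, and that it is weakly amenable to $\HOD$. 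I will reprove these directly from $j$: for $A\in\mathcal{P}(\kappa)\cap\HOD$ we have $A\in W$ iff $\kappa\in j(A)$. Normality follows from normality of $U$ in $V$, since sequences in $\HOD$ are in $V$.

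For weak amenability, let $\langle X_\xi\mid\xi<\kappa\rangle\in\HOD$ with each $X_\xi\subseteq\kappa$. Then $j(\vec X)\in j(\HOD)=\HOD^M$, and its restriction to $\kappa$ gives the sequence $\langle j(X_\xi)\mid \xi<\kappa\rangle$. The set
$$\{\xi<\kappa\mid \kappa\in j(\vec X)_\xi\}$$
is definable in $M$ from $j(\vec X)$ and $\kappa$ inside $\HOD^M$, so it is a subset of $\kappa$ in $\HOD^M$. Since $j(\HOD)\cap\mathcal{P}(\kappa)=\HOD\cap\mathcal{P}(\kappa)$ (this is where $j(\kappa)>\kappa$ and the agreement hypothesis are used, with $X=\mathcal{P}(\kappa)$), this set lies in $\HOD$. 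It equals $\{\xi\mid X_\xi\in W\}$, which proves weak amenability.

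The remaining issue, and the main obstacle, is that Benhamou--Gitman's criterion asks for the ultrafilter to exist in a \emph{forcing extension} of $\HOD$, whereas $V$ need not be a set-generic extension of $\HOD$. I would handle this with Vop\v{e}nka's theorem. Every set of ordinals is generic over $\HOD$ for the Vop\v{e}nka algebra. Code $W$ as a subset $a$ of $\mathcal{P}^{\HOD}(\kappa)$, which via a $\HOD$-bijection with an ordinal becomes a set of ordinals. Then $\HOD[a]$ is a set-forcing extension of $\HOD$ containing $W$. Weak amenability and $\HOD$-normality only refer to $W$ and $\HOD$, so they hold in $\HOD[a]$ as well. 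Thus $\kappa$ is completely ineffable in $\HOD$. If the Benhamou--Gitman criterion additionally requires $W$ to be an ultrafilter over $\mathcal{P}^{\HOD}(\kappa)$, this is immediate, because $U$ is an ultrafilter in $V$.
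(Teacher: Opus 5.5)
Your proposal is correct and follows the paper's route. You take the normal measure from the witnessing embedding, note that its trace on $\HOD$ is $\HOD$-normal and weakly amenable, use Vop\v{e}nka's theorem to place it (coded as a set of ordinals) in a set-generic extension of $\HOD$, and conclude by the Benhamou--Gitman characterization. The only difference is that you verify weak amenability directly from $j$ and the agreement $j(\HOD)\cap\mathcal{P}(\kappa)=\HOD\cap\mathcal{P}(\kappa)$, whereas the paper simply invokes the equivalent ultrafilter formulation of Apter--Friedman--Fuchs.
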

Before proving the proposition, let us mention that it is well known that completely ineffable cardinals are totally indescribable (see \cite{AbramsonHarringtonKleinbergZwicker:FlippingProperties} for more on completely ineffable cardinals).

\begin{proof}
   Assume that $\kappa$ is $(\HOD,\mathcal{P}(\kappa))$-measurable and let $U$ be an ultrafilter over $\kappa$, such that $U\cap\HOD$ is a normal weakly amenable $\HOD$-ultrafilter. By Vopenka's thoerem (see e.g. \cite[Theorem 6]{WoodinDavisRodriguez}) $U\in \HOD[G]$ for some generic filter $G$ over $\HOD$. 
We conclude that in a generic extension of $\HOD$, there is a $\HOD$-ultrafilter, $\HOD$-normal which is weakly amenable ultrafilter. By Benhamou-Gitman \cite[Cor. 5.7]{Benhamou_Gitman}, the existence of such an ultrafilter in a generic extension of $\HOD$ implies that $\kappa$ is completely ineffable in $\HOD$. 
\end{proof}
In fact we can deduce more--  $U$ is a normal measure in $V$ and therefore can be iterated. This is the work of Gitman \cite{gitman:ramsey} on the $\alpha$-iterability property.

\subsection{Singular cardinals of countable cofinality}
In this section we show that 
$\aleph_\omega$ can be the first point of disagreement between $\HOD$ and $V$. Our theorem stems from a question posed by Hayut in \href{https://mathoverflow.net/questions/201555/can-the-first-ordinal-in-which-v-neq-hod-be-aleph-omega}{MathOveflow} in 2015: 
\begin{question}[Hayut]
  \emph{Can $\aleph_\omega$ be the first cardinal where $V\neq \HOD$?}  
\end{question}

An affirmative answer was provided by M. Golshani on the very same post assuming the existence of a cardinal $\kappa$ that is $(\kappa+2)$-strong. Golshani conjectured that the same configuration could be obtained from the optimal assumptions. Here we prove Golshani's conjecture:
\begin{theorem}\label{thm: alephomega}
    The following theories are equiconsistent, modulo $\mathrm{ZFC}$:
    \begin{enumerate}
        \item ``There is an inner model with a  measurable cardinal".
        \item ``$\Delta(\HOD, V)=\aleph_{\omega}$".
    \end{enumerate}
\end{theorem}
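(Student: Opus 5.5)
Two directions. For (2)$\Rightarrow$(1), assume $\Delta(\HOD,V)=\aleph_\omega$ and that there is no inner model with a measurable cardinal. Then the Dodd--Jensen core model $K$ exists, is rigid, and satisfies the covering lemma. Since $K$ is definable without parameters, $K\subseteq\HOD$; indeed $K^{\HOD}=K$. The key input is the weak covering lemma: for singular $\lambda$ we have $\lambda^{+K}=\lambda^+$. Together with full covering this yields: every countable set of ordinals is covered by a set in $K$ of size $\aleph_1$.

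Now fix $X\subseteq\aleph_\omega$ with $X\notin\HOD$. Each $X\cap\aleph_n$ lies in $\HOD$ by the minimality of $\aleph_\omega$. By covering choose $Y\in K$ with $\{\aleph_n\mid n<\omega\}\subseteq Y\subseteq\aleph_\omega$ and $|Y|=\aleph_1$. The plan is to code $X$ in $\HOD$ from $\langle X\cap\aleph_n\mid n<\omega\rangle$, which we only know termwise. Each $X\cap\aleph_n$ is $\OD$ via some ordinal, so the sequence is coded by an $\omega$-sequence of ordinals below some $\theta$. Using $\mathrm{Ord}^\omega$ covering by $K$, the sequence $\langle \alpha_n\rangle$ of $\OD$-indices lies in some $Z\in K$ with $|Z|=\aleph_1<\aleph_\omega$. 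Enumerating $Z$ in $K\subseteq\HOD$ in order type $<\aleph_\omega$, the set $\{(n,i)\mid \alpha_n=z_i\}$ becomes a subset of $\omega\times\aleph_1$. That subset lies in $\HOD$ because $\aleph_1<\Delta$, so $X\in\HOD$, a contradiction. Hence in fact no inner model with a measurable is needed beyond this argument, and the main subtlety is uniformity: the $\OD$ indices must be chosen canonically, e.g.\ the least ordinal coding the definition, which is fine.

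For (1)$\Rightarrow$(2), start with $\kappa$ measurable in $L[U]$ and first force $V=\mathrm{gHOD}$ over $L[U]$ via McAloon's iteration, preserving measurability of $\kappa$ and GCH (use a coding above $\kappa$). Then force with Prikry forcing with interleaved collapses: conditions $(\langle\kappa_0,\dots,\kappa_{n-1}\rangle,A)$ plus collapses $\mathrm{Col}(\kappa_i^{+},{<}\kappa_{i+1})$, with guiding generics or measure-one sets of collapses so that $\kappa$ becomes $\aleph_\omega$ and stays strong limit. The ground model being $\mathrm{gHOD}$, every set in it remains in $\HOD$ of the extension. Cone homogeneity of the whole poset, which is OD in the ground, gives $\HOD^{V[G]}\subseteq V$. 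The Prikry sequence is not in $\HOD^{V[G]}$, so $\mathcal P(\aleph_\omega)\not\subseteq\HOD$.

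The main obstacle is showing that each subset of $\aleph_n$ in $V[G]$ lies in $\HOD^{V[G]}$, although subsets of $\aleph_n$ are added by the collapses. My plan is to precede the Prikry forcing with a further coding: arrange, via a modified Prikry/collapse forcing, that the finite initial segment of the generic below $\aleph_n$ is itself coded into the continuum pattern above $\kappa$. This would be done by taking the product with $\mathrm{Code}$ forcings indexed by the finite stem, using the Prikry property to see that bounded subsets depend only on a stem plus a collapse generic. I would try to make the bounded part definable by making $\HOD$ see the lower collapse generic through a coding forcing that is homogeneous only above the stem. This is the delicate point: balancing homogeneity, which keeps the full sequence out of $\HOD$, against coding, which puts its finite pieces in.
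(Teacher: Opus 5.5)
\textbf{Direction (2)$\Rightarrow$(1) is correct.} It is essentially the paper's argument. You cover the $\omega$-sequence of $\HOD$-ranks of the sets $X\cap\aleph_n$ by a set $S\in K^{\mathrm{DJ}}\subseteq\HOD$ of size $\aleph_1$. You then use the Mostowski collapse of $S$, which lies in $\HOD$, to turn this into a subset of an ordinal below $\aleph_\omega$, and recover $X$ from it.

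\textbf{Direction (1)$\Rightarrow$(2) has a genuine gap, and one step is false.} Prikry forcing with interleaved collapses $\mathrm{Col}(\kappa_i^{+},{<}\kappa_{i+1})$ is \emph{not} cone homogeneous, and its Prikry sequence \emph{is} in $\HOD^{V[G]}$. In $V[G]$ the only cardinals in $(\kappa_i,\kappa_{i+1}]$ are $\kappa_i^+$ and $\kappa_{i+1}$. Hence $\kappa_{i+1}=(\kappa_i^{++})^{V[G]}$, and the whole sequence is definable in $V[G]$ from the ordinal $\kappa_0$. Conditions with different stems force different cardinal structure, so their cones cannot be isomorphic, and your conclusion $\HOD^{V[G]}\subseteq V$ fails. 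So you have no witness to $\mathcal{P}(\aleph_\omega)\not\subseteq\HOD$ at all. The obstacle you flag yourself (the collapse generics lie outside $\HOD$) also remains unresolved. The stem-by-stem coding you sketch does not repair it: any automorphism that moves stems also moves the bounded collapse information, and with it the name of such a coding poset. Moreover, coding \emph{before} the Prikry forcing cannot capture generic objects that do not yet exist.

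\textbf{The missing idea: add a second family of Prikry points that leaves no trace in the cardinal structure.} In the paper, points are added in pairs $(\rho,\sigma)$. The collapses $\mathrm{Col}(\kappa_{2i}^{++},{<}\kappa_{2i+2})$ depend only on the even points, so each odd point is simply collapsed. Let $\pi$ be the map forgetting the odd points; it is a projection onto a Prikry-type poset $\mathbb{Q}$.
\begin{itemize}
\item By the factoring property, $V[\pi[G]]$ has the same bounded subsets of $\kappa=\aleph_\omega$ as $V[G]$.
\item The paper's homogeneity lemma provides isomorphisms that swap odd points while fixing $\pi[G]$. These give $\HOD^{V[G]}_x\subseteq V[\pi[G]]$ for every $x\in V[\pi[G]]$, and show $\{\kappa_{2n+1}\mid n<\omega\}\notin V[\pi[G]]$.
\item Only \emph{after} the Prikry forcing does one code a single set of ordinals $x$ with $V[x]=V[\pi[G]]$ into the continuum function above $\aleph_\omega$, via $\mathrm{Code}(x,\aleph_\omega)$.
\end{itemize}
The ground satisfies $V=\mathrm{gHOD}$, and the coding poset is homogeneous, $\OD_x$, and adds no subsets of $\aleph_\omega$. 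Therefore $\HOD^{V[G][c]}=V[\pi[G]]$, which contains every bounded subset of $\aleph_\omega$ but omits the odd Prikry points. This gives $\Delta(\HOD,V)=\aleph_\omega$.
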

The implication (2) $\Rightarrow$ (1) is an easy application of the Dodd--Jensen covering lemma:
\begin{lemma}
 If  $\Delta(\HOD, V)=\aleph_{\omega}$ then there is an inner model with a measurable cardinal.  
\end{lemma}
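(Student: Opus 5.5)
Assume $\Delta(\HOD,V)=\aleph_\omega$ and, towards a contradiction, that there is no inner model with a measurable cardinal. The plan is to use the Dodd--Jensen covering lemma for the core model $K=K^{DJ}$ to show that every subset of $\aleph_\omega$ already lies in $\HOD$.

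First, $K\subseteq \HOD$. Indeed, $K$ is a definable class, namely the union of all mice, and each mouse is coded by a set of ordinals of size less than $\aleph_\omega$. More directly, $K$ is uniformly definable without parameters and is well-ordered by a definable order, so every element of $K$ is ordinal definable; hence $K\subseteq\HOD$.

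Second, apply the Dodd--Jensen covering lemma. Since $0^\dagger$ fails (there is not even an inner model with a measurable), for every set of ordinals $X$ there is $Y\in K$ with $X\subseteq Y$ and $|Y|\le |X|+\aleph_1$. In particular $\aleph_\omega$ is singular in $K$, because cofinality is preserved in the relevant sense, and $K$ computes $\aleph_\omega^+$ correctly.

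Now take $A\subseteq\aleph_\omega$ and fix a cofinal sequence $\langle \aleph_n\mid n<\omega\rangle$, which is in $\HOD$. Each $A\cap\aleph_n$ is in $\HOD$ because $\aleph_n<\Delta$. The difficulty is to assemble these countably many pieces inside $\HOD$. Consider the countable set $C=\{\xi_n\mid n<\omega\}$ of ordinals, where $\xi_n$ is the index of $A\cap\aleph_n$ in the canonical $\HOD$ well-order of $\mathcal{P}(\aleph_n)\cap\HOD$ (this order is definable in $V$). By covering, $C\subseteq Y$ for some $Y\in K\subseteq\HOD$ with $|Y|\le\aleph_1$. Inside $\HOD$, let $\pi\colon \mathrm{otp}(Y)\to Y$ be the increasing enumeration. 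Then $C$ corresponds to the set $\pi^{-1}[C]$, which is a subset of an ordinal of $V$-cardinality at most $\aleph_1<\aleph_\omega$. Hence $\pi^{-1}[C]\in\HOD$ by Proposition~\ref{prop: basic coding}, and therefore $C\in\HOD$.

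With $C$ in $\HOD$, the sequence $\langle A\cap\aleph_n\mid n<\omega\rangle$ is recovered in $\HOD$ from $C$ and the $\HOD$ well-orders. Consequently $A=\bigcup_n (A\cap\aleph_n)\in\HOD$. Since $A$ was arbitrary, $\mathcal{P}(\aleph_\omega)\subseteq\HOD$, contradicting $\Delta(\HOD,V)=\aleph_\omega$.

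The main care point is the coding step: one must check that $\xi_n$ is well defined, i.e. that $A\cap\aleph_n$ has an index in the $\HOD$ order of $\mathcal{P}(\aleph_n)\cap \HOD$, and then transfer the countable $C$ into $\HOD$ via the covering set $Y$. The remaining ingredients, $K\subseteq \HOD$ and Dodd--Jensen covering under the failure of an inner model with a measurable, are standard.
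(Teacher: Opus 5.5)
Your proof is the paper's proof: cover the countable set of indices of the pieces $A\cap\aleph_n$ by some $Y\in\mathcal{K}^{\mathrm{DJ}}\subseteq\HOD$ of size at most $\aleph_1$, collapse $Y$ to an ordinal below $\aleph_2$, and pull back. One step does not go through as written, namely recovering $A$ from $C$.

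\textbf{The gap.} You index $A\cap\aleph_n$ in the well-order of $\mathcal{P}(\aleph_n)\cap\HOD$, which is a different well-order for each $n$. You then put only the \emph{set} $C=\{\xi_n\mid n<\omega\}$ into $\HOD$, and this loses the information of which ordinal goes with which $n$. An ordinal $\xi\in C$ that indexes $A\cap\aleph_m$ at level $m$ will in general index some unrelated subset of $\aleph_n$ at level $n\neq m$, and $\xi_n=\xi_m$ may occur for $n\neq m$. Knowing only $C$, $\HOD$ sees many functions $g\colon\omega\to C$. Nothing in your argument rules out several of them producing coherent sequences with different unions. So ``the sequence is recovered from $C$'' is not justified: what you need in $\HOD$ is the function $n\mapsto\xi_n$.

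\textbf{The fix.} Either of the following works.
\begin{itemize}
\item Run the same covering-and-collapse argument on the countable set of ordinals $\{\Gamma(n,\xi_n)\mid n<\omega\}$, where $\Gamma$ is G\"odel's pairing function.
\item Do what the paper does: let $\rho_n$ be the rank of $A\cap\aleph_n$ in the \emph{global} canonical well-order of $\HOD$. Then distinct ordinals name distinct sets, and each ordinal in $R=\{\rho_n\mid n<\omega\}$ names one of the pieces $A\cap\aleph_n$. Hence
\[
A=\bigcup\{\text{the $\rho$-th element of }\HOD\mid\rho\in R\}
\]
is ordinal definable from $R$, with no need to know which $\rho$ goes with which $n$.
\end{itemize}

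\textbf{A smaller point.} Your first justification of $\mathcal{K}^{\mathrm{DJ}}\subseteq\HOD$ is false: mice can be arbitrarily large, so they are not all coded by sets of ordinals of size less than $\aleph_\omega$. Your second justification is the correct one: $\mathcal{K}^{\mathrm{DJ}}$ is definable without parameters and carries a definable well-order.
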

\begin{proof}
    Suppose for the sake of a contradiction that  there is no inner model with a measurable cardinal yet $\aleph_\omega$ is the first cardinal $\lambda$ with $\mathcal{P}(\lambda)\nsubseteq \HOD$.

    Let $X\in \mathcal{P}(\aleph_\omega)$ be an offending set. For each $n<\omega$, $X\cap \aleph_n \in \HOD$. 

    For each $n<\omega$ let $\rho_n$ be the rank of $X\cap \aleph_n$ in the canonical well-order of $\HOD$. Consider $R= \{\rho_n\mid n<\omega\}$. By the Dodd--Jensen Covering Theorem (cf. \cite{MitchellCoveringHandbook}) there is $S\in \mathcal{K}^{\mathrm{DJ}}\s \HOD$ such that $R\s S$ with $|S|\leq \aleph_1$. 

    Let $\alpha:=\mathrm{otp}(S)$ and $\pi \colon S \to \alpha$ be the unique isomorphism; namely, $\pi$ is just the Mostowski collapse map. Since $S\in \HOD$, by absoluteness of the Mostowski collapse,
     $\pi\in \HOD$. Finally, $\pi[R]\in \HOD$ --in that a subset of $\alpha < \aleph_\omega=\Delta(\HOD, V)$-- so that $R\in \HOD$ as well.

From $R$ we can uniformly define $X$ using ordinal parameters.
\end{proof}

Let us next show $(1)\Rightarrow (2)$. Our  proof will feature  ideas similar to those used by Magidor in his classical paper \cite{MagOnTheSingI}, or those used by Woodin in his (unpublished) construction of an $\aleph_\omega$-indecomposable ultrafilter.

\smallskip

 Assume that $\kappa$ is a measurable cardinal such that $2^\kappa=\kappa^+$. Let $\mathcal{U}$ be a $\kappa$-complete normal ultrafilter and $j\colon V\to M$  the ultrapower elementary embedding by $\mathcal{U}$. Standard arguments (see Cummings \cite[\S8]{CummingsHandbook}) give a $M$-generic filter $K\in V$  for the poset $\big(\mathrm{Col}((\kappa^{++})^{M},{<}j(\kappa))\big)^{M}$. 
 

\begin{notation}
   Given a set $X\s \mathrm{Ord}$ and $n<\omega$, 
   $$X^n:=\{(\alpha_0,\dots, \alpha_{n-1})\in \underbrace{X\times\cdots \times X}_{n\, \text{times}}\mid \alpha_0<\dots<\alpha_{n-1}\}.$$
\end{notation}

\smallskip

Our variant of Prikry forcing with collapses is defined as follows: 
\begin{definition}\label{def: Prikryforcing}
    Let $\mathbb{P}:=\mathbb{P}(\mathcal{U},K)$ be the poset whose conditions are
    $$p=(\vec{\kappa}_p,\vec{c}_p, A_p, h_p, H_p)$$
   for which the following properties hold:
    \begin{enumerate}
        \item $\vec{\kappa}_p=\langle \kappa_i\mid i< \ell(p)\rangle$ is a finite increasing sequence  of inaccessible cardinals less than $\kappa$ and $\ell(p)<\omega$ is an odd number.
        \item $\vec{c}_p=\langle c_i\mid -1\leq i\leq m(p)\rangle$ is a finite sequence of functions such that: 
        \begin{enumerate}
        \item $m(p):=\mathrm{floor}\left(\frac{\ell(p)-1}{2}\right)-1$.\footnote{Here $\mathrm{floor}(x)$ denotes the function giving the integer part of $x$.}
            \item  $c_{-1}\in \mathrm{Col}(\omega_2,{<}\kappa_0)$;
            \item  $c_i\in \mathrm{Col}(\kappa_{2  i}^{++}, {<}\kappa_{2  (i+1)})$ for $i \leq  m(p)$,
        \end{enumerate}

\item $A_p\in \mathcal{U}$ consists of inaccessible cardinals and   $$\min(A_p)>\max\{\kappa^p_{\ell(p)-1},\sup(\mathrm{range}(h_p))\}.$$
\item $h_p\in  \mathrm{Col}(\kappa_{\ell(p)-1}^{++},{<}\kappa).$
\item $H^p$ is a function with domain $A_p$ such that $$H_p(\beta)\in \mathrm{Col}(\beta^{++},{<}\kappa)$$ and $j(H)(\kappa)\in K$.
    \end{enumerate}
Given $p\in \mathbb{P}$ we shall refer to  $(\vec{\kappa}_p,\vec{c}_p)$ as the \emph{stem of $p$} and to $\ell(p)$  as the \emph{length of $p$}. When the context allows it, we will tend to omit the sub and superscripts on the components of the condition $p$.

\smallskip

    For conditions $p,q\in \mathbb{P}$ we will write $q\leq^* p$ whenever the following hold:
    \begin{enumerate}
        \item $\ell(p)= \ell(q)$;
        \item $\langle \kappa^q_i\mid i<\ell(p)\rangle = \langle \kappa^p_i\mid i<\ell(p)\rangle$;
        \item For each $-1\leq i\leq m(p)$, $c^p_i\s c^q_i$;
        \item $A_q\s A_p$;
        \item $h^p\s h^q$;
        
        \item $H^p(\beta)\s H^q(\beta)$ for all $\beta\in A_q$.
    \end{enumerate}
\end{definition}

\begin{definition}
    Given $p\in \mathbb{P}$  and $(\rho,\sigma)\in A^2$. Define
    $$p\cat (\rho, \sigma):=(\vec{\kappa}\,{}^\smallfrown \langle \rho,\sigma\rangle, \vec{c}\,{}^\smallfrown\langle h\rangle, A^*, H(\sigma), H^*)$$
     where we have set 
 \begin{itemize}
     \item $A^*:=A\setminus \max\{\sigma+1,\sup(\mathrm{range}(H(\sigma))\},$
     \item and $H^*:=H\restriction A^*$.
 \end{itemize}
 Given a finite sequence $\langle (\rho_i,\sigma_i)\mid i<n\rangle$ of members of $A^2$ such that  $\sigma_i<\rho_{i+1}$ for all $i+1<n$, we define $p\cat \langle (\rho_i,\sigma_i)\mid i<n\rangle$ by recursion of $i<n$.
 \end{definition}

 The following proposition is routine:
 \begin{proposition}
    For each $p\in \mathbb{P}$ and a sequence $\langle (\rho_i,\sigma_i)\mid i<n\rangle$  as before, the vector $p\cat \langle (\rho_i,\sigma_i)\mid i<n\rangle$ is a condition in $\mathbb{P}$.\qed
 \end{proposition}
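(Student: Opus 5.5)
We verify the clauses of Definition~\ref{def: Prikryforcing} by induction on $n$. The case $n=0$ is trivial, and the recursive definition of $p\cat\langle(\rho_i,\sigma_i)\mid i<n\rangle$ reduces everything to the one-step case $n=1$. So it suffices to show that $q:=p\cat(\rho,\sigma)$ is a condition whenever $p\in\mathbb{P}$ and $(\rho,\sigma)\in A_p^2$. For the induction step we must also check that the hypothesis $\sigma_i<\rho_{i+1}$, together with membership in the shrunken measure-one set, makes the next pair lie in $A_{q}^2$. This is immediate once we know $A_q=A_p\setminus\max\{\sigma+1,\sup\mathrm{range}(H_p(\sigma))\}$, provided the later pairs are drawn from $A_q$. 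We read the hypothesis this way, as the recursion implicitly requires.

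\textbf{Stem clauses.} The new sequence is $\vec\kappa_q=\vec\kappa_p{}^\smallfrown\langle\rho,\sigma\rangle$. It is increasing because $\rho<\sigma$ and $\rho\ge\min A_p>\kappa^p_{\ell(p)-1}$. Its entries are inaccessible because $A_p$ consists of inaccessibles. Its length $\ell(p)+2$ is still odd, and $m(q)=m(p)+1$. The new collapse entry is $c_{m(q)}=h_p$, and it must lie in $\mathrm{Col}(\kappa_{2m(q)}^{++},{<}\kappa_{2(m(q)+1)})$. With $\ell(p)=2k+1$ we have $m(q)=k$, $\kappa_{2k}=\kappa^p_{\ell(p)-1}$ and $\kappa_{2k+2}=\sigma$. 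Now $h_p\in\mathrm{Col}(\kappa_{\ell(p)-1}^{++},{<}\kappa)$, and $\sup\mathrm{range}(h_p)<\min A_p\le\rho<\sigma$. Since the domain of $h_p$ is a set of pairs $(\alpha,\xi)$ with $\xi<\alpha$-values bounded by the range, we reinterpret $h_p$ so that it lands in $\mathrm{Col}(\kappa_{\ell(p)-1}^{++},{<}\sigma)$. This is the main bookkeeping subtlety of the whole verification: one has to check that the bound on the range really bounds the relevant coordinates of the domain of a Lévy-collapse condition. If it does not, the definition should be read as using the restriction $h_p\restriction(\sigma\times\kappa^{++}_{\ell(p)-1})$, which is again a condition.

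\textbf{Upper part.} The new $h_q=H_p(\sigma)$ lies in $\mathrm{Col}(\sigma^{++},{<}\kappa)=\mathrm{Col}(\kappa^{q\,++}_{\ell(q)-1},{<}\kappa)$, as clause (4) requires. The set $A_q$ is $A_p$ minus a bounded subset of $\kappa$. By $\kappa$-completeness and uniformity of $\mathcal U$ we get $A_q\in\mathcal U$, and $A_q$ still consists of inaccessibles. By construction $\min A_q>\max\{\sigma,\sup\mathrm{range}(h_q)\}$, which is clause (3). Finally $H_q=H_p\restriction A_q$ takes values $H_q(\beta)\in\mathrm{Col}(\beta^{++},{<}\kappa)$. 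We also have $j(H_q)(\kappa)=j(H_p)(\kappa)\in K$, because $\kappa\in j(A_q)$, which holds since $A_q\in\mathcal U$. This completes the one-step case, and the induction gives the proposition.
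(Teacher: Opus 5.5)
Your clause-by-clause check is the routine verification the paper leaves to the reader (it gives no proof). Your treatment of the stem, of $h_q=H_p(\sigma)$, of $A_q\in\mathcal U$, and of $j(H_q)(\kappa)=j(H_p)(\kappa)\in K$ is fine.

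The step that is not established is clause (2)(c) for the new entry $c_{m(q)}=h_p$. A bound on the \emph{values} of a L\'evy collapse condition gives no bound on its \emph{coordinates}. Consider a condition of $\mathrm{Col}(\kappa^{p\,++}_{\ell(p)-1},{<}\kappa)$ whose only coordinate is some $\alpha>\sigma$ and whose only value is $0$. It has $\sup\mathrm{range}=0$, yet it is not in $\mathrm{Col}(\kappa^{p\,++}_{\ell(p)-1},{<}\sigma)$. So under the literal reading the proposition is false. What must be reinterpreted is clause (3), not $h_p$. Your fallback of truncating $h_p$ changes the operation $p\cat(\rho,\sigma)$, so it does not verify the one the paper defines.

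The intended reading is that $\sup(\mathrm{range}(h_p))$ in clause (3), and likewise $\sup(\mathrm{range}(H(\sigma)))$ in the definition of $A^*$, means the supremum of the support $\{\alpha<\kappa\mid \exists\xi\,((\alpha,\xi)\in\dom(h_p))\}$. This is the standard requirement in Prikry forcing with interleaved collapses. With it, $\mathrm{supp}(h_p)\subseteq\min A_p\le\rho<\sigma$ gives $h_p\in\mathrm{Col}(\kappa^{p\,++}_{\ell(p)-1},{<}\sigma)$ outright. Also, since $|H_p(\sigma)|<\sigma^{++}<\kappa$ and $\kappa$ is regular, the support of $h_q$ is bounded in $\kappa$. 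That is what makes $A_q$ co-bounded in $A_p$.

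There are two smaller points. First, let $s=\sup\mathrm{supp}(H_p(\sigma))$. Removing $\max\{\sigma+1,s\}$ only yields $\min A_q\ge s$, so the strict inequality in clause (3) is not ``by construction''. One should remove $\max\{\sigma,s\}+1$; this is a harmless off-by-one inherited from the definition.

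Second, your remark that the side condition $\sigma_i<\rho_{i+1}$ must be strengthened is correct and genuinely needed. The right condition is that $(\rho_{i+1},\sigma_{i+1})$ lies in the measure-one set of $p\cat\langle(\rho_l,\sigma_l)\mid l\le i\rangle$. With only $\sigma_0<\rho_1$, $\sigma_1$ may lie below the support of $H_p(\sigma_0)$, or outside $\dom(H_{p\cat(\rho_0,\sigma_0)})$. In that case the second step fails clause (2)(c). The paper glosses over this point, both here and in the definition of $\leq$.
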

 \begin{definition}
     Given conditions $p,q\in \mathbb{P}$ we shall write $q\leq p$ if there is a finite sequence $\langle (\rho_i,\sigma_i)\mid i<n\rangle$ of members of $(A_p)^2$ with  $\sigma_i<\rho_{i+1}$ for all $i+1<n$, such that $q\leq^* p\cat \langle (\rho_i,\sigma_i)\mid i<n\rangle$.
 \end{definition}
  Clause~(5) in Definition~\ref{def: Prikryforcing} ensures that any two conditions $p,q\in \mathbb{P}$ with the same stem are $\leq^*$-compatible. As a result we have the following:
\begin{lemma}
   $\mathbb{P}$ has the $\kappa^{+}$-cc.\qed
\end{lemma}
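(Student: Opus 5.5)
The plan is a standard $\Delta$-system/pigeonhole argument on stems. Suppose towards a contradiction that $\mathcal{A}\subseteq\mathbb{P}$ is an antichain of size $\kappa^+$. The point is to show that two members of $\mathcal{A}$ share a stem. By Clause~(5) of Definition~\ref{def: Prikryforcing}, two such conditions are $\leq^*$-compatible, and hence compatible in $\mathbb{P}$, which gives the contradiction.

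First I will count the stems. A stem is a pair $(\vec{\kappa}_p,\vec{c}_p)$, where $\vec{\kappa}_p$ is a finite increasing sequence of inaccessibles below $\kappa$, and $\vec{c}_p$ is a finite sequence of conditions in collapses of the form $\mathrm{Col}(\omega_2,{<}\kappa_0)$ and $\mathrm{Col}(\kappa_{2i}^{++},{<}\kappa_{2(i+1)})$. Each such collapse poset has size $<\kappa$, because $\kappa$ is inaccessible and every bound $\kappa_{2(i+1)}$ is below $\kappa$. Hence the set of all stems has size $\kappa$. The same count covers the component $h_p\in\mathrm{Col}(\kappa_{\ell(p)-1}^{++},{<}\kappa)$ if one wants it, though it is not needed. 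By pigeonhole, among the $\kappa^+$ conditions in $\mathcal{A}$ there are two distinct ones, $p$ and $q$, with the same stem.

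Next I will verify compatibility directly rather than only citing Clause~(5). Let $p,q$ have the same stem. Put $A:=A_p\cap A_q\in\mathcal{U}$. Both $j(H_p)(\kappa)$ and $j(H_q)(\kappa)$ lie in the filter $K$, so they are compatible in $j(\mathrm{Col})$. By Łoś, the set of $\beta$ for which $H_p(\beta)$ and $H_q(\beta)$ are compatible is in $\mathcal{U}$, so I shrink $A$ to that set and set $H(\beta):=H_p(\beta)\cup H_q(\beta)$. Then $j(H)(\kappa)=j(H_p)(\kappa)\cup j(H_q)(\kappa)\in K$, using that $K$ is a filter.

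The $h$-parts are more delicate, since $h_p$ and $h_q$ both live in $\mathrm{Col}(\kappa_{\ell-1}^{++},{<}\kappa)$ and may be incompatible. This is the main obstacle: if they are incompatible, two conditions with the same stem can still conflict, so Clause~(5) does not handle it by itself. The fix is to refine the pigeonhole. Since $|\mathrm{Col}(\kappa_{\ell-1}^{++},{<}\kappa)|=\kappa$ (the conditions are bounded functions, and $\kappa$ is inaccessible), I count triples $(\vec{\kappa},\vec{c},h)$. There are only $\kappa$ of these, so among $\kappa^+$ conditions two agree on the stem and also on $h$. For such $p$ and $q$, take $h:=h_p=h_q$, the intersected measure-one set $A$ (further shrunk above $\sup\mathrm{range}(h)$), and the amalgamated $H$. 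This yields a common $\leq^*$-extension of $p$ and $q$, contradicting that $\mathcal{A}$ is an antichain.
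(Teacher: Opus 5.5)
Your argument is correct and is essentially the paper's: pigeonhole among the $\kappa$ many possible stems, then amalgamate the measure-one sets and the $H$-parts using Clause~(5) and the fact that $K$ is a filter. Your extra care with $h_p$ is warranted. The paper's remark that two conditions with the same stem $(\vec{\kappa}_p,\vec{c}_p)$ are $\leq^*$-compatible overlooks that $h_p$ and $h_q$ may be incompatible; in that case $p\perp q$, since $h_p$ survives in every extension of $p$, either as the $h$-part or as the new top collapse coordinate. Pigeonholing on the triples $(\vec{\kappa}_p,\vec{c}_p,h_p)$, of which there are still only $\kappa$ many, repairs this at no cost.
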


To control the cardinal structure below $\kappa$, one needs the \emph{Prikry property}, whose proof is standard, and we therefore omit it. The reader interested in the details may consult \cite[Lemma~4.11]{NonGalvinFil}, where a very similar argument is given in full detail:
\begin{lemma}
Given  $p\in \mathbb{P}$ and $\varphi$ a sentence in the language of forcing of $\mathbb{P}$ there is $q\leq^* p$ such that either $q\forces_{\mathbb{P}}\varphi$ or $q\forces_{\mathbb{P}}\neg \varphi$.\qed
\end{lemma}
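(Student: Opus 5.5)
We prove the Prikry property for $\mathbb{P}(\mathcal{U},K)$ by the usual two-stage argument for Prikry forcing with collapses. First we decide $\varphi$ "over a fixed lower part" and diagonalize over the measure-one set. Then we use normality of $\mathcal{U}$, together with the genericity of $K$, to make the decision independent of the new Prikry points. Fix $p=(\vec\kappa,\vec c,A,h,H)$ and $\varphi$.

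\emph{Step 1 (handling the lower collapses).} The parts $\vec c$ and $h$ live in the collapses below $\kappa$: $h\in\mathrm{Col}(\kappa_{\ell(p)-1}^{++},{<}\kappa)$ and the $c_i$ in small collapses. The part $\vec c$ ranges over a set of size $<\kappa_{\ell(p)-1}^{++}$, while the direct-extension order on $h$ is $\kappa_{\ell(p)-1}^{++}$-closed.
\begin{itemize}
\item[] First attempt to decide using $\leq^*$-extensions which strengthen only $h$, $A$ and $H$, keeping $\vec c$ fixed.
\item[] Handle the possible strengthenings of $\vec c$ by the standard trick: enumerate the $<\kappa_{\ell(p)-1}^{++}$ many candidates $\vec d\le\vec c$ and build a $\leq^*$-decreasing sequence of upper parts, one step per candidate, using closure.
\item[] At each stage, if some upper part together with $\vec d$ decides $\varphi$, take it.
\end{itemize}
This reduces the problem to: if some $q\le^*p$ decides $\varphi$, then one can be found with the same $\vec c$, after the usual density argument showing the decision does not depend on $\vec d$. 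We will in fact prove that some direct extension decides.

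\emph{Step 2 (the main combinatorial step).} For each finite sequence $\vec s=\langle(\rho_i,\sigma_i)\mid i<n\rangle$ from $A$, and each possible lower collapse part $t$ that appears after extending by $\vec s$, ask whether some direct extension of $p\cat\vec s$ with lower part $t$ forces $\varphi$, forces $\neg\varphi$, or neither. Record the answer and a witness.

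The witness's upper part has two components:
\begin{itemize}
\item[] a collapse $h'\in\mathrm{Col}(\sigma_{n-1}^{++},{<}\kappa)$;
\item[] functions $H'$ on measure-one sets.
\end{itemize}
Amalgamate them as follows:
\begin{itemize}
\item[] take the diagonal intersection of the measure-one sets;
\item[] define $H^*(\beta)$ as a lower bound of the $H'$ values over all $\vec s,t$ below $\beta$. There are $<\beta^{++}$ many such, because each $\sigma\in A$ is inaccessible and the collapse is $\beta^{++}$-closed.
\end{itemize}

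The crucial point is to check that $j(H^*)(\kappa)\in K$. We have $j(H^*)(\kappa)$ as a lower bound of $\kappa^+$-many conditions $j(H'_{\vec s,t})(\kappa)$, all lying in $K$. Since $K$ is generic over $M$ for a $\kappa^{++}$-closed (in $M$) poset, and $M^\kappa\subseteq M$, those conditions form a sequence in $M$ whose members are in $K$. Hence a lower bound lies in $K$, and we can choose $H^*$ so that $j(H^*)(\kappa)$ is that lower bound. Concretely, define $H^*$ by picking, via $\mathcal{U}$-representatives, lower bounds that are eventually in $K$. Arranging this is the step I expect to be the main obstacle.

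\emph{Step 3 (homogenizing and concluding).} With the amalgamated condition $p^*$ in hand, color $A^{2n}$ (together with the relevant lower-part data) by the three answers. By Rowbottom's theorem for the normal measure $\mathcal{U}$, shrink $A$ to be homogeneous for each $n$. Also use $j$ and $K$ to make the $h$-components of the witnesses coherent: their $j$-images at $\kappa$ are compatible with the generic, so one fixed $h^*$ works on a measure-one set.

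Now suppose $r\le p^*$ forces $\varphi$, with $r$ of minimal extension length $n$. Then $r$ is a direct extension of $p^*\cat\vec s$ for some $\vec s$. By homogeneity, $p^*\cat\vec s'$ forces $\varphi$ for every $\vec s'$ from $A^*$. The set of such $p^*\cat\vec s'$ is predense below $p^*$ for $\mathcal{U}$-large choices. If $n>0$, this lets us drop one pair, contradicting minimality. Hence $n=0$ and $p^*$ itself decides $\varphi$.

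Finally, the sentences $\varphi$ and $\neg\varphi$ cannot both be decided below $p^*$ with incompatible homogeneous colors. This is because the densely many extensions of $p^*$ are all of the form $p^*\cat\vec s$ strengthened, which yields $q:=p^*\le^*p$ as required.
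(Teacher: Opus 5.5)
Your Step~3 is where the argument breaks, and it cannot be repaired: for $\mathbb{P}$ as literally defined, the statement is false.

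The odd point $\rho$ of a new pair $(\rho,\sigma)$ is a coordinate of the collapse $\mathrm{Col}(\kappa_{\ell(p)-1}^{++},{<}\sigma)$ that forms the new lower part of $p\cat{(\rho,\sigma)}$. That lower part is only required to extend $h_p$. Concretely, let $\ell(p)=2k+1$, let $\dot C_k$ be the canonical name for the generic for the collapse between $\kappa_{2k}^{++}$ and $\kappa_{2k+2}$, and let $\varphi$ say that some $c\in\dot C_k$ has $c(\dot\kappa_{2k+1},0)=0$. Given any $q\le^* p$, pick $(\rho,\sigma)\in A_q^2$ with $\rho$ above the support of $h_q$. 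Adding $(\rho,0)\mapsto 0$ to the new collapse component of $q\cat{(\rho,\sigma)}$ gives an extension of $q$ forcing $\varphi$; adding $(\rho,0)\mapsto 1$ gives one forcing $\neg\varphi$. So no direct extension of $p$ decides $\varphi$. (The same freedom shows that $n\mapsto(\bigcup C_n)(\kappa_{2n+1},0)$ maps $\omega$ onto $\kappa$ in $V[G]$, so the definition of $\mathbb{P}$ itself needs repair.)

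In terms of your proof, three things fail.
\begin{itemize}
\item The answer attached to $(\vec s,t)$ depends on what $t$ does at the coordinate $\rho_{n-1}$, which moves with $\vec s$. So ``coloring $A^{2n}$ together with the lower-part data'' is not a partition into fewer than $\kappa$ pieces, Rowbottom does not apply, and no shrinking of $A$ removes the dependence.
\item ``$p^*\cat{\vec s'}$ forces $\varphi$'' is not what you would get, since the decision is made by $t$ together with the upper part.
\item ``Dropping a pair'' fails. If you reread the top component of $t$ as a strengthening of the $h$-part of the shorter condition, the one-pair extensions of that condition have their odd point above the support of this component. Whether they force $\varphi$ then depends on a fresh coordinate about which the component says nothing.
\end{itemize}
Even where such rereading does work (one new point at a time, lower parts supported below it), the homogenization is not Rowbottom. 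It is a normality argument: for $\mathcal U$-almost all $\sigma$, the decision function $d_\sigma$ on lower parts agrees with the restriction of the single function $j(\langle d_\sigma\rangle_\sigma)(\kappa)$.

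What Steps~1--2 can legitimately give is a weaker property. There is $p^*\le^* p$ such that whenever $r\le^* p^*\cat{\vec s}$ decides $\varphi$, the Prikry points and lower collapses of $r$, together with the upper part of $p^*\cat{\vec s}$, decide it the same way. Even for this, the step you flagged is a real gap.

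Your lower-bound argument handles only the $H'$-components of the witnesses. Those satisfy $j(H')(\kappa)\in K$ because the witnesses are conditions. There are only $\kappa$ many pairs $(\vec s,t)$, not $\kappa^+$; with $\kappa^+$, ${}^\kappa M\subseteq M$ would not apply. So their $j$-images do have a lower bound in $K$.

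The top-collapse components $h'\in\mathrm{Col}(\sigma^{++},{<}\kappa)$ of the witnesses with last point $\sigma$ must be absorbed into $H^*(\sigma)$, and these are arbitrary. Even after making them coherent at each $\sigma$ by a recursion of length $\sigma$, nothing makes the resulting $e$ satisfy $j(e)(\kappa)\in K$. So ``their $j$-images at $\kappa$ are compatible with the generic'' is unjustified. Also, there is no single $h^*$ to fix, since these conditions live in different posets.

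The missing idea is to use that $K$ is $M$-generic, not merely closed:
\begin{itemize}
\item Let $E_\sigma$ be the set of $e\le H(\sigma)$ such that, for each relevant $(\vec s,t)$ with last point $\sigma$, either $t$, $e$ and some upper part decide $\varphi$, or no strengthening of $e$ does.
\item The length-$\sigma$ recursion shows $E_\sigma$ is dense open below $H(\sigma)$.
\item Hence $K$ meets $j(\langle E_\sigma\rangle_\sigma)(\kappa)$ below $j(H)(\kappa)$.
\item For a representative $e$ of such a condition, \L{}o\'s gives $e(\sigma)\in E_\sigma$ almost everywhere.
\end{itemize}

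Finally, Step~1's claim that the decision does not depend on $\vec d$ is false: take $\varphi$ about the $\mathrm{Col}(\omega_2,{<}\kappa_0)$-generic. So the conclusion $q:=p^*$, with its original $\vec c$, is wrong in general. At best $q$ is the length-$0$ witness $r$, whose lower part may be stronger.
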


 The poset  $\mathbb{P}$ possess the following factoring property that ensures that every bounded subset of $\kappa$ belongs to a generic extension of by the interwoven Levy collapses. More precisely, for each  $p\in \mathbb{P}$, 
$$ \mathbb{P}/p\simeq \mathrm{Coll}(\omega_2,{<}\kappa_0)/c^p_{-1}\times \left(\prod_{i\leq m(p)}(\mathrm{Coll}((\kappa^p_{2i})^{++},{<}\kappa^p_{2(i+1)})/c^p_i)\right)\times \mathbb{P}(p),$$
where $\langle \mathbb{P}(p),\leq, \leq^*\rangle$ is a Prikry-type subposet of $\langle \mathbb{P},\leq, \leq^*\rangle$ whose  $\leq^*$-order is $\kappa_{2  m(p)}^{++}$-closed. For details about how this  property helps to show that every bounded subset of $\kappa$ in the generic extension is added by a product of this Levy collapses see for instance \cite[Lemma~3.15]{PartIII}.

\smallskip

Let $G\s \mathbb{P}$ be a  generic filter. As usual, $G$ induces a  sequence $\langle \kappa_n\mid n<\omega\rangle$ cofinal in $\kappa$, together with a  sequence $\langle C_n\mid n\geq -1\rangle $ of generic filters $C_{-1}\s \mathrm{Coll}(\omega_2,{<}\kappa_0)$ and $C_n\s \mathrm{Coll}(\kappa_{2n}^{++}, {<}\kappa_{2(n+1)})$, respectively. Combining the previous factoring, the Prikry property  and the closure of the $\leq^*$-ordering one can show that the only $V[G]$-cardinals below $\kappa$ are $$\textstyle \{\aleph_0,\aleph_1^V, \aleph_2^V\}\cup \{\kappa\}\cup \bigcup_{n<\omega}\{\kappa_{2n},\kappa_{2n}^+,\kappa^{++}_{2n}\}.$$
In particular:
\begin{lemma}
   The trivial condition of $\mathbb{P}$ forces $``\kappa=\aleph_\omega$".\qed 
\end{lemma}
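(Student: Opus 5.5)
The plan is to read off the cardinal structure of $V[G]$ below $\kappa$ from the displayed list, so the real task is to justify that list. The proof has two halves: showing that the listed cardinals survive, and showing that nothing else does. Once the list is in hand, the cardinals below $\kappa$ in $V[G]$, in increasing order, are
\[
\aleph_0,\ \aleph_1^V,\ \aleph_2^V,\ \kappa_0,\ \kappa_0^+,\ \kappa_0^{++},\ \kappa_2,\ \kappa_2^+,\ \kappa_2^{++},\ \dots
\]
This is an $\omega$-sequence cofinal in $\kappa$, because the even-indexed Prikry points $\kappa_{2n}$ are cofinal in $\kappa$. Hence $\kappa$ is the $\omega$-th infinite cardinal of $V[G]$. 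Moreover, $\kappa$ stays a cardinal, since it is a limit of cardinals. So $\kappa=\aleph_\omega^{V[G]}$, and the Prikry property lets the trivial condition decide this sentence.

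For the collapsing half, I will use a density argument. The Prikry points are cofinal in $\kappa$. The generic $C_{-1}$ collapses every cardinal in $(\omega_2,\kappa_0)$ to $\omega_2$. Each $C_n$ is generic for $\mathrm{Col}(\kappa_{2n}^{++},{<}\kappa_{2(n+1)})$, so it collapses every $V$-cardinal in $(\kappa_{2n}^{++},\kappa_{2n+2})$. In particular, the odd-indexed points $\kappa_{2n+1}$ are collapsed. This half is routine.

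For the preservation half, I fix a cardinal $\lambda$ from the list and a bounded set $x\subseteq\lambda$ in $V[G]$, and I need to show that $x$ cannot code a collapse of $\lambda$. I plan to argue as follows.

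First, choose $p\in G$ whose stem is long enough to lie above $\lambda$.

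Second, apply the factorization $\mathbb{P}/p\simeq(\text{product of the stem collapses})\times\mathbb{P}(p)$. Here the $\leq^*$-order of $\mathbb{P}(p)$ is $\kappa_{2m(p)}^{++}$-closed.

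Third, combine the Prikry property with this closure in the standard way (as in \cite[Lemma~3.15]{PartIII}). The conclusion is that every subset of $\kappa_{2m(p)}^{++}$ in $V[G]$ already lies in $V[C_{-1}\times\cdots\times C_{m(p)}]$.

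Fourth, analyse that product of Levy collapses. Each factor is $\kappa_{2i}^{++}$-closed and has the $\kappa_{2(i+1)}$-cc, since $\kappa_{2(i+1)}$ is inaccessible. So, factor by factor, the product preserves $\aleph_1$, $\aleph_2$, each $\kappa_{2i}$, and each $\kappa_{2i}^{+}$ and $\kappa_{2i}^{++}$. The lower factors are small relative to these cardinals, and the upper factors are closed enough not to affect them.

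The main obstacle is the third step. It needs the Prikry property together with the $\leq^*$-closure of $\mathbb{P}(p)$ to show that $\mathbb{P}(p)$ adds no new subsets of $\kappa_{2m(p)}^{++}$ over the extension by the stem collapses. I would handle this by deciding, one at a time, the statements ``$\check\alpha\in\dot x$'' for the fewer than $\kappa_{2m(p)}^{++}$ many relevant $\alpha$, using direct extensions and fusing along the closed $\leq^*$-order. For this I would rely on the cited \cite[Lemma~4.11]{NonGalvinFil} and \cite[Lemma~3.15]{PartIII}.
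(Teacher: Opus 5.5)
Your proposal is correct and follows the route the paper indicates. You factor $\mathbb{P}/p$, use the Prikry property with the $\leq^*$-closure of $\mathbb{P}(p)$ to put bounded subsets of $\kappa$ into an extension by the product $\mathbb{C}$ of stem collapses, analyse $\mathbb{C}$ by closure and chain condition, and read off $\kappa=\aleph_\omega$ from the resulting list of cardinals.

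In step three, the fusion must also range over the conditions of $\mathbb{C}$, which has size $\kappa_{\ell(p)-1}$. So the closure you actually need is the $\kappa_{\ell(p)-1}^{++}$-closure supplied by $h_p\in\mathrm{Col}(\kappa_{\ell(p)-1}^{++},{<}\kappa)$; the stated $\kappa_{2m(p)}^{++}$-closure is too small for that count. Also, the final appeal to the Prikry property is unnecessary, since your argument already works for every generic $G$.
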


We now turn our attention to $\HOD$ of the generic extension by $\mathbb{P}$.

\smallskip

Consider $\mathbb{Q}$ the poset whose universe is
$$\{(\mathrm{Even}(\vec{\kappa}_p),\vec{c}_p, A_p, h_p, H_p)\mid p\in \mathbb{P}\},$$
where $\mathrm{Even}(\vec\kappa_p)$ is the sequence of even coordinates from $\vec{\kappa}_p = \langle \kappa_i^p\mid i<\ell(p)\rangle$.

Let $\pi\colon \mathbb{P}\to \mathbb{Q}$ be the forgetful map that sends any $p\in \mathbb{P}$ to the corresponding vector in $\mathbb{Q}$. Given $p\in \mathbb{P}$ and $(\rho,\sigma)\in A^p$  define 
$$\pi(p)\cat \langle (\rho,\sigma)\rangle:=\pi(p\cat \langle (\rho,\sigma)\rangle)$$
and $\pi(p)\cat \langle (\rho_i,\sigma_i)\mid i<n\rangle$ by recursion in the obvious fashion.  Define $\pi(q)\leq^* \pi(p)$ in the natural way (i.e., as in Definition~\ref{def: Prikryforcing})  and stipulate that  $\pi(q)\leq \pi(p)$ whenever $\pi(q)\leq^*\pi(p)\cat \langle (\rho_i,\sigma_i)\mid i<n\rangle$.

\smallskip

 As it turns out, $\langle \mathbb{Q},\leq,\leq^*\rangle$ is  a Prikry-type forcing with similar properties as those of  $\mathbb{P}$. Also, $\pi\colon \mathbb{P}\to \mathbb{Q}$ is a projection (both with respect to $\leq$ and $\leq^*$).  So, in particular, given a generic $G\s \mathbb{P}$,  $\pi[G]$ is generic for  $\mathbb{Q}$ and our earlier comments imply that $V[\pi[G]]$ and $V[G]$ posses the same bounded subsets of $\kappa$ (i.e., of $\aleph_\omega^{V[G]}$).
\begin{lemma}[Homogeneity]
\label{lemma: homogeneity}
  Let $p, q\in \mathbb{P}$ be conditions such that $\pi(p)$ and $\pi(q)$ are $\leq^*$-compatible. Then, there are $p^*\leq^* p$ and $q^*\leq^*q$ together with an isomorphism $$\Gamma\colon \mathbb{P}/p^*\rightarrow \mathbb{P}/q^*.$$
  Moreover, for any generic filter $G\s \mathbb{P}/p^*$, the collapsing generics induced from $G$ are the same as those induced from $\Gamma[G]$; namely, $\pi[G]=\pi[\Gamma[G]]$.\footnote{Here we are identifying $\Gamma[G]$ with the (generic) filter that it induces. } 
\end{lemma}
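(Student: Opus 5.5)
The plan is to use the $\leq^*$-compatibility of $\pi(p)$ and $\pi(q)$ to shrink $p$ and $q$ until they agree on everything except the odd coordinates of their stems. Then $\Gamma$ will be the map that swaps those odd coordinates and leaves everything else untouched.

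\emph{Step 1: the common reduction.} Since $\pi(p)$ and $\pi(q)$ are $\leq^*$-compatible, fix $s\in\mathbb{Q}$ with $s\leq^*\pi(p),\pi(q)$.
\begin{itemize}
\item By definition of $\leq^*$ in $\mathbb{Q}$, we get $\ell(p)=\ell(q)=:\ell$ and $\mathrm{Even}(\vec\kappa_p)=\mathrm{Even}(\vec\kappa_q)=\mathrm{Even}(\vec\kappa_s)$.
\item The components $\vec c_s, A_s, h_s, H_s$ extend the corresponding components of both $p$ and $q$.
\end{itemize}
Let $p^*$ be $s$ with the odd coordinates of $\vec\kappa_p$ reinserted, and let $q^*$ be $s$ with the odd coordinates of $\vec\kappa_q$ reinserted. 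I must check that $p^*,q^*\in\mathbb{P}$. This works because no clause of Definition~\ref{def: Prikryforcing} mentions an odd coordinate.
\begin{itemize}
\item The collapses $c_i$ live on $[\kappa_{2i}^{++},\kappa_{2(i+1)})$.
\item Since $\ell$ is odd, $\ell-1$ is even, so $h$ lives above $\kappa_{\ell-1}^{++}$, where $\kappa_{\ell-1}$ is an even coordinate.
\item The requirement $\min A>\max\vec\kappa$ only involves $\kappa_{\ell-1}$, again an even coordinate.
\item Finally, each odd coordinate $\kappa^p_{2i+1}$ already lies strictly between $\kappa_{2i}$ and $\kappa_{2i+2}$ in $p$. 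Hence the spliced sequence is increasing and consists of inaccessibles.
\end{itemize}
Then $p^*\leq^*p$ and $q^*\leq^*q$ are clear.

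\emph{Step 2: the isomorphism.} Every $r\leq p^*$ satisfies $r\leq^* p^*\cat\langle(\rho_i,\sigma_i)\mid i<n\rangle$ for some pairs, so its stem end-extends the stem of $p^*$. Define $\Gamma(r)$ by replacing the first $\ell$ entries of $\vec\kappa_r$ with those of $\vec\kappa_{q^*}$ and leaving all other components unchanged. By the same observation as in Step 1 (odd coordinates are never referenced, and the swapped odd entries sit between the same even entries), $\Gamma(r)$ is a condition. The operation $\cat$ commutes with $\Gamma$, since adding pairs $(\rho,\sigma)$ only uses $A$, $H$ and $h$, which are shared. Hence
\[
r\leq^*p^*\cat\langle(\rho_i,\sigma_i)\mid i<n\rangle
\quad\Longrightarrow\quad
\Gamma(r)\leq^*q^*\cat\langle(\rho_i,\sigma_i)\mid i<n\rangle,
\]
so $\Gamma$ maps $\mathbb{P}/p^*$ into $\mathbb{P}/q^*$. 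More generally, $r'\leq r$ if and only if $\Gamma(r')\leq\Gamma(r)$. The symmetric map from $\mathbb{P}/q^*$ back to $\mathbb{P}/p^*$ is a two-sided inverse, so $\Gamma$ is an isomorphism.

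\emph{Step 3: the moreover clause.} By construction $\pi\circ\Gamma=\pi$ on $\mathbb{P}/p^*$. Hence $\pi[\Gamma[G]]=\pi[G]$ for every generic $G\s\mathbb{P}/p^*$. In particular the induced collapse generics and the even Prikry points coincide.

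The main obstacle I expect is Step 1: making sure the common $\leq^*$-extension in $\mathbb{Q}$ really lifts to conditions of $\mathbb{P}$, especially clause~(5). This requires $j(H_s)(\kappa)\in K$, which holds because $s$ itself comes from a condition in $\mathbb{P}$. It also requires that the lifted $p^*$ genuinely satisfies $p^*\leq^*p$ coordinatewise, including $H^p(\beta)\s H^{p^*}(\beta)$ on $A_{p^*}$. Once this bookkeeping is done, the rest is routine verification that no clause refers to odd coordinates.
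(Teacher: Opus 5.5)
Your proposal is correct and follows the paper's argument. Shrink $p,q$ to conditions $p^*,q^*$ that differ only in the odd Prikry points of their stems, and let $\Gamma$ swap those points, so that $\pi\circ\Gamma=\pi$. The only difference is that the paper builds the common $\leq^*$-extension by hand: it takes the unions of the collapse parts and of $h_p,h_q$, and uses $K$ to find a set $B\in\mathcal{U}$ on which $H_p(\beta)\cup H_q(\beta)$ is a condition. You instead take that extension directly from the compatibility hypothesis and lift it back to $\mathbb{P}$. Your check that no clause of the definition refers to the odd coordinates is exactly what the paper leaves implicit.
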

\begin{proof}
Let $p= (\vec{\kappa}_p,\vec{c}_p, A_p, h_p, H_p)$ and $q=(\vec{\kappa}_q,\vec{c}_q, A_q, h_q, H_q)$ be as before. For each $i<m$ let $c^*_i:=c^p_i\cup c^p_i$. Since $j(H_p)(\kappa),j(H_q)(\kappa)\in K$ there is $B\s A_p\cap A_q$ in $\mathcal{U}$ such that $H_p(\beta)$ is compatible with $H_q(\beta)$. Set $A^*:=B$, $h^*=h_p\cup h_q$ and $H^*$ be the function with domain $A^*$ such that $H^*(\beta):=H_p(\beta)\cup H_q(\beta)$. Let $p^*$ be the vector obtained after replacing $\vec{c}_p$, $A_p$, $h_p$ and $H_p$ by $\vec{c}^*$, $A^*$, $h^*$ and $H^*$, respectively. Define $q^*$ similarly.

Clearly, $p^*\leq^* p$ and $q^*\leq^* q$.

There is a natural isomorphism $\Gamma\colon \mathbb{P}/p^*\to \mathbb{P}/q^*$ that given $r\leq p^*$ sends it to $\Gamma(r)$, the vector swapping the odd Prikry points of $p^*$, $\langle \kappa^{p}_{2(i+1)}\mid i\leq m(p)\rangle$, to the odd Prikry points of $q^*$, $\langle \kappa^q_{2(i+1)}\mid i\leq m(p)\rangle$, keeping the other entries unaltered. By construction,  $\pi[\Gamma[G]]=\pi[G]$ for all $G\s \mathbb{P}/p^*$ generic.
\end{proof}


\begin{lemma}
   $\HOD^{V[G]}_x\s V[\pi[G]]$ for every set $x\in V[\pi[G]]$.
\end{lemma}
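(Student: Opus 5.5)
The plan is the standard homogeneity argument for HOD of a generic extension, carried out relative to the intermediate model $V[\pi[G]]$. Fix $x\in V[\pi[G]]$. Since $\HOD_x^{V[G]}$ is a model of ZFC whose sets are coded by sets of ordinals, it suffices to prove that every set of ordinals $X\in \HOD^{V[G]}_x$ belongs to $V[\pi[G]]$. So fix such an $X$. Then there are a formula $\varphi$, ordinals $\vec\alpha$ and the parameter $x$ such that in $V[G]$, $X=\{\xi \mid V[G]\models \varphi(\xi,\vec\alpha,x)\}$; by reflection we may take $\varphi$ to be evaluated in some $V_\beta^{V[G]}$, which changes nothing below. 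Fix a $\mathbb{Q}$-name $\dot x$ for $x$, so that $\dot x$ may also be viewed as a $\mathbb{P}$-name via $\pi$, i.e.\ $\dot x^{G}=\dot x^{\pi[G]}$.

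I claim that
$$X=\{\xi\mid \exists p\in G\ \big(p\forces_{\mathbb{P}}\varphi(\check\xi,\check{\vec\alpha},\dot x)\big)\}=\{\xi\mid \exists q\in\pi[G]\ \exists p\ \big(\pi(p)=q \wedge p\forces_{\mathbb{P}}\varphi(\check\xi,\check{\vec\alpha},\dot x)\big)\},$$
and the second expression is visibly definable in $V[\pi[G]]$ from $\pi[G]$ and parameters in $V$. The inclusion $\subseteq$ is clear: if $\xi\in X$, some $p\in G$ forces $\varphi(\check\xi,\dots)$, and $\pi(p)\in\pi[G]$. For the reverse, suppose $p\forces\varphi(\check\xi,\dots)$ and $\pi(p)\in\pi[G]$, but $\xi\notin X$. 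Then some $r\in G$ forces $\neg\varphi(\check\xi,\dots)$. Both $\pi(p)$ and $\pi(r)$ lie in the filter $\pi[G]$ (more precisely its upward closure), so they are compatible in $\mathbb{Q}$. I want to upgrade this to $\leq^*$-compatibility so that Lemma~\ref{lemma: homogeneity} applies. To do so, I would first extend within $G$: choose $r'\in G$ below $r$, and note that any extension of $p$ compatible with the $\pi$-image lets us pass to $p'\leq p$ and $r''\leq r'$ with the same even-coordinate stems. Concretely, a common extension in $\mathbb{Q}$ of $\pi(p),\pi(r)$ has the form $\pi(s)$, and using the definition of $\leq$ via $\cat$ we can find $p'\leq p$ and $r''\leq r$ with $\pi(p'),\pi(r'')$ $\leq^*$-below $\pi(s)$, hence $\leq^*$-compatible, and we arrange $r''\in G$ by genericity (density of such $r''$ below $r$ compatible with the projected information).

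Now apply Lemma~\ref{lemma: homogeneity} to $p',r''$ to get $p^*\leq^*p'$, $r^*\leq^* r''$ and an isomorphism $\Gamma\colon\mathbb{P}/p^*\to\mathbb{P}/r^*$ with $\pi\circ\Gamma=\pi$ on generics. Take $H\subseteq \mathbb{P}/p^*$ generic; then $\Gamma[H]$ is generic below $r^*$, $V[H]=V[\Gamma[H]]$, and since $\pi[H]=\pi[\Gamma[H]]$ we get $\dot x^{H}=\dot x^{\Gamma[H]}$. Ordinals are fixed, so $\varphi(\xi,\vec\alpha,x)$ holds in $V[H]$ (as $p^*\le p$) and fails in $V[\Gamma[H]]=V[H]$ (as $r^*\le r$), contradiction. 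The fact that $\Gamma$ fixes the names $\check\xi,\check{\vec\alpha}$ and, crucially, $\dot x$ (because $\dot x$ is a $\mathbb{Q}$-name and $\Gamma$ only permutes odd Prikry points) is exactly what the ``moreover'' clause of Lemma~\ref{lemma: homogeneity} delivers.

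The main obstacle is the reduction from mere $\mathbb{Q}$-compatibility of $\pi(p),\pi(r)$ (as elements of $\pi[G]$) to $\leq^*$-compatibility, required by Lemma~\ref{lemma: homogeneity}; this needs that $\pi$ is a projection for both orders and care in lengthening stems to a common even-coordinate part, choosing the added odd points arbitrarily from the measure-one sets. Everything else is the routine Vopěnka-style homogeneity argument.
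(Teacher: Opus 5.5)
Your proposal is correct and follows essentially the same route as the paper. Both arguments use the same candidate set, defined in $V[\pi[G]]$ from $\pi[G]$ and the $\mathbb{P}$-forcing relation, and refute the reverse inclusion with the odd-point-swapping isomorphism of Lemma~\ref{lemma: homogeneity}, using that $\pi[H]=\pi[\Gamma[H]]$ so the $\mathbb{Q}$-name $\dot x$ evaluates the same way. Your reduction to $\leq^*$-compatibility spells out what the paper compresses into ``by extending both $p$ and $q$ if necessary'': take $s\in G$ below $r$ with $\pi(s)\leq\pi(p)$, and let $p'\leq p$ be $p$'s lengthened stem with the remaining data taken from $s$, so that $\pi(p')=\pi(s)$. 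This makes your extra ``by genericity'' step unnecessary.
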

\begin{proof}
 By Proposition \ref{prop: basic coding}, it suffices to show that every set of ordinals in $\HOD^{V[G]}_x$ is in $V[\pi[G]]$. Let  $a\in \HOD^{V[G]}_x$ be a subset of some $\chi\in \mathrm{Ord}$. By definition, $$a=\{\beta<\chi \mid V[G]\models \varphi(\beta, \alpha_0,\dots, \alpha_{k-1},x)\}$$ for ordinals $\alpha_0,\dots,\alpha_{k-1}$ and a formula $\varphi(y, y_0,\dots, y_{k-1},z)$ in $\mathcal{L}_\in$.

 \smallskip

 {Let $\dot{x}$ be a $\mathbb{Q}$-name such that $\dot{x}_G=x$ that is invariant under any isomorphism $\Gamma$ such that $\pi[\Gamma[G]]=\pi[G]$}. Then, we have
 $$a=\{\beta<\chi\mid \exists p\in G\, (p\forces_{\mathbb{P}}\varphi(\check{\beta}, \check{\alpha}_0,\dots,\check{\alpha}_{k-1},\dot{x}))\}.$$

 \smallskip
 
 Define $$\bar{a}=\{\beta<\chi\mid \exists p\in \mathbb{P}/\pi[G]\, (p\forces_{\mathbb{P}}\varphi(\check{\beta}, \check{\alpha}_0,\dots,\check{\alpha}_{k-1},\dot{x})\}.$$ Clearly, $\bar{a}\in V[\pi[G]]$ and $a\s \bar{a}$. We claim that $\bar{a}\s a.$

 \smallskip

 Suppose that $\beta\in \bar{a}\setminus a$. Then, there is $p\in \mathbb{P}/\pi[G]$ and $q\in G$ such that 
 $$p\forces_{\mathbb{P}}\varphi(\check{\beta}, \check{\alpha}_0,\dots,\check{\alpha}_{k-1},\dot{x})\,\text{and}\,
 q\forces_{\mathbb{P}}\neg \varphi(\check{\beta}, \check{\alpha}_0,\dots,\check{\alpha}_{k-1},\dot{x}).$$
 By extending both $p$ and $q$ if necessary we may ensure that $p\in G$ and $q\in \mathbb{P}/\pi[G]$ and that both have the same length. Note that in these circumstances we can invoke the previous lemma and find conditions $p^*\leq^* p$, $q^*\leq^* q$ together with an autormorphism $\Gamma$ sending $p^*$ to $q^*$, such that $\pi[G]=\pi[\Gamma[G]]$. Since $\Gamma$ does not change the  names involved in the  formula we get  $\beta \in a\cap \bar a$. Contradiction.
\end{proof}

\begin{theorem}\label{thm: Deltaequalsalephomega}
    Assume that there is a measurable cardinal. Then, there is a generic extension where $\aleph_\omega$ is strong limit  and $\Delta(\HOD, V)=\aleph_\omega$.
\end{theorem}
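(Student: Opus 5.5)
We will build the model in two steps. First we pass to a ground where the measurable cardinal survives and $V=\mathrm{gHOD}$ holds together with $2^\kappa=\kappa^+$. For this we force with McAloon's iteration, so that every set is generically ordinal definable, and use a Laver-style/standard lifting argument to keep $\kappa$ measurable. One concrete option is to perform the coding only above $\kappa$, via $\kappa^+$-directed-closed forcings $\mathrm{Code}(X,\lambda)$ with $\lambda>\kappa$, and to code the sets in $V_{\kappa+1}$ above $\kappa$ as well. We also arrange GCH at $\kappa$, for instance by first forcing GCH in the canonical way that preserves measurability. In this ground we fix $\mathcal{U}$, $j$ and $K$ as above, and then force with $\mathbb{P}=\mathbb{P}(\mathcal{U},K)$. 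Let $G\s\mathbb{P}$ be generic.

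\textbf{Lower bound: small sets are in $\HOD$.} By the factoring property and the Prikry lemma, every bounded subset of $\kappa=\aleph_\omega^{V[G]}$ lies in $V[C]$, where $C$ is generic for a finite product of Levy collapses $\mathrm{Coll}(\omega_2,{<}\kappa_0)\times\prod_{i\le n}\mathrm{Coll}(\kappa_{2i}^{++},{<}\kappa_{2(i+1)})$. Hence it suffices to show that for each $n$, $C_{-1}\times\dots\times C_n$, or at least every subset of $\aleph_k^{V[G]}$, is in $\HOD^{V[G]}$. The natural approach is to show that the even Prikry points and the collapse generics are definable in $V[G]$. Here the odd points carry the homogeneity, so they will not be definable. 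The even points should be recoverable as cardinals of $V[G]$, namely the $\kappa_{2n}$ are exactly the limits $\aleph_{3n+\cdots}$ that were inaccessible in the ground. I expect the main obstacle to be the collapse generics themselves. Here I would use the following argument. The collapse $\mathrm{Coll}(\kappa_{2i}^{++},{<}\kappa_{2(i+1)})$ is definable from the even points, and the ground is $\mathrm{gHOD}$. Since $V\s\HOD^{V[G]}$ should follow from $V=\mathrm{gHOD}$, every element of $V$ is OD in $V[G]$. Then one codes $C_{-1},\dots,C_n$ into the continuum pattern by a further cone-homogeneous coding forcing above $\kappa$, for example $\mathrm{Code}(\cdot,\lambda)$ for $\lambda>\kappa$ applied to a set coding $\pi[G]\restriction n$ for all $n$. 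To keep $\HOD$ under control, I would rather code $\pi[G]$ itself (all even points and collapses) at once: $\pi[G]$ is an $\omega$-sequence, so coding it is harmless for bounded sets. After this coding, $V[\pi[G]]\s\HOD$, and all bounded subsets of $\kappa$ are in $\HOD$. The coding is $\kappa^{+}$-closed, so it adds no new subsets of $\kappa$.

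\textbf{Upper bound: $\mathcal{P}(\kappa)\not\s\HOD$.} By the previous lemma, relativized to the final extension (the coding forcing being cone homogeneous and OD from $\pi[G]$), $\HOD$ is contained in $V[\pi[G]]$. The set of odd Prikry points $\{\kappa_{2n+1}\mid n<\omega\}\s\kappa$ is not in $V[\pi[G]]$, since by the homogeneity lemma it is not determined by $\pi[G]$. A density argument shows that any $q\in\mathbb{P}/\pi[G]$ can swap an odd point. Hence it is not in $\HOD$. Finally, $\kappa$ is strong limit by the factoring and the $\kappa^+$-cc together with GCH in the ground. Therefore $\Delta(\HOD,V)=\aleph_\omega$.
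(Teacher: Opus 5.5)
Your proposal is correct and follows essentially the paper's route. You code a set $x$ with $V[x]=V[\pi[G]]$ above $\aleph_\omega$ via the $\kappa^+$-closed, cone-homogeneous $\mathrm{Code}(x,\aleph_\omega)$, obtain the sandwich $V[\pi[G]]\subseteq\HOD^{V[G][c]}\subseteq\HOD^{V[G]}_x\subseteq V[\pi[G]]$ from $V=\mathrm{gHOD}$ and the homogeneity lemma, and use the odd Prikry points as the subset of $\aleph_\omega$ missing from $\HOD$; your preliminary attempt to define the even points and collapse generics directly in $V[G]$ becomes unnecessary once you code $\pi[G]$.
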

\begin{proof}
Let $\kappa$ be a measurable cardinal.
   Without loss of generality, we may assume that our ground model satisfies $``\kappa$ is measurable'', $V=\mathrm{gHOD}$ and  GCH holds on a large enough interval of cardinals. Let  $\mathbb{P}$ be the previous poset and  $G\s \mathbb{P}$ a generic filter. Let $x$ be a set of ordinals such that  $V[x]=V[\pi[G]]$. Let $\mathrm{Code}(x,\aleph_\omega)$ be the poset coding $x$ into the power-set-function pattern above $\aleph_\omega$. Let $c \s \mathrm{Code}(x,\aleph_\omega)$ be generic over $V[G]$. Since $\mathrm{Code}(x,\aleph_\omega)$ is weakly homogeneous and $\mathrm{OD}^{V[G]}_x$,
   $$V[\pi[G]]=V[x]\s \HOD^{V[G][c]} \s \HOD^{V[G]}_x\s V[\pi[G]],$$
  where the first inclusion follows from $V=\mathrm{gHOD}$ and the last inclusion follows from the previous lemma.  Thus, $\HOD^{V[G][c]}=V[\pi[G]]$.

  \smallskip
  
  In particular, $\HOD^{V[G][c]}$ have the same bounded subsets of $\aleph_\omega$ as $V[G]$ in that this is the case between $V[G]$ and $V[\pi[G]]$. So, to prove that $$\Delta(\HOD^{V[G][c]}, V[G][c])=\aleph_\omega,$$ it suffices to note that $\{\kappa_{2n+1}\mid n<\omega\}$ is not in $V[\pi[G]]$.

  \smallskip
  
    Suppose otherwise -- that $\{\kappa_{2n+1}\mid n<\omega\}\in V[\pi[G]]$. Let $\tau$ be a $\mathbb{Q}$-name such that $\tau_{\pi[G]}=\{\kappa_{2n+1}\mid n<\omega\}$ and $p\in G$  forcing this.

    Since $A':=\{\alpha\in A_p\mid \text{$A_p\cap \alpha$ is unbounded in $\alpha$}\}$ is measure one, the Mathias criterion of genericity gives  $n_*<\omega$ such that $\{\kappa_{n}\mid n\geq n^*\}\s A'$. To simplify notations, let us assume that $n_*=\ell(p)+1$.\footnote{Otherwise we just extend $p$ with the Prikry points $\{\kappa_n\mid \ell(p)\leq n\leq n_*-2\}$.}

  Let $p_0\leq p\cat  (\kappa_{\ell(p)}, \kappa_{\ell(p)+1})$ be in $G$. Given $\alpha\in A^p\cap (\kappa_{\ell(p)-1},\kappa_{\ell(p)+1})$ distinct from $\kappa_{\ell(p)}$, let $p_1$ be a condition such that  $p_1\leq p\cat  (\alpha, \kappa_{\ell(p)+1})$, $\ell(p_0)=\ell(p_1)$, $\vec{c}_{p_1}=\vec{c}_{p_0}$, $A_{p_1}=A_{p_0}$, $h_{p_1}=h_{p_0}$, $H_{p_1}=H_{p_0}$. 
  
  Note that such a condition $p_1$ always exists in that adding the odd-points (i.e., $\kappa_{\ell(p)}$ and $\alpha$) to $p$ does not have any effect on the values of $H_p$. 

    Applying Lemma~\ref{lemma: homogeneity} we obtain an isomorphism $\Gamma\colon \mathbb{P}/p_0\to \mathbb{P}/p_1$ such that $\pi[{\Gamma[G]}]=\pi[G]$. Since $p_0,p_1\leq p$ it follows that $p_i\forces_{\mathbb{P}} ``\tau=\dot{\vec{\kappa}}_{\mathrm{odd}}$" where $\dot{\vec{\kappa}}_{\mathrm{odd}}$ is the standard $\mathbb{P}$-name for the odd members of the Prikry sequence. As such, $p_0\forces_{\mathbb{P}}``\check{\kappa}_{\ell(p)}\in {\tau}$" and $p_1\forces_{\mathbb{P}}``\check{\alpha}\in {\tau}$". Since $p_0\in G$, $p_1\in \Gamma[G]$, ergo, $\kappa_{\ell(p)}\in \tau_{\pi[G]}=\{\kappa_{2n+1}\mid n<\omega\}$ and $\alpha\in \tau_{\pi[\Gamma[G]]}=\tau_{\pi[G]}$. Therefore,
    $$\kappa_{\ell(p)},\alpha\in\{\kappa_{2n+1}\mid n<\omega\}$$
    but this is impossible because $\alpha\in (\kappa_{\ell(p)-1},\kappa_{\ell(p)+1})$ and $\alpha\neq \kappa_{\ell(p)}$.
\end{proof}

\subsection{Singular cardinals of uncountable cofinality}\label{sec: compactness HOD singulars}

We begin this section with the following spin-off of a theorem of Goldberg--Poveda \cite{GolPov}:
\begin{theorem}\label{thm: spinoffGP}
 Suppose that $\kappa$ is a singular cardinal of uncountable cofinality such that $\cf(\lambda)=\cf^{\HOD}(\lambda)$ for all $\lambda < \kappa$. Then, $\cf(\kappa)=\cf^{\HOD}(\kappa)$.   
\end{theorem}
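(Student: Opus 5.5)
The plan is to compare $\delta:=\cf^{\HOD}(\kappa)$ with $\mu:=\cf(\kappa)\geq\omega_1$. Since $\HOD\s V$, we always have $\delta\geq\mu$. I would split into two cases, according to whether $\kappa$ is singular in $\HOD$, and derive a contradiction from $\delta>\mu$ in each.

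\emph{Step 1: $\kappa$ is singular in $\HOD$.} Here I would invoke the Goldberg--Poveda compactness theorem from \cite{GolPov} recalled in the introduction. It says that if $\kappa$ is a singular strong limit cardinal of uncountable cofinality and $\{\lambda<\kappa\mid \cf^{\HOD}(\lambda)<\lambda\}$ is stationary in $\kappa$, then $\kappa$ is singular in $\HOD$. Since $\kappa$ is a singular cardinal of uncountable cofinality, the $V$-singular cardinals below $\kappa$ form a club in $\kappa$. For each such $\lambda$ the hypothesis gives $\cf^{\HOD}(\lambda)=\cf(\lambda)<\lambda$, so the displayed set contains a club and $\cf^{\HOD}(\kappa)<\kappa$. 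The obstacle here is that the statement does not assume $\kappa$ is strong limit. I would therefore check that the relevant part of the Goldberg--Poveda argument (the HOD-singularity transfer) only uses uncountable cofinality. This is presumably why the paper calls the result a ``spin-off''. If strong limitness is genuinely needed there, I would reprove that part directly: use Fodor's lemma on the function $\lambda\mapsto\cf^{\HOD}(\lambda)$, which is regressive on a club, together with the OD-definability of a canonical choice of $\HOD$-cofinal sequences.

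\emph{Step 2: reduction once $\delta<\kappa$.} Fix in $\HOD$ a continuous increasing cofinal sequence $\langle\kappa_\xi\mid \xi<\delta\rangle$ in $\kappa$. For every limit $\xi<\delta$, continuity gives $\cf^{\HOD}(\kappa_\xi)=\cf^{\HOD}(\xi)$ and $\cf(\kappa_\xi)=\cf(\xi)$. Since $\kappa_\xi<\kappa$, the hypothesis yields $\cf^{\HOD}(\xi)=\cf(\xi)$ for all $\xi<\delta$.

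Now $\delta$ itself is an ordinal below $\kappa$, so the hypothesis applies to it directly: $\cf(\delta)=\cf^{\HOD}(\delta)=\delta$, since $\delta$ is $\HOD$-regular. On the other hand, $\cf(\delta)=\cf(\kappa)=\mu$ because the sequence is cofinal in $\kappa$. Hence $\delta=\mu$, which is the desired conclusion. (In this step the continuous sequence is not even necessary; only $\delta<\kappa$ is used.)

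So the whole weight rests on Step 1, namely showing that $\kappa$ cannot be regular in $\HOD$. That is where I expect the main obstacle: verifying that the Goldberg--Poveda stationarity-transfer argument goes through under the present hypotheses.
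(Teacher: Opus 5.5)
Your proposal is correct and follows essentially the paper's proof. The paper also first gets $\delta=\cf^{\HOD}(\kappa)<\kappa$ from \cite[Theorem~3.5]{GolPov}, using the stationary set $\kappa\cap\cof(\omega)$ rather than your set of singular cardinals (which strictly contains a club rather than forming one), and then concludes $\cf(\kappa)=\cf(\delta)=\cf^{\HOD}(\delta)=\delta$ exactly as in your Step~2. Your Step~1 worry is settled by the paper's remark in its final section that Goldberg--Poveda's Theorem~3.5 assumes only that $\kappa$ is singular of uncountable cofinality, so you need neither strong limitness nor the Fodor-style fallback.
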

\begin{proof}
It suffices to prove that $\cf^{\HOD}(\kappa)\leq \cf(\kappa)$. So we  suppose that this is not the case and let us produce a contradiction from this assumption.

Since $\kappa\cap \cof(\omega)$ is a  stationary consisting of $\HOD$-singular cardinals, \cite[Theorem~3.5]{GolPov} ensures that  $\cf^{\HOD}(\kappa)<\kappa$. Set $\delta :=\cf^{\HOD}(\kappa)$. 

To complete the proof it would suffice to check that $\cf(\delta)=\cf(\kappa)$ in that
$$\cf(\kappa)=\cf(\delta)=\cf^{\HOD}(\delta)=\delta=\cf^{\HOD}(\kappa).$$
Let $f\colon \delta\to \kappa$ be a cofinal, increasing, continious function in $\HOD$.

The existence of this function outright ensures that $\cf(\kappa)\leq \cf(\delta)$. For the converse inequality, let $c\s \kappa$ be a club set (in $V$) with $\mathrm{otp}(c)=\cf(\kappa)$. Define $h\colon \kappa\to \delta$ by $h(\eta):=\min\{\alpha <\delta \mid f(\alpha)\geq \eta\}.$ Clearly, $h[c]$ is a cofinal subset of $\delta$ with cofinality $\cf(\kappa)$, ergo $\cf(\delta)\leq \cf(\kappa)$. 
\end{proof}

\begin{theorem}\label{thm: compactness}
    Suppose that   $\kappa$ is a singular cardinal  of uncountable cofinality such that   $2^{\cf(\kappa)}<\kappa$. Then,   $\Delta(\HOD,V)\neq \kappa$.
\end{theorem}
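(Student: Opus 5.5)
Suppose towards a contradiction that $\Delta(\HOD,V)=\kappa$, and write $\mu:=\cf(\kappa)$. The first step is to transfer the hypotheses of Theorem~\ref{thm: spinoffGP}. For every $\lambda<\kappa$, any cofinal subset of $\lambda$ is a subset of $\lambda<\Delta(\HOD,V)$, so it lies in $\HOD$. Hence $\cf^{\HOD}(\lambda)=\cf(\lambda)$ for all $\lambda<\kappa$. Theorem~\ref{thm: spinoffGP} then gives $\cf^{\HOD}(\kappa)=\mu$. I therefore fix in $\HOD$ a continuous increasing cofinal sequence $\langle\kappa_i\mid i<\mu\rangle$ in $\kappa$. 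Since $2^{\mu}<\kappa$, I may also assume $\kappa_0>2^\mu$.

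Next I fix $X\subseteq\kappa$ with $X\notin\HOD$. Each initial segment $X\cap\kappa_i$ is in $\HOD$. Let $\rho_i$ be the index of $X\cap\kappa_i$ in the canonical $\mathrm{OD}$ well-order of $\HOD$, and put $R:=\{\rho_i\mid i<\mu\}$. As in the Dodd--Jensen argument for $\aleph_\omega$ above, it suffices to find $S\in\HOD$ with $R\subseteq S$ and $|S|<\kappa$. Indeed, collapsing $S$ to its order type $\alpha<\kappa$ in $\HOD$, the image $\pi[R]$ is a subset of some ordinal below $\kappa$, hence lies in $\HOD$. Then $R\in\HOD$, and $X=\bigcup_i(\text{the set indexed by }\rho_i)$ is recovered in $\HOD$ from $R$ together with the $\HOD$-sequence $\langle\kappa_i\rangle$. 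This is the desired contradiction.

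The main obstacle is this covering step, since we do not have a core model to cover $R$. I will do it by a Silver-style counting argument that exploits $2^\mu<\kappa$. Fix $i<\mu$. The sets $Z\in\HOD\cap\mathcal{P}(\kappa)$ with $Z\cap\kappa_j=X\cap\kappa_j$ for all $j<i$ do not obviously form a small family. Instead I plan to work with the tree $T\in\HOD$ of all $\HOD$-sequences $\langle Z_j\mid j<i\rangle$ of coherent initial segments, where $Z_j\subseteq\kappa_j$ and $i<\mu$. I will show that the level sizes of the branches through $T$ that are relevant for $X$ are bounded by some $\HOD$-cardinal below $\kappa$.

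To get this bound, I would first try to pass to $\OD$-definable sets. For $\beta\in[\kappa_0,\kappa)$, consider the $\OD$ set of all $Z\subseteq\kappa$ such that $Z\cap\kappa_j\in\HOD$ for all $j$. Two such $Z$, agreeing on a $\mu$-sized set of indices, are then compared using only subsets of $\mu$, and these lie in $\HOD$ because $2^\mu<\kappa$ and $\mu<\Delta$. The idea is to associate to $X$ the function $i\mapsto\rho_i$ and to show that, modulo the club filter on $\mu$ (which is in $\HOD$, since $\mathcal{P}(\mu)\subseteq\HOD$), the functions arising this way are bounded pointwise by an $\HOD$-function $g\colon\mu\to\kappa$. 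Taking $S:=\bigcup_{i<\mu}g(i)$, suitably trimmed, would then give the covering.

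If this direct bound fails, my fallback is to run the argument of \cite[Theorem~3.5]{GolPov} on the stationary set $\mu\cap\cof(\omega)$, where the relevant countable pieces are subsets of ordinals below $\kappa$ and hence in $\HOD$. From there I would apply Fodor's lemma in $V$, noting that all the stationary-set combinatorics on $\mu$ is absolute to $\HOD$ because $\mathcal{P}(\mu)\subseteq\HOD$.
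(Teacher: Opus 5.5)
Your opening is sound and matches the paper. From $\Delta(\HOD,V)=\kappa$ you get $\cf^{\HOD}(\lambda)=\cf(\lambda)$ for all $\lambda<\kappa$, hence $\cf^{\HOD}(\kappa)=\mu$ by Theorem~\ref{thm: spinoffGP}. Your cover-and-collapse reduction is also correct: any $S\in\HOD$ with $R\subseteq S$ and $|S|<\kappa$ gives $R\in\HOD$ and then $X\in\HOD$. But producing $S$ is the whole content of the theorem, and the proposal only offers a plan and a fallback, neither carried out. The concrete suggestion would not suffice even if completed. The $\rho_i$ are indices in an $\OD$ well-order and need not lie below $\kappa$. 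More importantly, a pointwise bound $\rho_i<g(i)$ on a club, by some $g\in\HOD$, is not a small cover. Unless $\sup_i g(i)<\kappa$, it only confines $R$ to a set such as $\bigcup_{i<\mu}\{i\}\times g(i)$, which has size $\kappa$. You also give no reason such a $g$ exists. The fallback via \cite[Theorem~3.5]{GolPov} is a statement about cofinalities and does not by itself cover a $\mu$-sized set of large ordinals. A telling symptom is that your plan never genuinely uses $2^\mu<\kappa$: $\mathcal{P}(\mu)\subseteq\HOD$ already follows from $\mu<\kappa$.

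The missing idea is Shelah's rank argument, which is exactly where $\cf(\kappa)\geq\omega_1$ and $2^\mu<\kappa$ are used. Let $f_X(i):=$ the $\OD$-index of $X\cap\kappa_i$, so $\mathrm{ran}(f_X)=R$ for your $X$. Let $X\lhd Y$ iff $f_X(i)<f_Y(i)$ for all sufficiently large $i<\mu$.

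This relation is $\OD$, because it uses the $\HOD$-sequence $\langle\kappa_i\mid i<\mu\rangle$; that is what Theorem~\ref{thm: spinoffGP} is really needed for. It is well-founded since $\mu$ is regular uncountable. Given a putative descending sequence $\langle X_n\mid n<\omega\rangle$, a single coordinate $i$ above all the tail points gives a descending sequence of ordinals.

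Distinct $X,Y$ have $f_X(i)\neq f_Y(i)$ on a tail, so two members of one rank class each exceed the other unboundedly often. Suppose a rank class had $(2^\mu)^+$ members. Colour each pair by the least coordinate at which the earlier function is larger, and apply Erd\H{o}s--Rado $(2^\mu)^+\to(\mu^+)^2_\mu$; this again yields a descending sequence of ordinals. Hence each rank class $R_\rho$ is $\OD$ of size at most $2^\mu$.

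Now let $\rho$ be the $\lhd$-rank of $X$ and set $S:=\bigcup\{\mathrm{ran}(f_Y)\mid Y\in R_\rho\}$. This is an $\OD$ set of ordinals, hence in $\HOD$, of size at most $2^\mu<\kappa$, and it contains $R$. With this lemma inserted your finish goes through. It is a mild variant of the paper's ending, which instead uses $H_\kappa\subseteq\HOD$ to well-order each rank class and so obtains an $\OD$ well-order of all of $\mathcal{P}(\kappa)$.
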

\begin{proof}
   Suppose towards a contradiction that  $\Delta(\HOD, V)=\kappa$. Our proof will combine Theorem~\ref{thm: spinoffGP} with ideas from a theorem of Shelah \cite{ShelahHOD}. Note that the assumptions of Theorem~\ref{thm: spinoffGP} are met, hence $$\cf^{\HOD}(\kappa)=\cf(\kappa).$$


   \begin{claim}
       $H_\kappa=H_{\kappa}^{\HOD}$.
   \end{claim}
   \begin{proof}[Proof of claim]
      It remains to prove the left-to-right inclusion. Let $x\in H_\kappa$ then $x\in P(\mathrm{tcl}(\{x\}))$. Since $|\mathrm{tcl}(\{x\})|<\kappa$, we may apply Proposition \ref{prop: basic coding} to conclude that $x\in H_\kappa^{\HOD}$.
   \end{proof}

  Working in $\HOD$, let $\vec{\delta}=\langle \delta_i\mid i<\cf(\kappa)\rangle$ and $\vec{t}=\langle t_i\mid i<\theta \rangle$ be, respectively, a cofinal, increasing, continuous sequence in $\kappa$ and an injective enumeration of $H_\kappa$ (i.e., of $H_\kappa^{\HOD}$). From this point onward, we argue similarly to a  theorem of  Shelah saying that there is an $\mathrm{OD}_x$ well-ordering of $\mathcal{P}(\kappa)$ for some $x\s \kappa$ \cite{ShelahHOD}. As we will see, any set $x\s \kappa$ such that $\vec{\delta},\vec{t}\in \HOD_{\{x\}}$ works to grant this inclusion. In our particular case,  both $\vec{\delta},\vec{t}$ are already in $\HOD$ so $x=\emptyset$ will work, and as a result $\mathcal{P}(\kappa)\s \HOD$.

   \smallskip

   For each $X\s \kappa$ we define $f_X\colon \cf(\kappa)\to \theta$  by $$\text{$f_X(i):=``$The unique $j<\theta$ such that $X\cap \delta_i=t_j$".}$$
   Define the following relation on $\mathcal{P}(\kappa)$:
   $$X\lhd Y\;\Longleftrightarrow\; \{i<\cf(\kappa)\mid f_X(i)\geq f_Y(i)\}\;\text{is bounded in $\cf(\kappa)$}.$$

    Note that if $X\neq Y$ then $f_X(i)\neq f_Y(i)$ for a tail of $i<\cf(\kappa)$. Also, notice that   $\lhd$ is well-founded: Suppose that $\langle X_n\mid n<\omega\rangle$ were to be a $\lhd$-decreasing sequence of subsets of $\kappa$. Since $\cf(\kappa)\geq \omega_1$ there is $i<\cf(\kappa)$ such that $f_{X_{n+1}}(i)<f_{X_n}(i)$ for all $n<\omega$, which would contradict the well-foundedness of the ordinals.

  \smallskip

   For each $\rho\in \mathrm{Ord}$ let $R_\rho:= \{X\s \kappa\mid \mathrm{rank}_{\lhd}(X)=\rho\}$.

   \begin{claim}
       For each $\rho\in \mathrm{Ord}$, $|R_\rho|\leq 2^{\cf(\kappa)}<\kappa$.
   \end{claim}
   \begin{proof}[Proof of claim]
   This follows from the Erdös--Rado theorem.

   Towards a contradiction, let $\{X_\alpha\mid \alpha <(2^{\cf(\kappa)})^+\}$  be an injective enumeration of pairwise distinct members of $R_\rho$. Define a coloring $$c\colon [(2^{\cf(\kappa)})^+]^2 \to \cf(\kappa)$$ by stipulating that $$c(\alpha,\beta):=\min\{i<\cf(\kappa)\mid f_{X_\alpha}(i)>f_{X_\beta}(i)\}.$$
  This is well-defined because given $X, Y\in R_\rho$ and $X\neq Y$ there are infinitely many $i<\cf(\kappa)$ such that $f_X(i)>f_Y(i)$. Thus, by the Erdös--Rado theorem, there is $H\s (2^{\cf(\kappa)})^+$ with $|H|=\cf(\kappa)^+$ that is monochromatic -- say, with color $i^*$. This is impossible, for it will imply that $f_{X_\alpha}(i^*)>f_{X_\beta}(i^*)$ for all $\alpha<\beta$ in $H$, thus contradicting well-foundedness of the ordinals.
   \end{proof}
For each ordinal $\rho$, let $A_\rho :=\bigcup\{\mathrm{ran}(f_X)\mid X\in R_\rho\}$. By the above observation $|A_\rho|<\kappa$. Let $g_\rho\colon A_\rho \to \mathrm{otp}(A_\rho)$ be the unique order-preserving bijection and, for each $X\in R_\rho$,  $g_\rho \circ f_X\colon \cf^{\HOD}(\kappa) \to \mathrm{otp}(A_\rho)$. Since this function belongs to $H_\kappa$ it also belongs to $H_\kappa^{\HOD}$. In particular, there is a unique index  $i_X<\theta$ in the enumeration of $\vec{t}$ such that $g_\rho\circ f_X= t_{i_X}$.

\smallskip

Define a well-order $\prec_\rho$ over each $R_\rho$ by saying that $X\prec_\rho Y$ if and only if $i_X<i_Y$, and a well-order $\prec$ of $\mathcal{P}(\kappa)$ by stipulating that $X\prec Y$ if and only if $\text{ran}_{\lhd}(X)<\mathrm{rank}_{\lhd}(Y)$ or if $\text{ran}_{\lhd}(X)=\rho=\mathrm{rank}_{\lhd}(Y)$ then $X\prec_\rho Y$.

\smallskip

The well-order $\prec$ is first-order definable from  $\vec{t}$
and $\vec{\delta}$. Indeed, from  $\vec{t}$
and  $\vec{\delta}$ one defines the functions $f_X$, the relation $\lhd$, the rank levels $R_\rho$, the sets $A_\rho$, the order-isomorphisms $g_\rho$, and finally $\prec$. Since in our case, $\vec{t},\vec{\delta}$ are in $\HOD$ we conclude that $\mathcal{P}(\kappa)\s \HOD$: Let $X\s \kappa$.
Let $\rho$ be the order type of the set of $\prec$-predecessors of $X$. Then $X$ is the
unique subset of $\kappa$ which is the $\rho$-th element of the $\OD$ well-order $\prec$. Since $X$ is a set of ordinals, we conclude that $X\in \HOD$. 
 \end{proof}
 Incidentally, we  proved the following variant of Shelah's theorem:
 \begin{theorem}
        Suppose that   $\kappa$ is a singular cardinal  of uncountable cofinality with   $2^{\cf(\kappa)}<\kappa$ and that for each $\alpha < \kappa$ there is an $\mathrm{OD}$ well-ordering of $\mathcal{P}(\alpha)$. Then,   there is an $\mathrm{OD}$ well-ordering of $\mathcal{P}(\kappa)$.\qed
 \end{theorem}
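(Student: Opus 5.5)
The plan is to rerun the proof of Theorem~\ref{thm: compactness} with the $\HOD$-hypothesis replaced by the assumption that each $\mathcal{P}(\alpha)$, for $\alpha<\kappa$, carries an $\OD$ well-order. The only place that proof used $\Delta(\HOD,V)=\kappa$ was to get a cofinal sequence $\vec\delta$ and an enumeration $\vec t$ of $H_\kappa$ that are $\OD$. So the whole task is to produce such $\OD$ parameters from the new hypothesis.

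For $\vec\delta$, note that no correctness of cofinality in $\HOD$ is needed here. The well-foundedness of $\lhd$ and the Erd\H{o}s--Rado bound $|R_\rho|\leq 2^{\cf(\kappa)}$ only require some cofinal increasing sequence of length $\cf(\kappa)$. The canonical choice is the $\OD$ sequence $\langle \delta_i\mid i<\cf(\kappa)\rangle$ of the first $\cf(\kappa)$ elements of the club of limit cardinals below $\kappa$ (or of cardinals $\aleph_{\omega\cdot\xi}$), which is definable from $\kappa$ alone. If one insists on continuity it is also available: take the closure of the first $\cf(\kappa)$ limit cardinals. The only requirement is uniqueness of $f_X(i)$, and that holds automatically.

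For $\vec t$, I would not need all of $H_\kappa$. The proof only uses $t_j$ for two purposes. First, for subsets $X\cap\delta_i$, to define $f_X$. Second, for functions $g_\rho\circ f_X\colon\cf(\kappa)\to\mathrm{otp}(A_\rho)$ with $\mathrm{otp}(A_\rho)<\kappa$; these are coded as subsets of some ordinal $\alpha<\kappa$ via G\"odel pairing. So I will let $\vec t$ enumerate $\bigcup_{\alpha<\kappa}\mathcal{P}(\alpha)$ as follows. Order pairs $(\alpha,Y)$ first by $\alpha$, the least ordinal with $Y\subseteq\alpha$, and then by $W_\alpha$. Here $W_\alpha$ is chosen uniformly, and this is the key step and the main obstacle. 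The hypothesis gives, for each $\alpha$, some $\OD$ well-order, but the choices must be made uniformly in order to get a single $\OD$ sequence. The fix is to let $W_\alpha$ be the least $\OD$ well-order of $\mathcal{P}(\alpha)$ in the canonical well-order of $\OD$ sets, for instance the one given by the least tuple consisting of a defining formula, a $\beta$, and ordinal parameters. This is definable without parameters beyond $\kappa$, because $\OD$ has a definable well-order. Thus $\langle W_\alpha\mid\alpha<\kappa\rangle$ is $\OD$, and so is $\vec t$.

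With these $\OD$ parameters in hand, I then follow the earlier construction. Define $f_X$, the relation $\lhd$, the rank classes $R_\rho$, the sets $A_\rho$ and the collapses $g_\rho$, and let $i_X$ be the index of the code of $g_\rho\circ f_X$. The resulting order $\prec$ is definable from $\vec\delta$, $\vec t$ and $\kappa$, hence is an $\OD$ well-ordering of $\mathcal{P}(\kappa)$. The only points to recheck are two. First, $|A_\rho|<\kappa$, which follows from the Claim and the bound $2^{\cf(\kappa)}<\kappa$. Second, the map $X\mapsto i_X$ is injective on $R_\rho$: $g_\rho$ is injective, so $g_\rho\circ f_X$ determines $f_X$, which in turn determines every $X\cap\delta_i$ and hence $X$.
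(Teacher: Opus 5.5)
\textbf{There is a genuine gap in your choice of $\vec\delta$.} The sequence you propose need not be cofinal in $\kappa$. Take $\kappa=\aleph_{\omega_2+\omega_1}$. Then $\cf(\kappa)=\omega_1$, but the first $\omega_1$ limit cardinals are the $\aleph_{\omega\cdot\xi}$ for $0<\xi<\omega_1$, and all of them lie below $\aleph_{\omega_1}$. Taking closures does not help.

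More importantly, $\OD$ sets of ordinals lie in $\HOD$. So an $\OD$ cofinal subset of $\kappa$ of order type $\cf(\kappa)$ exists if and only if $\cf^{\HOD}(\kappa)=\cf(\kappa)$. Your remark that ``no correctness of cofinality in $\HOD$ is needed'' therefore has things backwards: this is the one nontrivial input, and no sequence ``definable from $\kappa$ alone'' supplies it. For non-fixed points $\kappa=\aleph_\lambda$ with $\lambda<\kappa$, you could pull back the $W_\lambda$-least cofinal subset of $\lambda$. For $\kappa=\aleph_\kappa$ there is no such shortcut.

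You also cannot switch to a longer $\OD$ cofinal sequence, say of length $\cf^{\HOD}(\kappa)$. With an index set of cardinality $\nu$, the Erd\H{o}s--Rado argument only gives $|R_\rho|\leq 2^{\nu}$. You only know $2^{\cf(\kappa)}<\kappa$, and a priori $\cf^{\HOD}(\kappa)$ could be $\kappa$ itself.

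\textbf{How the paper fills this.} It uses its spin-off of Goldberg--Poveda.
\begin{enumerate}
\item Your hypothesis gives $\mathcal{P}(\alpha)\subseteq\HOD$ for every $\alpha<\kappa$: each $X\subseteq\alpha$ is the $\xi$-th element of an $\OD$ well-order, hence $\OD$.
\item Consequently $\cf^{\HOD}(\lambda)=\cf(\lambda)$ for all $\lambda<\kappa$. In particular, the points of countable cofinality below $\kappa$ form a stationary set of $\HOD$-singular ordinals.
\item Goldberg--Poveda's Theorem~3.5 then gives $\cf^{\HOD}(\kappa)<\kappa$.
\item The easy argument that $\cf(\cf^{\HOD}(\kappa))=\cf(\kappa)$ then yields $\cf^{\HOD}(\kappa)=\cf(\kappa)$.
\item Now let $\vec\delta$ be the least cofinal (closed) sequence of order type $\cf(\kappa)$ in the canonical well-order of $\HOD$. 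It is $\OD$.
\end{enumerate}

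With this repair, the rest of your plan is correct and is essentially the paper's argument. That includes the uniform choice of $W_\alpha$ as the least $\OD$ well-order of $\mathcal{P}(\alpha)$, the coding of $g_\rho\circ f_X$ as a bounded subset of $\kappa$, the bound $|A_\rho|<\kappa$, and the injectivity of $X\mapsto i_X$ on $R_\rho$. The paper obtains $\vec t$ from $H_\kappa\subseteq\HOD$ and the definable well-order of $\HOD$ rather than from the $W_\alpha$'s, but that amounts to the same thing.
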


\section{Covering and Magidor's  theorem}\label{sec: optimality of Magidors}

\begin{question}
Assume that $(\HOD, V)$ fails to have covering. 
What is the first cardinal $\kappa$ such that $(\HOD, V)$ fails to have $\cf(\kappa)^+$-covering at $\kappa$?
\end{question}
Trivially, no regular cardinal can be such a witness.

For singular cardinals of countable cofinality the situation is easy as well.
\begin{proposition}
 Assuming the existence of a measurable cardinal it is consistent for a singular strong limit cardinal of countable cofinality to be the first cardinal $\kappa$ where $(\HOD, V)$ fails to satisfy the $\omega_1$-covering property.
\end{proposition}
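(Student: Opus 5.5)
The plan is to start from a model of $V=\mathrm{gHOD}$ with a measurable cardinal $\kappa$ and GCH on a large enough interval. We then force with plain Prikry forcing $\mathbb{P}_{\mathcal{U}}$, interleaved with Lévy collapses, so that $\kappa$ becomes a strong limit of countable cofinality. The first option is the forcing $\mathbb{P}(\mathcal{U},K)$ of Definition~\ref{def: Prikryforcing}, which makes $\kappa=\aleph_\omega$. Afterwards we code everything except the Prikry sequence into the continuum pattern above $\kappa$, so that $\HOD$ of the final model is a model in which the Prikry (or odd Prikry) sequence is not covered by any countable set.

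Concretely, let $G\s\mathbb{P}$ be generic and $\pi[G]$ the projection to $\mathbb{Q}$ as above. Choose a set of ordinals $x$ with $V[x]=V[\pi[G]]$, and let $c$ be generic for $\mathrm{Code}(x,\aleph_\omega)$. The argument of Theorem~\ref{thm: Deltaequalsalephomega} gives
$$\HOD^{V[G][c]}=V[\pi[G]].$$

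Below $\kappa$ the $\omega_1$-covering property holds trivially, since $\HOD^{V[G][c]}$ and $V[G][c]$ have the same bounded subsets of $\kappa$. More precisely, if $X\s\lambda<\kappa$ is countable, then $X\in\HOD$ itself, so $X$ covers itself.

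It remains to see that $\omega_1$-covering fails at $\kappa$. Take $X=\{\kappa_{2n+1}\mid n<\omega\}$, and suppose $Y\in V[\pi[G]]$ is countable in $V[G][c]$ with $X\s Y\s\kappa$. Since cardinals $\geq\kappa$ are preserved and $\aleph_1$ is preserved, $Y$ is countable in $V[\pi[G]]$ as well. We then derive a contradiction by a homogeneity argument. Fix a $\mathbb{Q}$-name $\tau$ for $Y$ and a condition $p\in G$ forcing $\dot{\vec\kappa}_{\mathrm{odd}}\s\tau$ and $|\tau|=\aleph_0$. As in the proof of Theorem~\ref{thm: Deltaequalsalephomega}, extend $p$ to $p_0\in G$ that adds the pair $(\kappa_{\ell(p)},\kappa_{\ell(p)+1})$. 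For each $\alpha\in A_p\cap(\kappa_{\ell(p)-1},\kappa_{\ell(p)+1})$, let $p_\alpha$ be the condition obtained by replacing the odd point $\kappa_{\ell(p)}$ by $\alpha$. Lemma~\ref{lemma: homogeneity} then yields $\Gamma_\alpha$ with $\pi[\Gamma_\alpha[G]]=\pi[G]$. Since $p_\alpha\in\Gamma_\alpha[G]$ forces $\check\alpha\in\tau$, we get $\alpha\in\tau_{\pi[G]}=Y$ for all such $\alpha$. Because $\kappa_{\ell(p)+1}$ is measurable-limit-like in the sense that $A_p\cap\kappa_{\ell(p)+1}$ is unbounded (we arrange $\kappa_n\in A'$ as in that proof), there are uncountably many such $\alpha$ in $V[\pi[G]]$. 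Indeed, $\kappa_{\ell(p)}$ survives as an uncountable cardinal, and $A_p\cap\kappa_{\ell(p)+1}$ has size at least that. This contradicts $|Y|=\aleph_0$.

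The main obstacle is the step showing $|\{\alpha\}|>\aleph_0$ after collapses, i.e.\ that $A_p\cap(\kappa_{\ell(p)-1},\kappa_{\ell(p)+1})$ remains uncountable in $V[\pi[G]]$. If needed, I would instead run the swap at a higher level: choose $\alpha$ among points below $\kappa_{2n}$ for a large $n$ and use that $\kappa_{2n}$ remains a cardinal. I would also make sure that $p_\alpha$ and $p_0$ have $\leq^*$-compatible $\pi$-images, which holds since odd points do not affect $H_p$. Finally, strong limitness of $\kappa$ follows from the GCH assumption and the $\kappa^+$-cc of $\mathbb{P}$.
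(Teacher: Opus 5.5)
Your proof is correct, but it takes a heavier route than the paper and proves more than the statement asks.

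\textbf{The paper's route.} The paper forces with \emph{plain} Prikry forcing over a model of $V=\mathrm{gHOD}$. Weak homogeneity then gives $\HOD^{V[G]}=V$ directly, with no coding. Covering below $\kappa$ is immediate from $V[G]_\kappa=V_\kappa$. Failure at $\kappa$ is immediate because $\kappa$ is regular in $V$: every set in $V$ that is countable (cardinals being preserved) is bounded in $\kappa$, so it cannot contain the Prikry sequence.

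\textbf{Your route.} You use $\mathbb{P}(\mathcal{U},K)$ together with $\mathrm{Code}(x,\aleph_\omega)$ to obtain $\HOD^{V[G][c]}=V[\pi[G]]$. In this model the even points already lie in $\HOD$, so $\kappa$ has countable cofinality there. Consequently you must strengthen the final homogeneity argument of Theorem~\ref{thm: Deltaequalsalephomega}. Instead of showing that two distinct ordinals both enter $\tau_{\pi[G]}$, you show that any covering set absorbs all of $A_p\cap(\kappa_{\ell(p)-1},\kappa_{\ell(p)+1})$. This argument goes through. It buys the stronger conclusion that the first failure of $\omega_1$-covering between $\HOD$ and $V$ can be $\aleph_\omega$ itself, rather than a singularized former measurable. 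The cost is that it depends on the coding, the computation of $\HOD^{V[G][c]}$, and Lemma~\ref{lemma: homogeneity}.

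\textbf{Two slips to fix.}
\begin{enumerate}
\item The odd point $\kappa_{\ell(p)}$ does \emph{not} survive as a cardinal. It lies strictly between $\kappa_{\ell(p)-1}^{++}$ and the even point $\kappa_{\ell(p)+1}$, so the L\'evy collapse on that interval reduces it to size $\kappa_{\ell(p)-1}^{++}$. The correct reason the set of swap candidates is uncountable is the one in your fallback. The set is unbounded in the even point $\kappa_{\ell(p)+1}$. That point remains a regular cardinal in $V[\pi[G]]$ and $V[G][c]$, namely the successor of $\kappa_{\ell(p)-1}^{++}$. Hence the set has size $\kappa_{\ell(p)+1}$ there, which contradicts the countability of $Y$.
\item Strong limitness of $\aleph_\omega$ does not follow from GCH and the $\kappa^+$-cc; these only give $2^\kappa=\kappa^+$. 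It comes from the factoring property: every bounded subset of $\kappa$ is added by a product of L\'evy collapses of size less than $\kappa$.
\end{enumerate}
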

\begin{proof}
Suppose that $\kappa$ is a measurable cardinal and $V=g\HOD$. Let $\mathbb{P}$ be Prikry forcing relative to a normal measure on $\kappa$. Since $\mathbb{P}$ is weakly homogeneous, $V=\HOD^{V[G]}$ for any $G\s \mathbb{P}$ generic. However, in $V[G]$, the Prikry sequence converging to $\kappa$ is a countable set in $V[G]$ that cannot be covered by subsets of $\kappa$ in $\HOD^{V[G]}$  of size $\aleph_0$. Since $V[G]_\kappa = V_\kappa$ it follows that $\kappa$ is the first cardinal where the failure of covering takes place.
\end{proof}
How about singular cardinals of uncountable cofinality? To address this, we prove the following level-by-level version of Magidor's covering theorem. Since the proof mimics Magidor's argument, we recommend the readers to have \cite[Theorem~2.33]{AbrahamMagidor} close to them.   All the standard PCF results and notions used in the proof are exposed in greater detail in \cite[\S2]{AbrahamMagidor}.
\begin{proposition}
    [Local Magidor Covering]\label{lemma: local Magidor}
    Suppose \(\kappa\) is a singular cardinal of uncountable cofinality \(\mu\) and  \(M\) is an inner model satisfying:
    \begin{enumerate}
        \item\label{clause: cardinalcorrectness} $ \mathrm{Card}^M\cap (\kappa^{+M}+1)=\mathrm{Card}\cap (\kappa^++1)$ and {$\cf^M(\kappa)=\mu$}.
        \item\label{clause: GCH} $\{\gamma<\kappa\mid M\models \gamma^\mu\leq \gamma^+\}$ is stationary in $\kappa$.
        \item\label{clause: coverbelow}  $M$ has the $\mu^+$-cover property at $\alpha$ for all $\alpha < \kappa$.    \end{enumerate}
    Then,  \(M\) has the \(\mu^+\)-cover property at \(\kappa^+\).\footnote{In particular, $M$ has the $\mu^+$-cover property at $\kappa$.}
\end{proposition}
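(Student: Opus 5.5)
The plan follows Magidor's argument (\cite[Theorem~2.33]{AbrahamMagidor}), localized to $\kappa$. \textbf{Reduction.} Since $\kappa^+$ is a cardinal of $M$ and $\kappa^{+M}=\kappa^+$ by Clause~(\ref{clause: cardinalcorrectness}), any $X\subseteq\kappa^+$ with $|X|\le\mu$ is bounded by some $\beta<\kappa^+$. In $M$ there is a bijection $e\colon\kappa\to\beta$, and $e^{-1}[X]\subseteq\kappa$ still has size $\le\mu$. So it suffices to cover subsets of $\kappa$ of size $\mu$ and pull the cover back along $e$.

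\textbf{Setup.} Fix $X\subseteq\kappa$ with $|X|=\mu$. Using $\cf^M(\kappa)=\mu$, fix in $M$ a continuous increasing cofinal sequence $\langle\kappa_i\mid i<\mu\rangle$ of cardinals. By Clause~(\ref{clause: GCH}) and continuity, there is a stationary $S\subseteq\mu$ such that $M\models\kappa_i^\mu\le\kappa_i^+$ for $i\in S$. For $i\in S$, fix in $M$ an enumeration $\langle t^i_\xi\mid\xi<\kappa_i^+\rangle$ of $([\kappa_i]^{\le\mu})^M$.

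\textbf{Step 1: local covers.} By Clause~(\ref{clause: coverbelow}), for each $i$ choose $Y_i\in M$ with $X\cap\kappa_i\subseteq Y_i\subseteq\kappa_i$ and $|Y_i|\le\mu$. Let $g(i)<\kappa_i^+$ be the index of $Y_i$ in the $i$-th enumeration. The product $\prod_{i\in S}\kappa_i^+$ is computed the same in $M$ and $V$ by cardinal correctness, and $g$ lies in it, but $g$ itself is in $V$, not $M$.

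\textbf{Step 2: find a function in $M$ dominating $g$.} I want $h\in M$ with $g(i)<h(i)$ for stationarily many $i\in S$. I would obtain it from a scale in $M$ that remains a scale in $V$. Specifically, take $\langle f_\xi\mid\xi<\kappa^+\rangle\in M$ cofinal in $\prod_{i\in S}\kappa_i^+$ modulo the (nonstationary or bounded) ideal; such a scale exists in $M$ by Shelah's theorem applied to a club subsequence if needed. To show it is cofinal in $V$, I would argue as follows. For each $j<\mu$, covering at $\kappa_j^+<\kappa$ puts $\{g(i)\mid i<j\}$ inside an $M$-set of size $\mu$, which bounds $g\restriction j$ by an $M$-function. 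I would then stitch these bounds together by a Fodor/pressing-down argument on the stationary set of $i\in S$ of uncountable cofinality. An alternative formulation is via characteristic functions: let $N\prec H_\Theta$ with $|N|=\mu$ and $X\subseteq N$, and show $\chi_N(i)=\sup(N\cap\kappa_i^+)$ is bounded by some $f_\xi$ on a stationary set. This is where $\mu>\omega$ and the local GCH are used.

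\textbf{Step 3: define the cover in $M$.} Given $\xi$ with $g<f_\xi$ on a stationary $T\subseteq S$, I work inside $M$ with $f_\xi$ and the enumerations to define an $M$-Skolem hull $Y$ of size $\mu$ containing $X$. Here I would follow Magidor's trick: $M\models\kappa_i^\mu=\kappa_i^+$ shows that $N\cap\kappa_i^+$ is determined on a club by data lying in $M$. Then $Y$ is reconstructed in $M$ as the union over $i\in T$ of the pieces $t^i_{g(i)}$. Since $g\restriction T$ is recoverable from $f_\xi$ and a $\mu$-sized $M$-set of indices, obtained by covering $\{g(i)\}$ below each $f_\xi(i)$ via Clause~(\ref{clause: coverbelow}), the resulting set has size $\mu$.

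\textbf{Main obstacle.} I expect Step 2 to be the hardest part. One must see that the $M$-scale stays cofinal in $V$ on a stationary set, and then turn domination into an actual $\mu$-sized cover. Only covering strictly below $\kappa$ is available, so this is exactly where stationarity of the local GCH set and $\cf(\kappa)=\mu>\omega$ must do the work.
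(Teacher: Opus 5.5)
\textbf{Step 2 is unproven, and the route you suggest does not work.} Your outline matches the paper's: reduce to subsets of $\kappa$, use the local GCH to index the local covers $Y_i$ by a function $g\in\prod_i\kappa_i^+$, dominate $g$ by a function of $M$, and reassemble. But the two steps that carry the argument are missing.

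Your stitching argument is circular. The bounds $b_j\in M$ for $g\restriction j$ form a sequence that exists only in $V$. Merging them into a single function of $M$ would require covering a $\mu$-sized family of such objects, that is, after coding each $b_j$ by an index below $\kappa_j^{+}$, a function of exactly the same kind as $g$. There is also nothing regressive to press down on. The characteristic-function variant just restates the same requirement for $\chi_N$.

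The missing idea is that this step uses no covering at all. It is pcf theory carried out in $V$ on an object of $M$. Build the $M$-scale via the proof of Shelah's representation theorem so that every limit $\xi<\kappa^+$ carries a club $E_\xi\in M$ with $f_\xi=^*\sup\{f_\zeta\mid \zeta\in E_\xi\}$. These clubs remain clubs in $V$, so by \cite[Lemma~2.19]{AbrahamMagidor} the sequence satisfies $(\star)_\theta$ in $V$ for the regular $\theta<\kappa$. Hence in $V$ it has an exact upper bound $u$ with $\{i\mid \cf(u(i))<\theta\}$ bounded for each $\theta<\kappa$. Therefore $u(i)=\kappa_i^{+}$ on a club $C$; this is where $\kappa_i^{+M}=\kappa_i^+$ is used. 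So the $M$-scale restricted to $C$ is cofinal in $(\prod_{i\in C}\kappa_i^+)^V$, and intersecting with your stationary set $S$ gives $f_\xi\in M$ with $g<f_\xi$ on a stationary $T$.

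\textbf{Step 3 lacks the device that turns domination into a cover lying in $M$.} Your union $\bigcup_{i\in T}t^i_{g(i)}$ lies only in $V$. ``Covering $\{g(i)\}$ below each $f_\xi(i)$'' produces $\mu$ separate sets of $M$ whose sequence is again not in $M$. And $\{g(i)\mid i\in T\}$ need not be bounded in $\kappa$, so Clause~(3) cannot be applied to it directly. The paper's fix goes in four steps.
\begin{enumerate}
\item Since $f_\xi\in M$, fix in $M$ a sequence of injections $e_i\colon f_\xi(i)\to\kappa_i$.
\item For limit $i\in T$, $e_i(g(i))<\kappa_i=\sup_{l<i}\kappa_l$. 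Fodor then yields a stationary $T'\subseteq T$ and some $l^*<\mu$ with $e_i(g(i))<\kappa_{l^*}$ for all $i\in T'$.
\item A \emph{single} application of Clause~(3) at $\kappa_{l^*}$ covers $\{e_i(g(i))\mid i\in T'\}$ by some $U\in M$ with $|U|\le\mu$.
\item Set $Y:=\bigcup_{i\in S}\bigcup\{t^i_\eta\mid \eta\in e_i^{-1}[U]\}$. Take $S$ to be the $M$-definable set of indices where the local GCH holds, and note the union runs over all of it, since $T'\notin M$. Then $Y\in M$, $|Y|\le\mu$ because each $e_i$ is injective, and $X\subseteq Y$ because $T'$ is unbounded in $\mu$.
\end{enumerate}
This pressing-down step, together with Shelah's theorem, is where $\mu>\omega$ is really used. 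The local GCH serves only to place $g$ inside $\prod_i\kappa_i^+$. The Skolem-hull and ``$N\cap\kappa_i^+$ determined on a club'' remarks play no role.
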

\begin{proof}
    Let $X\s \kappa^+$ be a set with $|X|\leq \mu$. We construct  $Y\in M$ such that $X\s Y\s \kappa^+$ and $|Y|\leq \mu$. 
    Through the proof, unless otherwise specified, by \emph{cardinality} we shall mean cardinality in the sense of the universe $V$.

   Set $\sigma:=\sup(X)$. Since $|X|\leq \mu$ it follows that $\sigma < \kappa^+$. By cardinal-correctness (Clause~\eqref{clause: cardinalcorrectness}) this is the same as saying that $\sigma<\kappa^{+M}$. Fix a bijection $e\colon \kappa' \rightarrow \sigma$ in $M$ for some $\kappa'\leq \kappa$. Then, $e^{-1}[X]\s \kappa$ and is a set of cardinality $\leq\mu$. 
    If we were able to prove the existence of some $X^*\in M$ with $e^{-1}[X]\s X^*\s \kappa$ and $|X^*|\leq \mu$ then we would have that $Y:=e[X^*]$ is the desired covering set from $M$.  So, everything reduces to the problem of covering $V$-subsets of $\kappa$ of cardinality ${\leq}\mu$ by means of $M$-subsets of $\kappa$  of cardinality ${\leq \mu}$. In a slight abuse of notation for the rest of the proof we shall keep denoting this subsets of $\kappa$ of cardinality ${\leq}\mu$ by $X$.

    \smallskip

    Let $X\s \kappa$ be a set in $V$ with size ${\leq}\mu$. If $X$ was bounded in $\kappa$ then, by covering below $\kappa$ (Clause~\eqref{clause: coverbelow}) there would be a covering set in $M$ of size ${\leq}\mu$.

    So, let us assume that $X$ is unbounded in $\kappa$.  Since $\kappa$ has uncountable cofinality in $M$, we can use the proof of Shelah's representation theorem (see \cite[Theorem~2.23]{AbrahamMagidor} and the comments in \cite[Theorem~2.33]{AbrahamMagidor}) inside  $M$ to  manufacture a scale $\langle f_\xi \mid \xi <\kappa^+\rangle$ 
    in a product $(\prod_{i<\mu}\kappa_i^+)^M$ where $\langle \kappa_i\mid i<\mu\rangle$ is a club on $\kappa$ consisting of $M$-singular cardinals with $\mu<\kappa_0$. Moreover, this $M$-scale can be defined so that for every limit ordinal $\xi < \kappa^+$ there is a club $E_\xi  \s\xi$ with $\mathrm{otp}(E_\xi)=\cf(\xi)$ and 
    $$f_\xi=^*\sup\{f_\zeta\mid \zeta\in E_\xi\}.$$
Here, $=^*$ denotes equality modulo bounded.
    While all of this is done inside $M$, Magidor argues (see \cite[Theorem~2.23]{AbrahamMagidor}) that by shrinking this $M$-scale to a club subset of $\kappa$, one obtains a $V$-scale. More formally, utilizing  \cite[Lemma~2.19]{AbrahamMagidor} we deduce that $(\star)_\theta$ holds\footnote{For the definition of $(\star)_\theta$ see \cite[Definition~2.8]{AbrahamMagidor}.} for all regular cardinal $\theta < \kappa$.  Hence,   $\langle f_\xi\mid \xi < \kappa^+\rangle$ has an exact upper bound $h$ for which the set $\{i<\mu\mid \cf(h(i))<\theta\}$ is   bounded in $\mu$, for all $\theta < \kappa$. From this one can find a club $\langle \kappa_{i_j}\mid j<\mu\rangle \s \langle \kappa_i\mid i<\mu\rangle$ for which $h(i_j)=\kappa_{i_j}^+$ (i.e., $\kappa_{i_j}^{+M}$, by cardinal correctedness) for all $j<\mu$ (see \cite[Claim~2.24]{AbrahamMagidor}). This implies that the restriction of the $M$-scale to this sub-club,   $\langle f_\xi \restriction \{i_j\mid j<\mu\}\mid \xi < \kappa^+\rangle$,  remains  a $V$-scale (i.e., $<^*$-cofinal) in the product   $(\prod_{j<\mu} \kappa_{i_j}^+)^V$. Crucially, the unrestricted members of this sequence (i.e., the $f_\xi$'s) all belong to $M$.
We use this scale to cover $X$.
    \smallskip

    By Clause~\eqref{clause: GCH},  stationary many members $\kappa_{i_j}$ of the sequence satisfy $$\text{$M\models ``{\kappa_{i_j}}^\mu=\kappa_{i_j}^+$".}$$ 
    As the subsequent proof will make clear, we can assume   that the above cardinal-arithmetic configuration holds for all $j<\mu$.

    \smallskip

    For each $j<\mu$, $X\cap \kappa_{i_j}$ is a  subset of $\kappa_{i_j}$ of size $\leq \mu$. Thus, Clause~\eqref{clause: coverbelow} applies, yielding a cover $X_{j}\in M$, $X\cap \kappa_{i_j}\s X_j\s \kappa_{i_j}$  with $|X_j|\leq \mu$. 
    
    Let $\langle Z^j_\eta \mid \eta <\kappa_{i_j}^+\rangle$ be an enumeration of all $M$-subsets of $\kappa_{i_j}$ of cardinality $\leq \mu$. 
    For this we use   Clause~\eqref{clause: cardinalcorrectness} and \eqref{clause: GCH} and our choice of the  $\kappa_{i_j}$'s.

    \smallskip

    Let $F\in (\prod_{j<\mu}\kappa_{i_j}^+)^V$ be the map given by the formula $$\text{$F(j):=``$The first index $\eta <\kappa^+_{i_j}$ such that $X_j=Z^j_\eta$''.}\footnote{Note that $F$ may not be a member of  $M$. }$$

Since $\langle f_\xi\restriction\{i_j\mid j<\mu\}\mid \xi<\kappa^+\rangle$ is a scale in $V$,  there is an index $\xi<\kappa^+$ for which  $F<^* f_\xi\restriction \{i_j\mid j<\mu\}$.  To simplify the  forthcoming argument, we shall assume that  $F(j)<f_\xi(i_j)$ for all $j<\mu$.

\smallskip

For each $i<\mu$ let $g_i\colon f_\xi(i)\to \kappa_{i}$ be an injection in $M$, which exists by cardinal-correctness below $\kappa$ (i.e., by Clause~\eqref{clause: cardinalcorrectness}). Critically, since $f_\xi \in M$, the sequence of bijections $\langle g_i\mid i<\mu\rangle$ belongs to $M$, as well.

\smallskip

Look at the set of ordinals  $$\{g_{i_j}(F(j))\mid j<\mu\}.$$

Note that this set may be external to the inner model $M$. 

For each $j\in \mu\cap \cf(\omega)$ there is some  $l<j$ such that $g_{i_j}(F(j))<\kappa_{i_l}$ because $\langle \kappa_{i_j}\mid j<\mu\rangle$ is a club on $\kappa$. Applying Fodor's lemma in $V$ we find a stationary set $S\s \mu \cap \cof(\omega)$ and an index $j^*<\mu$ such that $g_{i_j}(F(j))<\kappa_{i_{j^*}}$ for all $j\in S$. Thus  $\{g_{i_j}(F(j))\mid j\in S\}\s \kappa_{i_{j^*}}$ and it has cardinality $\leq \mu$. While both $S$ and $\{g_{i_j}(F(j))\mid j\in S\}$ may only belong to the universe $V$ we can still invoke the $\mu^+$-covering property at $\kappa_{i_{j^*}}$ (i.e., Clause~\eqref{clause: coverbelow}) to find a covering set $U\s \kappa_{i_{j^*}}$ in $M$ with $|U|\leq \mu$.

   \smallskip

  Now we are ready to cover $X$. Define $Y:=\bigcup\{Z^j_\eta\mid  j<\mu\,\wedge\, \eta\in g^{-1}_j(U)\}.$  Clearly, $X\s Y\s \kappa$. Also $Y\in M$, as both $U$ and $\langle g_j\mid j<\mu\rangle$ belong to $M$.


\begin{claim}
   $|Y|\leq \mu$. 
\end{claim}
\begin{proof}[Proof of claim]
 By our choice, $|Z^j_\eta|\leq \mu$. We have $\mu$-many $j$'s and for each of them we have at most $\mu$-many $\eta$ such that $\eta\in g_j^{-1}(U)$ (because $g_j$ is an injection and $U$ has size ${\leq}\mu$). So, all in all, the above is a union of $\mu$-many sets of cardinality $\mu$ so, by regularity of $\mu$,  $|Y|\leq \mu$. 
\end{proof}
We are done with the proof of the lemma.
\end{proof}
Our refined version of Magidor's theorem shows that if $\HOD$ is cardinal-correct and the GCH holds then singular cardinals of uncountable cofinality cannot be the first place where covering holds between $\HOD$ and $V$:
\begin{theorem}[Compactness of covering]\label{cor: compactness of covering}
Let $\mu\geq \omega_1$ be a regular cardinal and   $\kappa$ is a singular cardinal of  cofinality $\mu$ and  $\HOD$ satisfy  the following:
    \begin{enumerate}
        \item $\HOD$ is correct about cardinals $\leq \kappa^+$.
        \item The $\mathrm{GCH}$ holds in $\HOD$ everywhere below $\kappa$.
    \end{enumerate}
    Then, $\kappa$ cannot be the first cardinal $\lambda$ for which $(\HOD, V)$ fails to have the $\mu^+$-cover property at $\lambda$.
\end{theorem}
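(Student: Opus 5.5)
The plan is to derive the theorem directly from Proposition~\ref{lemma: local Magidor} (Local Magidor Covering) with $M:=\HOD$. Suppose, towards a contradiction, that $\kappa$ is the first cardinal $\lambda$ at which $(\HOD,V)$ fails the $\mu^+$-cover property. Then $\HOD$ has the $\mu^+$-cover property at every $\alpha<\kappa$. Strictly, the hypothesis concerns cardinals $\lambda<\kappa$, but every ordinal $\alpha<\kappa$ lies in $[\lambda,\lambda^+)$ for some cardinal $\lambda<\kappa$. Using a bijection $e\colon|\alpha|\to\alpha$ in $\HOD$, which exists since $\HOD$ is cardinal-correct below $\kappa$, we transfer covering from $|\alpha|$ to $\alpha$ exactly as in the first paragraph of the proof of Proposition~\ref{lemma: local Magidor}. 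This is Clause~\eqref{clause: coverbelow}.

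Next we verify Clause~\eqref{clause: cardinalcorrectness}. Cardinal-correctness up to $\kappa^+$ is hypothesis (1), which gives $\kappa^{+\HOD}=\kappa^+$ and $\mathrm{Card}^{\HOD}\cap(\kappa^++1)=\mathrm{Card}\cap(\kappa^++1)$. The remaining requirement is $\cf^{\HOD}(\kappa)=\mu$. Since cofinalities can only go up in $\HOD$, we have $\cf^{\HOD}(\kappa)\geq\mu$, and it suffices to find a set of size $\mu$ in $\HOD$ that is cofinal in $\kappa$. To do this, take a cofinal $X\subseteq\kappa$ of size $\mu$ in $V$. If covering already held at $\kappa$ we could cover $X$, but that is exactly the failing point. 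So instead I would route through Theorem~\ref{thm: spinoffGP}, which requires $\cf(\lambda)=\cf^{\HOD}(\lambda)$ for all $\lambda<\kappa$. This follows from covering below $\kappa$. Given a regular $\lambda<\kappa$ in $\HOD$ with $\cf^V(\lambda)=\nu<\lambda$, note first that $\nu\le\mu$ fails to be the relevant bound in general. However, covering a cofinal $\nu$-sequence when $\nu\le\mu$ gives a cofinal set in $\HOD$ of size $\le\mu<\lambda$, contradicting regularity in $\HOD$ unless $\lambda\le\mu$. For $\nu>\mu$, I would instead use $\HOD\models\mathrm{GCH}$ together with cardinal-correctness. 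This is where I expect a subtlety, and I would try to invoke \cite[Theorem~3.5]{GolPov}-style arguments, or simply note that the cofinality-correctness needed is only at $\kappa$. The hypothesis of Theorem~\ref{thm: spinoffGP} at points of cofinality $\le\mu$ is what its proof actually uses (the stationary set $\kappa\cap\cof(\omega)$). Granting this, $\cf^{\HOD}(\kappa)=\mu$.

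Clause~\eqref{clause: GCH} is immediate from hypothesis (2): for every $\gamma<\kappa$ of $\HOD$-cofinality greater than $\mu$ (in fact for all $\gamma$ with $\cf^{\HOD}\gamma\neq\mu$ in general), $\HOD\models\gamma^\mu=\gamma^+$ or $\gamma$. In particular this holds on a club of $\gamma<\kappa$ of cofinality $\omega$, which is stationary, and in fact $\HOD\models\gamma^\mu\le\gamma^+$ for all $\gamma\geq\mu$ by GCH. Applying Proposition~\ref{lemma: local Magidor} then gives the $\mu^+$-cover property at $\kappa^+$, hence at $\kappa$, contradicting the choice of $\kappa$.

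The main obstacle is establishing $\cf^{\HOD}(\kappa)=\mu$, that is, deriving the cofinality-correctness hypothesis of Theorem~\ref{thm: spinoffGP} from covering below $\kappa$. Everything else is routine bookkeeping.
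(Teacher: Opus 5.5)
Your reduction to Proposition~\ref{lemma: local Magidor} with $M=\HOD$ is the paper's. Clauses (1) (the cardinal part), (2) and (3) are fine. For (2), just note that the GCH in $\HOD$ below $\kappa$ gives $\HOD\models\gamma^\mu\le\gamma^+$ for every $\gamma\in[\mu,\kappa)$; your ``club of $\gamma$ of cofinality $\omega$'' phrasing is off but harmless.

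The gap is the one step that is not bookkeeping, $\cf^{\HOD}(\kappa)=\mu$, which you end up ``granting''. Moreover, the route you propose cannot work as stated. You want to derive the hypothesis of Theorem~\ref{thm: spinoffGP}, i.e.\ $\cf(\lambda)=\cf^{\HOD}(\lambda)$ for \emph{all} $\lambda<\kappa$, from covering below $\kappa$. But the $\mu^+$-cover property is vacuous at ordinals $\le\mu$, so it says nothing about cofinalities there. For ordinals of $V$-cofinality $>\mu$ it gives no direct information either. Neither the GCH in $\HOD$ nor cardinal-correctness controls cofinalities; think of Prikry forcing over a GCH model.

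The fix is to rerun the proof of Theorem~\ref{thm: spinoffGP} rather than cite it, checking that the hypothesis is only used at ordinals of $V$-cofinality $\le\mu$. This is what the paper does.

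\emph{Step 1.} Every cardinal $\gamma\in(\mu,\kappa)$ with $\cf(\gamma)=\omega$ is $\HOD$-singular. Cover a cofinal $\omega$-sequence by some $d\in\HOD$ with $d\subseteq\gamma$ and $|d|\le\mu$. Cardinal-correctness then gives $|d|^{\HOD}\le\mu<\gamma$. These cardinals form a stationary subset of $\kappa$, so \cite[Theorem~3.5]{GolPov} gives $\delta:=\cf^{\HOD}(\kappa)<\kappa$.

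\emph{Step 2.} Take $f\colon\delta\to\kappa$ cofinal and increasing in $\HOD$, and set $h(\eta)=\min\{\alpha<\delta\mid f(\alpha)\ge\eta\}$. This map sends a club $c\subseteq\kappa$ of order type $\mu$ to a cofinal subset $h[c]$ of $\delta$. Together with $f$, this shows $\cf(\delta)=\mu$, and it uses no cofinality-correctness at all.

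\emph{Step 3.} Apply $\mu^+$-covering at $\delta<\kappa$ to a $V$-club in $\delta$ of order type $\mu$. The resulting $d\in\HOD$ is cofinal in $\delta$ with $|d|^{\HOD}\le\mu$. Hence $\delta=\cf^{\HOD}(\delta)\le\mu$.

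Your parenthetical remark, that the proof of Theorem~\ref{thm: spinoffGP} only needs the hypothesis at small-cofinality points, is the right instinct. However, it misses the second use of the hypothesis, at $\delta$ itself. It is exactly the computation $\cf(\delta)=\mu$ in Step 2 that makes covering applicable there.
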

\begin{proof}
Suppose  that $\kappa$ is the first cardinal $\lambda$ for which $(\HOD, V)$ fails to have the $\mu^+$-cover property at $\lambda$.  We intend to apply Proposition~\ref{lemma: local Magidor}, which will  give an immediate contradiction. For this, we need to verify that all the assumptions hold true. Clearly, (1)--(3) of Proposition~\ref{lemma: local Magidor} are guaranteed by our current assumptions, so   it suffices to show that $\cf^{\HOD}(\kappa)\leq \mu$.

First, notice that $\cf^{\HOD}(\kappa)<\kappa$: Indeed, the $\mu^+$-covering holds at every cardinal less than $\kappa$, ergo $E^\kappa_\omega \s (E^\kappa_{\leq \mu})^{\HOD}$, and thus there is a stationary  of $\HOD$-singular cardinals. As a result, the compactness theorem \cite[Theorem~3.5]{GolPov} applies, yielding $\cf^{\HOD}(\kappa)<\kappa$.

Arguing as in Theorem~\ref{thm: spinoffGP}, the above implies that  $\cf(\cf^{\HOD}(\kappa))=\cf(\kappa)=\mu$, so (in $V$) there is a club $c\s \cf^{\HOD}(\kappa)$ with $\mathrm{otp}(c)=\mu$. Since $\cf^{\HOD}(\kappa)<\kappa$ the $\mu^+$-covering property at $\cf^{\HOD}(\kappa)$ yields a covering set $d\in \HOD$ with $|d|\leq \mu$. This shows that $\cf^{\HOD}(\kappa)\leq \mu$.
\end{proof}



We want to prove the optimality of Proposition~\ref{lemma: local Magidor} by showing that the GCH assumption (Clause~\eqref{clause: GCH}) cannot be removed from Proposition~\ref{lemma: local Magidor}. The next general lemma gives a natural way to violate covering:
\begin{lemma}\label{lemma: cover}
    Suppose that $\kappa$ is a singular cardinal such that $2^{\cf(\kappa)} < \kappa$ and $(M, V)$ has the $\cf(\kappa)^+$-cover property at $\kappa$. Then, $|(2^\kappa)^{M}|=2^\kappa$
\end{lemma}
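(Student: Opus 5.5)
The plan is to bound $2^\kappa$ from above by $|(2^\kappa)^M|$ in $V$ via a counting argument; the reverse inequality $|\mathcal{P}^M(\kappa)|\leq 2^\kappa$ is trivial since $\mathcal{P}^M(\kappa)\subseteq\mathcal{P}(\kappa)$. Write $\mu:=\cf(\kappa)$ and work in $V$. Fix an increasing sequence $\langle \kappa_i\mid i<\mu\rangle$ cofinal in $\kappa$ (in $V$; no need for it to lie in $M$). The standard first step is $2^\kappa=(2^{<\kappa})^{\mu}$, via $X\mapsto\langle X\cap\kappa_i\mid i<\mu\rangle$, which is injective. So it suffices to inject $\mathcal{P}(\kappa)$ into a set of size $\leq |\mathcal{P}^M(\kappa)|$.

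The key idea is to cover the $\mu$-sized ``branch'' data of an arbitrary $X\subseteq\kappa$ by an $M$-set of size $\leq\mu$. Fix in $M$ a bijection (or injection) coding bounded subsets of $\kappa$ as ordinals: in $M$, let $\lambda:=|([\kappa]^{<\kappa})|^M$. Rather than that, I will work more concretely: fix in $V$ an enumeration-free approach. Given $X\subseteq\kappa$, the set $X$ itself is already a subset of $\kappa$, so instead I code via $M$-subsets of $\kappa$ directly. Consider the set $T:=\{\langle i,\alpha\rangle\}$. Concretely: for each $X\subseteq \kappa$, use the cover property to find $Y\in M$ with $|Y|\leq\mu$ covering an appropriate $\mu$-sized subset of $\kappa$ that determines $X$ together with a small amount of external information. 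The difficulty is that $X$ has size up to $\kappa$, so we cannot cover $X$ itself. So instead I plan to use $2^{<\kappa}$-many ``pieces'': observe that $|\mathcal{P}^M(\kappa)|\geq \kappa$, and aim to show $2^{<\kappa}\leq|\mathcal{P}^M(\kappa)|$ first, then handle the $\mu$-th power.

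Step 1: for $\alpha<\kappa$, $2^{\alpha}\leq |\mathcal{P}^M(\kappa)|^{?}$ — this is where I am unsure; if $M$ has few subsets of small $\alpha$ this could fail, so instead I aim directly at the inequality $2^\kappa\leq |\mathcal{P}^M(\kappa)|^\mu\cdot 2^\mu$ and then show $|\mathcal{P}^M(\kappa)|^\mu=|\mathcal{P}^M(\kappa)|$-ish. Concretely: let $\langle t_\beta\mid \beta<\theta\rangle\in V$ enumerate $\mathcal{P}^M(\kappa)$, $\theta=|(2^\kappa)^M|\geq\kappa^+$. Note $\theta\geq \kappa^+$ since $M$ contains all bounded subsets $\{\beta\}$... (at least $\kappa$ many, and $\cf$-considerations on $\mathcal P^M(\kappa)$ give $>\kappa$ via König in $M$ and cardinal counting). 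Map $X\subseteq\kappa$ to $\langle X\cap\kappa_i\mid i<\mu\rangle$; each $X\cap\kappa_i$ is a subset of $\kappa$, but possibly not in $M$. This is the main obstacle: pieces outside $M$. I would handle it by instead coding $X$ through $\mu$-sized sets: $2^\kappa=\kappa^\mu\cdot$? No—use $2^\kappa = |\prod_{i}2^{\kappa_i}|$ and the hypothesis $2^\mu<\kappa$ to get, via a Bukovsky/Silver-style tree argument, that every function $F\in\prod_i \theta$ is covered by an $M$-set $Y_F\in[\theta]^{\leq\mu}\cap M$ (transfer cover at $\kappa$ to $\theta$ through an $M$-bijection $\kappa\cdot$-indexing), whence $F$ is determined by $Y_F$ together with a function $\mu\to\mu$ (positions within $\mathrm{otp}(Y_F)<\mu^+$), giving $|{}^\mu\theta|\leq|[\theta]^{\leq\mu}\cap M|\cdot 2^\mu$. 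Finally $|[\theta]^{\leq\mu}\cap M|\leq\theta$ needs $M$-cardinal arithmetic ($\theta^\mu=\theta$ in $M$, true since $\theta=(2^\kappa)^M$ has $M$-cofinality $>\kappa>\mu$), giving $2^\kappa\leq\theta\cdot 2^\mu=\theta$. The step I expect to be hardest is making the covering of $\mu$-sequences into $\theta$ legitimate using only cover at $\kappa$, which I would try via coding each $X\cap\kappa_i$ by its index in a $V$-wellorder and exploiting that bounded pieces of $\mathcal{P}(\kappa)$ injects into $\kappa$-indexed $M$-data.
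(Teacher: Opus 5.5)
Your proposal leaves its decisive step open, and the route you sketch for it does not work. Everything hinges on covering the $\mu$ ordinals that code the pieces $X\cap\kappa_i$ by a single set in $M$ of size at most $\mu=\cf(\kappa)$. You propose to ``transfer cover at $\kappa$ to $\theta$''. But the cover property at $\kappa$ only transfers along $M$-bijections to subsets of ordinals below $\kappa^{+M}$, and nothing in the hypotheses gives covering for $\mu$-sequences into $\theta$. Worse, if $X\cap\kappa_i$ is coded by its index in a $V$-enumeration of $\mathcal{P}(\kappa_i)$, that index lies below $2^{\kappa_i}$, which need not be below $\theta$ at all.

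The missing idea is that $\kappa$ must be a strong limit. Then $2^{\kappa_i}<\kappa$, so $F_X(i):=$ the index of $X\cap\kappa_i$ in a fixed $V$-enumeration of $\mathcal{P}(\kappa_i)$ is an ordinal below $\kappa$. Hence $\mathrm{ran}(F_X)\in[\kappa]^{\leq\mu}$, and the hypothesis applies verbatim: there is $Y_X\in M$ with $\mathrm{ran}(F_X)\subseteq Y_X\subseteq\kappa$ and $|Y_X|\leq\mu$. Since $X\mapsto F_X$ is injective and $F_X\in{}^{\mu}Y_X$, the map $X\mapsto(Y_X,F_X)$ injects $\mathcal{P}(\kappa)$ into a set of size at most $|\mathcal{P}^M(\kappa)|\cdot\mu^{\mu}=|\mathcal{P}^M(\kappa)|\cdot 2^{\mu}$. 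Finally, $2^\mu<\kappa\leq|\mathcal{P}^M(\kappa)|$ absorbs the last factor. This is the paper's argument. Because the covering sets are subsets of $\kappa$, the count happens inside $\mathcal{P}^M(\kappa)$. So you need no detour through $\theta$, no bound $\theta\geq\kappa^+$ (which you did not justify), and no $M$-cardinal arithmetic at $(2^\kappa)^M$. Incidentally, $M\models(2^\kappa)^\mu=2^\kappa$ holds simply because $\kappa\cdot\mu=\kappa$, not because of the cofinality of $(2^\kappa)^M$.

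This gap cannot be bridged from $2^{\cf(\kappa)}<\kappa$ alone, because under that hypothesis the statement is false. Let $M\models\mathrm{GCH}$ and let $V=M[G]$ with $G$ generic for $\Add(\omega_2,\aleph_{\omega_1+2})^M$. This forcing adds no new $\omega_1$-sequences and preserves cardinals. Take $\kappa=\aleph_{\omega_1}$. Then $2^{\cf(\kappa)}=\aleph_2<\kappa$, and every $X\in[\kappa]^{\leq\aleph_1}$ already lies in $M$, so the $\aleph_2$-cover property at $\kappa$ holds trivially. Yet $|(2^\kappa)^M|=\aleph_{\omega_1+1}<\aleph_{\omega_1+2}=2^\kappa$. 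Here $2^{\kappa_i}=\aleph_{\omega_1+2}$ once $\kappa_i\geq\aleph_2$, which is exactly where your coding step breaks down.

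The paper's own proof tacitly uses strong limitness when it asserts $\mathrm{ran}(F_X)\subseteq\kappa$. This is harmless for its application, where $\kappa$ remains a strong limit in the extender-based Radin extension. So add that hypothesis; the short argument above then completes the proof.
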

\begin{proof}
Let $\langle \kappa_\alpha\mid \alpha < \cf(\kappa)\rangle$ be a continuous, increasing, cofinal sequence in $\kappa$. For each $\alpha <\cf(\kappa)$ let $\langle Z^\alpha_\xi\mid \xi < 2^{\kappa_\alpha}\rangle$ be an injective enumeration of $\mathcal{P}(\kappa_\alpha)$ in $V$.   For each $X\s \kappa$ consider the map $F_X\in \prod_{\alpha < \cf(\kappa)}2^{\kappa_\alpha}$ defined by $F_X(\alpha):=``$The unique $\xi<2^{\kappa_\alpha}$ such that $X\cap \kappa_\alpha = Z^\alpha_\xi$''. Clearly, the map $X\mapsto F_X$ is injective, ergo $2^\kappa\leq |\prod_{\alpha < \cf(\kappa)} 2^{\kappa_\alpha}|$. We shall see that the latter is less than or equal to $|(2^\kappa)^{M}|$. To show this we use the $\cf(\kappa)^+$-cover.

For each $X\s\kappa$, $\mathrm{range}(F_X)\in \mathcal{P}_{\cf(\kappa)^+}(\kappa)$ ergo there is $Y\in M$ with $|Y|\leq \cf(\kappa)$ such that $\mathrm{range}(F_X)\s Y\s \kappa$. Thus, $F_X\in \prod_{\alpha <\cf(\kappa)} Y$ and $$\textstyle |\prod_{\alpha <\cf(\kappa)} Y|=\cf(\kappa)^{\cf(\kappa)}=2^{\cf(\kappa)}.$$
With this, we obtain the following bounds
$$\textstyle |\prod_{\alpha < \cf(\kappa)} 2^{\kappa_\alpha}|\leq |\mathcal{P}_{\cf(\kappa)^+}^{M}(\kappa)|\cdot 2^{\cf(\kappa)} = |(2^\kappa)^{M}|\cdot 2^{\cf(\kappa)}\leq  |(2^\kappa)^{M}|,$$
because $2^{\cf(\kappa)} < \kappa \leq |(2^\kappa)^{M}|$.
\end{proof}

The next consistency result proves the optimality of Magidor's theorem:
\begin{theorem}\label{thm:optimality of Magidor}
    Assume the $\mathrm{GCH}$ holds and that $\vec{E}=\langle E_\alpha\mid \alpha<\omega_1\rangle$ is a Mitchell increasing sequence of $(\kappa,\kappa^{++})$-extenders\footnote{For the abstract definition of a $(\kappa,\theta)$-extender we refer the reader to \cite[\S26]{Kan}.} 
    such that, for each $\alpha <\omega_1$, $\kappa^{++\mathrm{Ult}(V,E_\alpha)}=\kappa^{++}$. Then, in some cardinal-preserving generic extension $W$ there is an inner model $V$ such that:
    \begin{enumerate}
         \item $(V,W)$ compute cofinalities in the same way and $\cf^{W}(\kappa)=\omega_1$
        \item The set $\{\alpha <\kappa\mid V\models 2^\alpha=\alpha^{++}\}$ contains a club in $\kappa$.
        \item $(V,W)$ has the $\omega_1$-covering property. (Moreover, $({}^\omega \mathrm{Ord})^V=({}^\omega\mathrm{Ord})^W$.)
        \item $(V,W)$ have full covering at $\alpha$ for all $\alpha <\kappa$.
        \item $(V,W)$  fails to have $\omega_2$-covering at $\kappa$. (Moreover, $(2^\kappa)^V=\kappa^+$ and $(2^\kappa)^W=\kappa^{++}$.)
    \end{enumerate}
    In particular, the $\mathrm{GCH}$ (or even $\mathrm{SSH}$) assumption cannot be removed from Magidor's theorem.\footnote{During the presentation of our results at the Hebrew University Set Theory seminar, M. Gitik informed us that the same was observed by  M. Magidor in the early 90's.  Magidor -- who was also part of the audience -- mentioned that he does not remember having proved the theorem.  Regardless, it seems that this may have been know by some experts, albeit it was never published nor  part of the folklore of the field.}
\end{theorem}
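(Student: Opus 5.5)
The plan is to construct $W$ as an extension of the GCH ground model by a Gitik-style extender-based Magidor--Radin forcing along $\vec E$. This forcing will change the cofinality of $\kappa$ to $\omega_1$ and blow up $2^\kappa$ to $\kappa^{++}$. The inner model playing the role of ``$V$'' in the statement will be an intermediate extension: the Magidor (Radin) forcing with the normal measures $\langle U_\alpha\mid\alpha<\omega_1\rangle$ derived from $\vec E$. To keep the two models apart, write $V_0$ for the ground model, $N:=V_0[C]$ for the inner model, where $C$ is the Magidor club, and $W:=V_0[G]$, where $G$ is the extender-based generic that projects onto $C$.

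Concretely, first force over $V_0$ a preparation below $\kappa$: an Easton-support iteration adding $\alpha^{++}$ Cohen subsets to each inaccessible $\alpha<\kappa$. The hypothesis $\kappa^{++\mathrm{Ult}(V,E_\alpha)}=\kappa^{++}$ is exactly what lets the extenders lift (via Woodin's surgery or the standard $(\kappa,\kappa^{++})$-extender lifting) so that each $E_\alpha$ witnesses $2^\kappa=\kappa^{++}$ in $\mathrm{Ult}$. After the preparation, the measures concentrate on $\alpha$ with $2^\alpha=\alpha^{++}$. Next force with Gitik's extender-based Magidor forcing $\mathbb P_{\vec E}$ of length $\omega_1$. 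This adds $\kappa^{++}$ $\omega_1$-sequences, so in $W$ we get $2^\kappa=\kappa^{++}$ and $\cf(\kappa)=\omega_1$. The normal generic sequence (the club $C$) is added by the projected Magidor forcing $\mathbb M_{\vec U}$, and in $N$ we have $2^\kappa=\kappa^+$, because Magidor forcing over a GCH-at-$\kappa$ model is $\kappa^+$-cc of size $\kappa^+$.

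Clause (2) then holds in $N$: on the club $C$ (and indeed on a club, since $N$ agrees with $V_0$ plus the preparation below each point) we have $2^\alpha=\alpha^{++}$. For clause (1), both forcings have the Prikry property and $\kappa^{++}$-cc, and they preserve cardinals. Cofinalities are changed only for limit points of the generic club and its relatives, and the standard analysis shows that the same ordinals change cofinality in both models, namely those determined by $C$. For clauses (3) and (4), the key fact to prove is that the quotient $\mathbb P_{\vec E}/C$ adds no new $\omega$-sequences of ordinals and no bounded subsets of $\kappa$. The bounded-subsets part follows from the factorization at any stage $\alpha$ of the Magidor sequence into a small part, which both models share, and a part whose direct extension order is highly closed. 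For $\omega$-sequences, use the Prikry property together with the fact that any countable set of ordinals in $W$ is decided by a condition whose support involves only countably many generic points, all of which are determined by $C$ below some $\alpha<\omega_1$; the closure of the direct-extension order modulo $C$ then gives that the sequence lies in $N$. Once $({}^\omega\mathrm{Ord})^N=({}^\omega\mathrm{Ord})^W$, $\omega_1$-covering is immediate, and full covering below $\kappa$ comes from $H_\kappa^N=H_\kappa^W$.

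Clause (5) then follows from Lemma~\ref{lemma: cover}. In $W$ we have $2^{\omega_1}<\kappa$, and if $\omega_2$-covering held at $\kappa$ we would get $2^\kappa=|(2^\kappa)^N|=\kappa^+$, contradicting $(2^\kappa)^W=\kappa^{++}$. I expect the main obstacle to be the quotient analysis in clauses (1) and (3): showing that the extender-based forcing adds no $\omega$-sequences over the Magidor extension. Countable cofinality points of $C$ force one to control interactions among the $\kappa^{++}$ many generic sequences at the same coordinates. My first approach would be to show that for each $\omega$-sequence name, a direct extension reduces the decision to the normal coordinates via the ``$\omega_1$-many blocks'' structure of Gitik's forcing and the countable-completeness of the $\leq^*$ order.
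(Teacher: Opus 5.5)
\textbf{There is a genuine gap: your choice of inner model.} With $N=V_0[C]$, the extension by the normal-measure Magidor club, your proof of (3) and (4) depends on the claim that $\mathbb{P}_{\vec E}/C$ adds no bounded subsets of $\kappa$. You justify this by saying that, in the factorization at a point $\nu$ of the club, the lower part is shared by both models. It is not.

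In an extender-based Magidor/Radin forcing, the lower part at a point $\nu$ of nonzero order is itself an extender-based forcing at $\nu$, using the reflected $(\nu,\nu^{++})$-extenders. It adds a generic sequence for each of $\nu^{++}$ lower coordinates, and the part below $\nu$ of each top sequence is one of these. $C$ records only the normal projection of this lower generic. So these sequences are not in $N$: they are bounded subsets of $\kappa$, and new $\omega$-sequences of ordinals when $\cf(\nu)=\omega$.

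Without your preparation you can see this by counting. $N$ satisfies GCH below $\kappa$. By Silver's theorem, $W$ (where $\cf(\kappa)=\omega_1$ and $2^\kappa=\kappa^{++}$) has $2^\nu\geq\nu^{++}$ for club many $\nu<\kappa$. At such a strong limit $\nu$ of countable cofinality, $W$ has at least $\nu^{++}$ countable subsets of $\nu$, while $N$ has only $\nu^+$ countable sets to cover them. So even $\omega_1$-covering fails below $\kappa$.

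Your Cohen preparation equalizes the cardinal arithmetic but not the sets: the lower generic sequences are still absent from $N$. The preparation is also unnecessary, and as you describe it, it is inconsistent. The lifted ultrapower is a subclass of the prepared model, where $2^\kappa=\kappa^+$, so it cannot satisfy $2^\kappa=\kappa^{++}$ with $\kappa^{++}$ computed correctly.

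So the step you flag as the main obstacle is not merely hard; it is false for this $N$. Your $N$ is essentially the paper's $V[\pi_{\{\kappa\}}[G]]$, which the paper uses only to get cofinality-correctness in (1).

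\textbf{The missing idea} is an inner model that keeps \emph{all} the information below $\kappa$ and thins out only the top coordinates. The paper forces with Merimovich's extender-based Radin forcing $\mathbb{P}$ directly over the GCH model. It takes $N=V[G\cap M]$, where $M\prec H_\Theta$, $|M|=\kappa^+$, ${}^\kappa M\subseteq M$, $V_\kappa\subseteq M$ and $\chi=M\cap\kappa^{++}$. The argument then runs as follows.
\begin{enumerate}
\item Restricting only the top pair $\langle f,T\rangle$ to $M$ is a residue map, so $G\cap M$ is generic.
\item $\mathbb{P}\cap M\cong\mathbb{P}(\langle E_\alpha\restriction\chi\mid\alpha<\omega_1\rangle)$. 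This forcing gives $2^\kappa=\kappa^+$ via properness and a count of nice names over subposets of size $\kappa$; a plain chain-condition count, like yours, no longer suffices here.
\item Every lower-part generic $G_{\vec e}$ lies in $V[G\cap M]$, so the two models have the same bounded subsets of $\kappa$. This gives (4), and also (2), because the forcing itself makes $2^\nu=\nu^{++}$ on a club.
\item The models have the same $\omega$-sequences of ordinals. By $\kappa$-properness, each such sequence has range inside an adequate model $\mathcal{M}$ of size $\kappa$. Composing with a ground-model bijection $\mathcal{M}\to\kappa$ gives an $\omega$-sequence into $\kappa$, which is bounded since $\cf^W(\kappa)=\omega_1$, and hence lies in some $V[G_{\vec e}]$.
\end{enumerate}
Your derivation of (5) from Lemma~\ref{lemma: cover} matches the paper's, and it goes through once this $N$ is used.
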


\begin{remark}
    The method used in the proof of the above theorem is general enough so as to give models $(V,W)$ where $\kappa$ has cofinality $\mu$ for any prescribed regular cardinal $\mu<\kappa$ and $(2^\kappa)^{W}$ is arbitrarily large.

    \end{remark}


The blanket assumptions for the rest of the section are as follows:

\begin{setup}\label{ref: setup}
\rm{We assume  $\mathrm{GCH}$ holds and fix $\vec{E}= \langle E_\alpha \mid \alpha< \omega_1\rangle$  a Mitchell increasing sequence of $(\kappa,\kappa^{++})$-extenders. We denote by  $\mathbb{P}$ the  Extender Based Radin forcing as defined by Merimovich in \cite{Merimovich} and fix a generic filter $G\s \mathbb{P}$.\footnote{We shall assume the reader is acquainted with Merimovich's forcing.} We also fix  a regular cardinal $\Theta$ such that  $2^{2^{|\mathbb{P}|}}<\Theta$. Utilizing  $\mathrm{GCH}$, we can construe an elementary submodel $M\prec H_\Theta$, with:
\begin{itemize}
    \item $\{\mathbb{P}({\vec{E}}),\vec{E},\kappa^{++}\}\cup V_\kappa\s M$,
    \item $|M|=\kappa^{+}$, ${}^\kappa M\s M$,
    \item and $\chi:=M\cap \kappa^{++}\in \kappa^{++}$ and $\cf(\chi)=\kappa^+$.
\end{itemize}  }
\end{setup}

\smallskip

 Our goal is to show  that 
$(V[G\cap M], V[G])$
is a pair of models witnessing the statement of the above theorem. 

\begin{notation}
Hereafter we denote $\mathbb{P}:=\mathbb{P}(\vec{E})$ and $\mathbb{P}_M:=\mathbb{P}\cap M$.   
\end{notation}

\begin{lemma}\label{lemma: residue}
    There is a  residue map $\restriction_M\colon \mathbb{P}\to \mathbb{P}\cap M$. 
\end{lemma}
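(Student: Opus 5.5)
The residue map will be the natural ``restriction to $M$'' of a Merimovich condition. Recall that a condition $p\in\mathbb{P}$ is a finite sequence of pairs $p_0{}^\frown\cdots{}^\frown p_n$. Each lower pair $p_i$ ($i<n$) lives in $V_\kappa$, and is of the form $(f_i,\ldots)$ or $(f_i,A_i)$. The top pair $p_n=(f,A)$ consists of:
\begin{itemize}
\item a partial function $f$ with $|\dom f|\le\kappa$ and $\dom f\subseteq\kappa^{++}$ (with $\kappa\in\dom f$);
\item a measure-one tree/set $A$ for the extender sequence $\vec E(\dom f)$ projected to $\dom f$.
\end{itemize}
We define $p\restriction_M$ by keeping all lower parts unchanged. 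On the top part we put $f\restriction\chi$ (that is, $f\restriction(\dom f\cap M)$) and the projection $A\restriction(\dom f\cap\chi)=\pi_{\dom f,\dom f\cap\chi}[A]$.

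The first step is to check that $p\restriction_M$ lies in $\mathbb{P}\cap M$. The lower parts belong to $V_\kappa\subseteq M$. Since $|\dom f|\le\kappa$ and $\chi\in\kappa^{++}$ with $M\cap\kappa^{++}=\chi$, the set $\dom f\cap\chi$ is a subset of $M$ of size $\le\kappa$, so ${}^\kappa M\subseteq M$ puts $f\restriction\chi$ in $M$. The projected measure-one set is a subset of $V_\kappa$, hence also in $M$, and $\vec E\in M$. We must also verify that the projection is still measure one for the restricted extender sequence, which is standard because projections of measure-one sets are measure one.

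The second step is the residue property. Let $q\le p\restriction_M$ with $q\in M$, and find a common extension of $q$ and $p$. First reduce to the case where $q$ is a direct extension of a one-point (or finitely many point) extension of $p\restriction_M$. Each such point $\nu$ taken from the projected tree lifts to a point $\bar\nu$ in $A$ with $\bar\nu\restriction\chi=\nu$. Hence $p$ can be extended by the same Prikry points ``$p\cat\bar\nu$'', and this extension restricts to $(p\restriction_M)\cat\nu$. So we may assume $q\le^*p\restriction_M$. We then amalgamate by setting $r:=q\cup(p\setminus\text{restricted part})$ on the top function, namely
$$f^r:=f^q\cup\bigl(f^p\restriction(\dom f^p\setminus\chi)\bigr),$$
with lower parts those of $q$. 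The measure-one set of $r$ is the intersection of the pullback of $A^q$ with $A^p$, projected on the common domain. Because $\dom f^q\subseteq\chi$ and $\dom f^p\setminus\chi$ are disjoint, $f^r$ is a function. The tree is measure one because it is an intersection of two measure-one sets for $\vec E(\dom f^r)$, and one then checks $r\le p,q$.

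The main obstacle is the lifting step for non-direct extensions. When $p\restriction_M$ is extended by a point $\nu$, the new lower part of $q$ uses values $\nu$ on $\dom f\cap\chi$. Making $p$ compatible requires choosing $\bar\nu\in A^p$ extending $\nu$ while also respecting the Merimovich requirement that $\kappa(\bar\nu)$ dominate the relevant $f^p$-values outside $\chi$. My first attempt is to shrink $A^p$ beforehand, but it cannot depend on $q$. So I would instead argue directly that the fibres of $\pi_{\dom f,\dom f\cap\chi}$ over measure-one many $\nu$ are nonempty in $A^p$ by definition of the projection. The remaining conditions on $f^p(\alpha)$ for $\alpha\notin\chi$ are then handled by the usual ``$p\cat\bar\nu$'' recipe, which only modifies $f^p$ by inserting values $\bar\nu(\alpha)$. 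Induction on the number of added points finishes the argument.
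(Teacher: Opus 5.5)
Your map is the paper's map, and your overall plan is also the paper's: lift the new point $\nu$ to some $\bar\nu$ in $p$'s tree with $\bar\nu\restriction\chi=\nu$, then amalgamate by unions of functions and intersections of pulled-back trees. The gap is the reduction ``$p\cat\bar\nu$ restricts to $(p\restriction_M)\cat\nu$, so we may assume $q\le^* p\restriction_M$''. This fails as soon as $\nu$ has positive order, which cannot be avoided in a Radin forcing of length $\omega_1$.

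\emph{Why the reduction fails.} Adding $\bar\nu$ to $p$ creates a new lower block $\langle f\downarrow\bar\nu, T\downarrow\bar\nu\rangle$. Its function is defined on the coordinates $\bar\nu(\alpha)$, $\alpha\in\dom\bar\nu$, including those with $\alpha\notin\chi$. Since $\restriction_M$ does not touch lower blocks, $(p\cat\bar\nu)\restriction_M$ keeps this whole block. The corresponding block of $(p\restriction_M)\cat\nu$ is only $\langle (f\restriction\chi)\downarrow\nu,(T\restriction\chi)\downarrow\nu\rangle$. So $(p\cat\bar\nu)\restriction_M$ is strictly stronger than $(p\restriction_M)\cat\nu$, and $q$ need not extend it. Your $r$ takes its lower parts from $q$, so it does not extend $p\cat\bar\nu$. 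Its top values leave $\bar\nu$ as the only point through which it could extend $p$, so $r$ is not below $p$ at all.

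\emph{What is missing.} You need an amalgamation at the new lower block itself. Keep $q$'s old lower blocks. Take the union of the function of $q$'s new lower block with $f^p\downarrow\bar\nu$, and intersect the pulled-back lower trees. Only then do your top-level amalgamation. These steps are exactly the paper's $p'_0=r_{\leftarrow\leftarrow}$, $p'_1=\langle f^{r_{\leftarrow\rightarrow}}\cup(f\downarrow\eta),S_1\rangle$ and $p'_2=\langle f^{r_\rightarrow}\cup(f\cat\eta),S_2\rangle$.

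This step constrains the choice of lift. $q$'s new lower function may already be defined, with different values, at some coordinates $\bar\nu(\alpha)$ with $\alpha\in\dom\bar\nu\setminus\chi$, and then the union is not a function. So $\bar\nu$ must be chosen after $q$ to avoid such clashes, for instance with its coordinates outside $\chi$ missing the domain of $q$'s lower function. What you need is that the fibre over $\nu$ be large enough to dodge a small set of coordinates, shrinking the projected tree to such $\nu$ if necessary. Nonemptiness of the fibre, which is what you invoke, is not enough.

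Two smaller points. The issue you do flag, that $\bar\nu$ dominate the $f^p$-values, is automatic because $\bar\nu$ lies in $p$'s tree. Also, the projected tree is not a subset of $V_\kappa$, since its objects have domains inside $\chi$. It lies in $M$ because it is a set of at most $\kappa$ elements of $M$ and ${}^\kappa M\subseteq M$.
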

\begin{proof}
Let us define $ \restriction_M$ as follows: Given a condition $p=p_{\leftarrow}{}^\smallfrown \langle f, T\rangle$ in $\mathbb{P}$, $$p\restriction_M:=p_{\leftarrow}{}^\smallfrown \langle f\restriction M, T\restriction M\rangle,$$
where $T\restriction M$ is defined as  $\{\langle \nu_0\restriction M,\dots, \nu_{n-1}\restriction M\rangle\mid \langle \nu_0,\dots, \nu_{n-1}\rangle\in T\}.$

The first observation is that $p\restriction_M$ is a condition in $\mathbb{P}_M$. This is a routine checking using the fact that $V_\kappa\s M$ and that ${}^\kappa M\s M$. 

\smallskip

Let us show that $\restriction_M$ is a residue map. Let $r\leq_{\mathbb{P}_M} p\restriction_M$, $r\in M$, and let us show that $r$ is $\leq_{\mathbb{P}}$-compatible with $p$. First, by elementarity, $r\leq_{\mathbb{P}_M} p\restriction_M$ is the same as $r\leq_{\mathbb{P}} p\restriction_M$. By definition, there is $\vec\nu\in T\restriction M$ such that $$\text{$r_\leftarrow \leq (p\restriction_M\cat \vec\nu)_{\leftarrow}$ and $r_\rightarrow\leq^*_{\mathbb{P}} (p\restriction_M\cat \vec\nu)_\rightarrow$}.\footnote{Here $\leq$ denotes the product ordering associated to the `lower' extender based forcing.}$$
To simplify notations let us assume that $\vec\nu =\langle \nu\rangle$.

Then, $r_\leftarrow$ decomposes as $r_{\leftarrow\leftarrow}{}^\smallfrown r_{\leftarrow\rightarrow}$ and similarly $(p\restriction_M\cat \langle \nu\rangle )_{\leftarrow}$ decomposes as $(p\restriction_M\cat \langle \nu\rangle)_{\leftarrow\leftarrow}{}^\smallfrown (p\restriction_M\cat \langle \nu\rangle)_{\leftarrow\rightarrow}$. Notice that $(p\restriction_M\cat \langle \nu\rangle)_{\leftarrow\leftarrow}$ is just $p_{\leftarrow}$ due to the definition of the map $\restriction_M$. Set $p'_0$ to be $r_{\leftarrow\leftarrow}$.

Now, let us analyze $(p\restriction_M\cat \langle \nu\rangle)_{\leftarrow\rightarrow}$. By definition of $\cat \langle \nu\rangle$, this is  $$\langle (f\restriction M)\downarrow \langle \nu\rangle, (T\restriction M)\downarrow \langle \nu\rangle\rangle;$$
more explicitly:
\begin{itemize}
    \item $(f\restriction M)\downarrow \langle \nu\rangle$ is the function with domain $\{\nu(\bar \alpha)\mid o(\bar\alpha)\neq 0\}$ and
    $$\left((f\restriction M)\downarrow \langle \nu\rangle\right)(\nu(\bar\alpha)):=(f\restriction M)(\bar \alpha)\restriction k_\nu=f(\bar \alpha)\restriction k_\nu,$$
    where $k_\nu:=\min\{l<|f(\bar\alpha)|\mid\forall l\leq i<|f(\bar\alpha)|\, (o(f_i(\bar\alpha))<o(\nu(\bar\alpha))) \}$.
    \item $(T\restriction M)\downarrow \langle \nu\rangle$ is the projected tree $$\{\langle \mu_0\downarrow \nu, \dots, \mu_{n-1}\downarrow\nu\rangle\mid \vec\mu\in (T\restriction M)\},$$
    which is the same as $\{\langle \mu_0\downarrow \nu, \dots, \mu_{n-1}\downarrow\nu\rangle\mid \vec\mu\in T\}.$
\end{itemize}
Let $\eta\in T$ be a $\dom(f)$-object such that $\eta\restriction M=\nu$. 

We define $p'_1$ to be $\langle f^{r_{\leftarrow\rightarrow}}\cup (f\downarrow \eta), S_1\rangle$ where $S_1$ is the intersection of the pullbacks to  $\dom(f^{r_{\leftarrow\rightarrow}})\cup \dom(f\downarrow\eta)$ of the trees $T^{r_{\leftarrow\rightarrow}}$ and $(T\downarrow\eta)$. 

Finally, let $p'_2$ to be $\langle f^{r_{\rightarrow}}\cup (f\cat \eta), S_2\rangle$ where $S_2$ is defined analogously.

Finally, we set $p'=p'_0{}^\smallfrown p'_1{}^\smallfrown p'_2$. It follows that this is a condition in $\mathbb{P}$ and that $p'$ witnesses compatibility of $r$ and $p$.
\end{proof}
The next follows from the previous lemma and Proposition~\ref{cor: submodel}:
\begin{corollary}
    $G\cap M$ is a $\mathbb{P}_M$-generic filter.\qed
\end{corollary}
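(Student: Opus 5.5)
The corollary follows from Lemma~\ref{lemma: residue} together with Proposition~\ref{cor: submodel}. The plan is to check that the hypotheses of that proposition hold in Setup~\ref{ref: setup}, and then to rerun its two-part argument for the specific residue map $\restriction_M$.

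\emph{Step 1: $\mathbb{P}_M$ is a complete suborder of $\mathbb{P}$.} Let $A\s\mathbb{P}_M$ be a maximal antichain of $\mathbb{P}_M$ and let $p\in\mathbb{P}$ be arbitrary. Since $p\restriction_M\in\mathbb{P}_M$, maximality of $A$ gives some $q\in A$ and some $r\in\mathbb{P}_M$ with $r\leq q$ and $r\leq p\restriction_M$. The residue property from Lemma~\ref{lemma: residue} then makes $r$ compatible with $p$, so $p$ is compatible with $q$. Hence $A$ is maximal in $\mathbb{P}$ as well. One point needs care: the ordering of $\mathbb{P}_M$ must be the restriction of $\leq_{\mathbb{P}}$. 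This holds because $\mathbb{P}\in M\prec H_\Theta$, and the proof of Lemma~\ref{lemma: residue} already uses it.

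\emph{Step 2: $G\cap M$ is a $\mathbb{P}_M$-generic filter.} $G\cap M$ is upward closed in $\mathbb{P}_M$, since it is the trace of an upward closed set. Any two of its members have a common extension in $G$; by elementarity of $M$ (the statement ``$p,q$ are compatible'' is witnessed inside $M$), they also have a common extension in $\mathbb{P}_M$. To get a witness that lies in $G\cap M$ itself, note that by Step~1, for any $p_0,p_1\in G\cap M$ the dense-below set $D=\{r\in\mathbb{P}_M\mid r\leq p_0,p_1 \text{ or } r\perp p_0 \text{ or } r\perp p_1\}$ contains a maximal antichain of $\mathbb{P}_M$. That antichain is maximal in $\mathbb{P}$, so $G$ meets it, and the element of $G$ in it must lie below both $p_0$ and $p_1$. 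For genericity, let $D\s\mathbb{P}_M$ be dense in $\mathbb{P}_M$ (in $V$). Choose a maximal antichain $A\s D$ of $\mathbb{P}_M$; by Step~1 it is a maximal antichain of $\mathbb{P}$, so $G\cap A\neq\emptyset$ and hence $G\cap M\cap D\neq\emptyset$.

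\emph{Main obstacle.} No real obstacle remains once Lemma~\ref{lemma: residue} is granted. The only thing to watch is the compatibility of members of $G\cap M$ inside $\mathbb{P}_M$, and the antichain argument in Step~2 handles this cleanly.
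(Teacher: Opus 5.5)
Your proposal is correct and follows the paper's route. The paper likewise derives the corollary by applying Proposition~\ref{cor: submodel} to the residue map $\restriction_M$ of Lemma~\ref{lemma: residue}, with $M\prec H_\Theta$ and $\mathbb{P}\in M$ as in the Setup. Your extra antichain argument, which produces a common extension inside $G\cap M$ itself, is a valid strengthening of the filter property; the paper's definition of a filter only asks for pairwise compatibility.
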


Given a condition $p\in \mathbb{P}$ the \emph{Factor Lemma} for the Extender Based Radin forcing \cite[p. 461]{Merimovich} ensures that $\mathbb{P}/p$ is isomorphic to a two-step product
$$\mathbb{P}_{\vec e}/p_0\times \mathbb{P}/p_1$$
where $p_0$ is an initial segment of $p_{\leftarrow}$, $\vec{e}\in V_\kappa$ is a  sequence of extenders and $\mathbb{P}_{\vec e}$ is the corresponding Extender Based Radin forcing.  Let $\pi_{\vec{e}}\colon \mathbb{P}/p\to \mathbb{P}_{\vec e}/p_0$ be the projection induced from the previous isomorphism.  
\begin{lemma}
     $\pi_{\vec{e}}\colon \mathbb{P}_M/p\restriction_M \to \mathbb{P}_{\vec{e}}/p_0$ is a projection and the diagram 
$$
\begin{tikzcd}
    \mathbb{P}/p \arrow[r, "{\pi_{\vec{e}}}"] \arrow[d, "\restriction_M"'] 
     & \mathbb{P}_{\vec{e}}/p_0  \\
\mathbb{P}_M/p\restriction_M \arrow[ru, "{\pi_{\vec{e}}\restriction \mathbb{P}_M}" '] &  
\end{tikzcd}
$$
is commutative.
\end{lemma}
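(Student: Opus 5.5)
The plan is to unwind the Factor Lemma and observe that the map $\pi_{\vec e}$ only reads the part of a condition that lies below the chosen coordinate. This part is the initial segment $p_0$ of $p_{\leftarrow}$, which lives in $V_\kappa\s M$. The residue map $\restriction_M$ of Lemma~\ref{lemma: residue} leaves exactly this part untouched. Recall that $\restriction_M$ sends $p_{\leftarrow}{}^\smallfrown\langle f,T\rangle$ to $p_{\leftarrow}{}^\smallfrown\langle f\restriction M,T\restriction M\rangle$. Hence for $q\leq p$ the lower part $q_{\leftarrow}$ is unchanged by $\restriction_M$. Since the Factor Lemma isomorphism sends $q$ to the pair (its restriction below the $\vec e$-coordinate, its upper part), we get $\pi_{\vec e}(q)=\pi_{\vec e}(q\restriction_M)$. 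This is commutativity of the diagram.

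One check is needed first: $p\restriction_M$ factors the same way, with the same $p_0$. The factorization point is determined by the lower part of $p$ together with $\vec e$, and $\vec e\in V_\kappa\s M$. So $\mathbb{P}_M/p\restriction_M$ sits inside $\mathbb{P}/p\restriction_M$, which factors as $\mathbb{P}_{\vec e}/p_0\times\mathbb{P}/(p\restriction_M)_1$, and $\pi_{\vec e}\restriction\mathbb{P}_M$ makes sense.

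Next I verify the projection clauses for $\pi_{\vec e}\restriction \mathbb{P}_M$. The first two clauses, preservation of the top element of the cone and monotonicity, are inherited from $\pi_{\vec e}$ on $\mathbb{P}/p$, because the order of $\mathbb{P}_M$ is the restriction of that of $\mathbb{P}$ (by elementarity of $M$). For the third clause, take $q\in\mathbb{P}_M/p\restriction_M$ and $s\leq\pi_{\vec e}(q)$ in $\mathbb{P}_{\vec e}/p_0$. Let $q'$ be $q$ with its $\vec e$-component replaced by $s$, i.e.\ under the Factor Lemma isomorphism, $q'$ corresponds to $(s,q_1)$. Then $q'\leq q$ and $\pi_{\vec e}(q')=s$. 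The only point to check is $q'\in M$. This holds because $s\in V_\kappa\s M$, $q\in M$, and the isomorphism is definable in $H_\Theta$ from parameters in $M$. Alternatively, one can use that the lower part of a condition is a finite sequence of objects from $V_\kappa$, so swapping a lower component keeps us inside $M$.

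The main obstacle is the bookkeeping in the Factor Lemma. Extending below $p_0$ could in principle interact with the upper components, since trees are pulled back along $\downarrow$ operations. So I must confirm that the isomorphism is literally "replace the lower block". I would cite Merimovich's Factor Lemma for this, noting that the lower block's extenders $\vec e$ and the coordinates involved are all below $\kappa$, hence in $M$. Given that, both the commutativity and the third projection clause reduce to the observation that $\restriction_M$ and the swap of lower blocks act on disjoint parts of a condition, so they commute.
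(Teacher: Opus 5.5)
Your proposal is correct and is essentially the paper's argument. Your lift of $s$, obtained by replacing the $\mathbb{P}_{\vec e}$-block of $q$ by $s$, is exactly the paper's $q'=r{}^\smallfrown\langle q_i\mid i\in[i^*,\ell(q))\rangle$, which lies in $\mathbb{P}_M$ because $V_\kappa\subseteq M$. Commutativity follows from the same observation that $\restriction_M$ leaves the lower part untouched.

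The one difference is how you locate where $q$ splits. The paper uses the residue property to find $p'\leq p,q$ with $p'\restriction_M=q$, so that $p'_{\leftarrow}=q_{\leftarrow}$. You instead apply the Factor Lemma directly to $p\restriction_M$.

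One phrasing point. The first two projection clauses are inherited from that factorization of $\mathbb{P}/p\restriction_M$, not from $\pi_{\vec e}$ on $\mathbb{P}/p$, since $\mathbb{P}_M/p\restriction_M\nsubseteq\mathbb{P}/p$ in general.
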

\begin{proof}
$\pi_{\vec e}\restriction \mathbb{P}_M$ is clearly order-preserving. Let $q\in \mathbb{P}_M$ be with  $q\leq p\restriction_M$ and $r\leq_{\mathbb{P}_{\vec e}} \pi_{\vec{e}}(q)$. Say $q=\langle q_i\mid i<\ell(q)\rangle$. Since $\restriction_M$ is a residue map, there is $p'\in \mathbb{P}$ such that $p'\leq p, q$ and $p'\restriction M=q$. In particular, $p'_{\leftarrow}=q_{\leftarrow}$. Since $\vec{e}$ for some $i_0<\ell(p')$, $\langle p'_i\mid i<i_0\rangle\in \mathbb{P}_{\vec e}$ there is  $i^*<\ell(q)$ such that  $\langle q_i\mid i<i^*\rangle \in \mathbb{P}_{\vec{e}}$. Hence, $\pi_{\vec{e}}(q)=\langle q_i\mid i<i^*\rangle$. Then, $r\leq_{\mathbb{P}_{\vec{e}}} \langle q_i\mid i<i^*\rangle$. Letting $q'=r{}^\smallfrown \langle q_i\mid i\in [i^*,\ell(q))\rangle$ we obtain a condition in $\mathbb{P}_M$, stronger than $q$ such that $\pi_{\vec{e}}(q')=r$.
\end{proof}

In particular, if $G_{\vec e}$ is the $\mathbb{P}_{\vec e}\,$-generic filter induced by $\pi_{\vec e}$ and $G$, 
\begin{equation}\label{eq: inclusion between models}
\tag{*}  V[G_{\vec e}]\s V[G_M]\s V[G].  
\end{equation}

\begin{lemma}
    For each $\alpha < \kappa$, $\mathcal{P}^{V[G]}(\alpha)=\mathcal{P}^{V[G_M]}(\alpha)$.
\end{lemma}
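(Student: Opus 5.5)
Since $G_M$ lies in $V[G]$ and $\mathcal{P}(\alpha)$ is monotone along the chain \eqref{eq: inclusion between models}, the inclusion $\mathcal{P}^{V[G_M]}(\alpha)\subseteq\mathcal{P}^{V[G]}(\alpha)$ is immediate. For the converse, the plan is to show that every bounded subset of $\kappa$ in $V[G]$ already lies in $V[G_{\vec e}]$ for a suitable $\vec e\in V_\kappa$ coming from the Factor Lemma. By \eqref{eq: inclusion between models} that set then lies in $V[G_M]$.

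Fix $\alpha<\kappa$ and $X\in\mathcal{P}^{V[G]}(\alpha)$ with a $\mathbb{P}$-name $\dot X$. By density, and using that the generic Radin club is unbounded in $\kappa$, pick $p\in G$ whose lower part $p_{\leftarrow}$ contains a point above $\alpha$. Applying the Factor Lemma at that point gives
$$\mathbb{P}/p\cong\mathbb{P}_{\vec e}/p_0\times\mathbb{P}/p_1,$$
with $\vec e\in V_\kappa$, $p_0\in\mathbb{P}_{\vec e}$, and the upper factor $\mathbb{P}/p_1$ of lower component above some $\nu>\alpha$. Merimovich's analysis of the upper factor then applies: it satisfies the Prikry property and its direct-extension order $\leq^*$ is $\nu^+$-closed. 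With these two facts, the standard argument shows that $\mathbb{P}/p_1$ adds no new subsets of $\alpha$ over $V[G_{\vec e}]$. Concretely, working in $V[G_{\vec e}]$, we decide the statements ``$\check\beta\in\dot X$'' for $\beta<\alpha$ one at a time along a $\leq^*$-decreasing sequence of length $\alpha$, using the Prikry property for each decision and closure at limit stages, and finish with a density argument in $\mathbb{P}/p_1$. Since the lower factor $\mathbb{P}_{\vec e}$ is small, its chain condition keeps the relevant $\leq^*$-closure intact in $V[G_{\vec e}]$. The conclusion is $X\in V[G_{\vec e}]\subseteq V[G_M]$.

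The main obstacle is making this factor analysis precise for Merimovich's forcing. One has to verify that the upper part's $\leq^*$ really is sufficiently closed, and that the Prikry property for the product still holds after forcing with the lower factor. This is the extender-based analogue of the bounded-subsets lemma for Radin forcing, and I would cite Merimovich \cite{Merimovich} for it rather than reprove it. An alternative, more self-contained route is to use the preceding lemma directly. Because $\pi_{\vec e}$ factors through $\restriction_M$, it suffices to know the bounded-subsets fact for $V[G]$ over $V[G_{\vec e}]$; the intermediate model $V[G_M]$ then inherits the equality from the sandwich \eqref{eq: inclusion between models}.
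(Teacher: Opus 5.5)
Your proposal is correct and is essentially the paper's proof, which likewise invokes the usual Radin-style fact that every bounded subset of $\kappa$ in $V[G]$ lies in $V[G_{\vec e}]$ for a suitable $\vec e\in V_\kappa$, and then applies the sandwich \eqref{eq: inclusion between models}. One small correction to your sketch of that fact: the $\leq^*$-closure of the upper factor is not preserved in $V[G_{\vec e}]$ (you only get distributivity via Easton's lemma, which is all the iteration needs), and for this the upper factor's closure must exceed the chain condition of $\mathbb{P}_{\vec e}$, which is $\nu^{++}$ rather than $\nu^+$, so the $\nu^+$-closure you quote is not enough---though the actual closure, up to the next coordinate of $p$ above $\nu$, is.
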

\begin{proof}
    This follows from the usual Radin-like analysis establishing that every bounded subset of $\kappa$ in $V[G]$ is a member of $V[G_{\vec e}]$. Indeed, combining this with (*) above we obtain the desired equality.
\end{proof}
\begin{remark}
    It is crucial here to note that for every $\alpha<\omega_1$, $\chi=(\kappa^{++})^{M_{E_\alpha\restriction\chi}}$. 
    Indeed, since $\bar{M}$ is closed under $\kappa$ sequences, every function  $f:[\kappa]^n\to\kappa$ is a member of $\bar{M}$. Also, $E_\alpha\restriction\chi\in \bar{M}$. It follows that $(j_{E_\alpha\restriction\chi}(V_{\kappa}))^{\bar{M}}=j_{E_\alpha\restriction\chi}(V_{\kappa})$. Since $\chi=(\kappa^{++})^{\bar{M}}$, $\bar{M}\models E_\alpha\restriction\chi\text{ is a }(\kappa,\kappa^{++})$-extender.
    Hence $\bar{M}$ thinks that the ultrapower by $E_\alpha\restriction\chi$ computes $\kappa^{++}$ the same as in $\bar{M}$. It follows that $\chi=(\kappa^{++})^{\bar{M}}=(\kappa^{++})^{(M_{E_\alpha\restriction\chi})^{\bar{M}}}=(\kappa^{++})^{M_{E_\alpha\restriction\chi}}$.  
\end{remark}
\begin{lemma}
    $\mathbb{P}_M$ is isomorphic\footnote{Both with respect to the regular and the Prikry order.} to the Extender Based Radin $\mathbb{P}(\vec{F})$ where $\vec{F}$ denotes the sequence of truncated extenders $\langle E_\alpha\restriction \chi\mid \alpha<\omega_1\rangle$. 
\end{lemma}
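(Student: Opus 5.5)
The natural approach is to use the transitive collapse of $M$. Let $\pi\colon M\to\bar M$ be the Mostowski collapse. I intend to show that $\pi\restriction\mathbb{P}_M$ is an isomorphism onto $\mathbb{P}(\vec F)$ as computed in $\bar M$, and that this coincides with $\mathbb{P}(\vec F)$ as computed in $V$.

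Since $V_\kappa\cup\kappa^{++}\cap M=\chi\subseteq M$ and $\chi$ is an initial segment of the ordinals, $\pi$ is the identity on $V_\kappa$ and on $\chi$. Hence $\pi(\kappa^{++})=\chi$, and for each $\alpha<\omega_1$ (note $\omega_1\subseteq M$) we have $\pi(E_\alpha)=E_\alpha\restriction\chi$. This uses that an extender is determined by its measures $E_\alpha(a)$ for $a\in[\kappa^{++}]^{<\omega}$, and that these measures are subsets of $\mathcal P(V_\kappa)$ fixed by $\pi$ because ${}^\kappa M\subseteq M$. Thus $\pi(\vec E)=\vec F$. By elementarity, $\pi(\mathbb{P}(\vec E))=\mathbb{P}(\vec F)^{\bar M}$ and $\pi\restriction\mathbb{P}_M$ is an isomorphism between $\mathbb{P}_M$ and $\mathbb{P}(\vec F)^{\bar M}$. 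It preserves both $\le$ and $\le^*$, since those relations are defined in $H_\Theta$ from $\vec E$ and are sent by $\pi$ to the corresponding relations of $\bar M$.

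The concrete action of $\pi$ on a condition should be checked. For $p=p_\leftarrow{}^\smallfrown\langle f,T\rangle\in M$, the stem $p_\leftarrow$ lies in $V_\kappa$ and is fixed. The function $f$ has domain a subset of $\kappa^{++}\cap M=\chi$ of size $\le\kappa$ with values in $V_\kappa$, so it is fixed. The tree $T$ consists of finite sequences of objects $\nu$ with $\dom\nu\subseteq\dom f$ of size $<\kappa$, so $T$ is fixed as well. Consequently $\pi$ acts as the identity on $\mathbb{P}_M$, and $\mathbb{P}_M=\mathbb{P}(\vec F)^{\bar M}$ literally.

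The main obstacle is absoluteness: showing $\mathbb{P}(\vec F)^{\bar M}=\mathbb{P}(\vec F)^V$. Conditions in $\mathbb{P}(\vec F)$ are objects of size $\le\kappa$ built from $V_\kappa$ and $\chi$, so ${}^\kappa\bar M\subseteq\bar M$ gives that every $V$-condition lies in $\bar M$. The remaining issue is that the definitions refer to $\kappa^{++}$ and to measure-one sets for the extenders, namely through the requirement $|\dom f|\le\kappa$, the tree measure-one conditions and the ordering of objects. These must agree between $\bar M$ and $V$. Here I would invoke the preceding Remark: $\chi=(\kappa^{++})^{\bar M}=(\kappa^{++})^{M_{E_\alpha\restriction\chi}}$, so each $E_\alpha\restriction\chi$ is a $(\kappa,\chi)$-extender in $V$ with the same ultrapower computation relevant to the forcing. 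Moreover, the measures $E_\alpha\restriction\chi(d)$ for $d\in[\chi]^{\le\kappa}$ are computed identically in $\bar M$ and $V$ by $\kappa$-closure. The Mitchell order $E_\beta\restriction\chi\in M_{E_\alpha\restriction\chi}$ should also transfer, by applying elementarity of $\pi$ to $E_\beta\in\mathrm{Ult}(V,E_\alpha)$ together with the closure of $\bar M$. With these in hand, every clause of Merimovich's definition is absolute between $\bar M$ and $V$, which yields the claimed isomorphism for both orders.
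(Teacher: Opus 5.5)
Your proposal is correct and follows essentially the paper's route: collapse $M$, use that $\pi$ fixes $V_\kappa$, $\chi$ and $\mathcal{P}(V_\kappa)$ to get $\pi(\vec E)=\vec F$, then transfer the definition of $\mathbb{P}(\vec F)$ from $\bar M$ to $V$ via ${}^\kappa\bar M\subseteq\bar M$. Your observation that $\pi$ fixes $\mathbb{P}_M$ pointwise, together with your checks of the tree measures and the Mitchell order (via the Remark), makes explicit details the paper leaves implicit.
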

 \begin{proof}
Let $\pi\colon M\to \bar{M}$ be the Mostowski collapse map. By elementarity, $$\text{$M\models ``\mathbb{P}(\vec{E})$ is the Extender Based Radin relative to $\vec{E}$".}$$ Thus, 
$$\bar{M}\models \text{$``\pi(\mathbb{P}(\vec{E}))$ is the Extender Based Radin relative to $\pi(\vec{E})$"}.$$
Observe that $\crit(\sigma)=\chi$ and $\sigma(\chi)=\kappa^{++}$ where $\sigma:=\pi^{-1}$.  This yields $\pi(E_\xi)=\{\pi(E)_{\xi,a}\mid a\in [\chi]^{<\omega}\}$ which equals $\{\pi(E_{\xi,a})\mid a\in [\chi]^{<\omega}\}=\{E_{\xi,a}\mid a\in [\chi]^{<\omega}\}=E_\xi\restriction \chi$.\footnote{To justify $\pi(E_{\xi,a})=E_{\xi,a}$  we have used that $\pi(X)=X$ for every $X\in [\kappa]^n$ and $n<\omega$.} As a result we have that $\pi(\vec{E})=\langle E_\xi\restriction \chi\mid \xi<\omega_1\rangle.$ Ergo, $\bar{M}$ thinks that  $``\pi(\mathbb{P}(\vec{E}))$ is the Extender Based Radin relative to $\vec{F}$''. But this definition is absolute between $\bar{M}$ and the universe because $\bar{M}$ is closed under $\kappa$-sequences of its elements (as so is $M$ and $\crit(\sigma)>\kappa$). In other words, $\pi(\mathbb{P}(\vec{E}))=\mathbb{P}(\vec{F})$. But then $\pi\colon \mathbb{P}(\vec{E})\cap M \to \mathbb{P}(\vec{F})$ establishes the desired isomorphisms: both with respect to $\leq_{\mathbb{P}(\vec{E})}$ and $\leq^*_{\mathbb{P}(\vec{E})}$.
 \end{proof}

 \begin{lemma}\label{lemma: power function}
     $\mathbb{P}(\vec{F})$ forces $``2^\kappa=\kappa^+$". 
     
     In particular, the same configuration is forced by $\mathbb{P}_M$.
 \end{lemma}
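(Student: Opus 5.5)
The plan is the standard upper-bound computation for $2^\kappa$ in an extender-based Radin extension, using that each $F_\alpha:=E_\alpha\restriction\chi$ is a $(\kappa,\chi)$-extender with $|\chi|=\kappa^+$. The ``in particular'' clause then follows at once from the preceding lemma ($\mathbb{P}_M\cong\mathbb{P}(\vec F)$ via the collapse $\pi$, both orders), so I will concentrate on $\mathbb{P}(\vec F)$.

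\emph{Step 1: counting.} Since $\chi<\kappa^{++}$ and the GCH holds, $|\chi|=\kappa^+$. A condition of $\mathbb{P}(\vec F)$ is a finite sequence whose lower part lies in $V_\kappa$, together with a top part $\langle f,T\rangle$: here $f$ is a partial function with $\dom(f)\in[\chi]^{\leq\kappa}$ and values in $V_\kappa$, and $T$ is a measure-one tree of $\dom(f)$-objects. Each $\dom(f)$-object is a function from a subset of $\dom(f)$ (of size ${<}\kappa$) into $V_\kappa$. Hence the tree $T$ is a subset of a set of size $\kappa^{\kappa}\cdot\kappa^+=\kappa^+$, and so $T$ has $2^{\kappa^+}$ possibilities. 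The naive count thus gives $|\mathbb{P}(\vec F)|\leq 2^{\kappa^+}=\kappa^{++}$. This is not good enough for a chain-condition argument, so I will use nice names with a finer analysis.

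\emph{Step 2: reduce to Prikry-order decisions.} Let $\dot X$ be a name for a subset of $\kappa$. For each $\beta<\kappa$, the Prikry property together with the strong Prikry property and the $\kappa^{++}$-cc (with the $\leq^*$-closure of the upper part) gives, densely often, a condition $p$ and a measure-one set of one-step extensions deciding ``$\check\beta\in\dot X$''. The Factor Lemma shows that only a bounded lower part $\mathbb{P}_{\vec e}$ is relevant: every bounded subset of $\kappa$ lies in $V[G_{\vec e}]$ for some $\vec e\in V_\kappa$. Therefore $X\cap\alpha$ is determined by a $\mathbb{P}_{\vec e}$-name in $V_\kappa$ (of which there are only $\kappa$ many) together with the generic. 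So it suffices to show that the sequence $\langle X\cap\alpha\mid\alpha<\kappa\rangle$ is coded by the \emph{generic object} on $\chi$ plus $\kappa$ many parameters from $V_\kappa$.

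\emph{Step 3: count via the generic functions.} The generic $G$ adds functions $F_\gamma^G\colon\kappa\to\kappa$ (the Radin/Prikry sequences) for $\gamma<\chi$, and $V[G]=V[\langle F^G_\gamma\mid\gamma<\chi\rangle]$. I will argue that each subset of $\kappa$ in $V[G]$ is computed from countably or ${<}\kappa$ many coordinates. Specifically, for a fixed $\dot X$, a maximal antichain of direct extensions deciding the statements has size at most $\kappa^+$, and its union of domains $D\in[\chi]^{\kappa^+}$. The key is that, for each $\alpha<\kappa$, the value of $X\cap\alpha$ is read off from the lower part of the generic below the next Radin point above $\alpha$, i.e.\ from $\langle F^G_\gamma\restriction\alpha'\mid\gamma\in D\rangle$. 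Consequently $X$ is determined by a function $\kappa\to V_\kappa$ in $V$ (the ``reading function'' obtained from the $\leq^*$-decisions via the normal measures) together with the generic. This yields $2^\kappa\leq(\kappa^{\kappa})^V\cdot|\chi|^{\kappa}=\kappa^+$ in $V[G]$.

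The main obstacle is Step 3. I must rigorously show that the decisions along the generic are uniformly captured by a single ground-model function of size $\kappa$, rather than by a set of $\kappa^{+}$ many conditions. For this I plan to mimic Merimovich's proof that $\mathbb{P}(\vec E)$ forces $2^\kappa=|\text{length}|$, i.e.\ $2^\kappa=\kappa^{++}$ for $\vec E$, with $\chi$ in place of $\kappa^{++}$. That argument gives $(2^\kappa)^{V[G]}\leq|\chi|^{+}$-many candidates and in fact $2^\kappa=|\chi|^{V}=\kappa^+$. The upper bound comes from the $\kappa^{++}$-cc together with counting nice names with respect to the ``$\leq^*$-closure plus fusion'' representation, while the lower bound $2^\kappa\geq\kappa^+$ is trivial.
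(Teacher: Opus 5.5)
Your reduction of the ``in particular'' clause to the isomorphism $\mathbb{P}_M\cong\mathbb{P}(\vec F)$ is fine. The upper bound $2^\kappa\le\kappa^+$, however, is never established. Step~3, which you yourself flag as the main obstacle, carries the entire content, and the mechanism you sketch there does not give the bound.

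A maximal antichain of size $\kappa^+$ with support $D\in[\chi]^{\kappa^+}$ only lets you count names by $|\chi|^{\kappa^+}=\kappa^{++}$. It also contradicts your own claim that ${<}\kappa$ many coordinates suffice. The assertion that $X$ is then determined by a single ground-model ``reading function'' $\kappa\to V_\kappa$ is precisely what has to be proved. Moreover, the sequence of $\mathbb{P}_{\vec e}$-names for the initial segments $X\cap\alpha$ lives in $V[G]$, not in $V$, so counting it is circular.

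Transplanting the usual computation for $\mathbb{P}(\vec E)$ does not help either. That bound comes from counting nice names using $|\mathbb{P}(\vec E)|=\kappa^{++}$ and the $\kappa^{++}$-cc, and the same count for $\mathbb{P}(\vec F)$ yields only $(\kappa^+)^{\kappa^+}=\kappa^{++}$.

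The missing idea is $\kappa$-properness \cite[Lemma~4.13]{Merimovich}. Given $p\Vdash\dot a\subseteq\check\kappa$, take $N\prec H_\Theta$ with $|N|=\kappa$, ${}^{<\kappa}N\subseteq N$ and $V_\kappa\cup\{p,\dot a,\mathbb{P}(\vec F)\}\subseteq N$, together with an $(N,\mathbb{P}(\vec F))$-generic $q\le^* p$. Each dense set $D_\alpha$ of conditions deciding $\check\alpha\in\dot a$ lies in $N$. Hence $q$ forces $\dot a=\{\alpha<\kappa\mid \exists r\in\dot G\cap N\,(r\Vdash\check\alpha\in\dot a)\}$. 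So every subset of $\kappa$ in the extension is the evaluation of a nice name over some $\mathbb{Q}\in[\mathbb{P}(\vec F)]^{\kappa}$. There are $(\kappa^+)^\kappa=\kappa^+$ such $\mathbb{Q}$, and each has $(2^\kappa)^\kappa=\kappa^+$ nice $\mathbb{Q}$-names for subsets of $\kappa$. Therefore $2^\kappa\le\kappa^+$.

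Your Step~1 count is also too coarse, and the sharp value is needed in that final tally. Fix $f$ with $d=\dom(f)\in[\chi]^{\le\kappa}$. The tree $T$ lives in $\ob(d)^{<\omega}$, and $d$-objects have domains of size ${<}\kappa$, so $|\ob(d)|\le|[d]^{<\kappa}|\cdot|V_\kappa|^{<\kappa}=\kappa$. Thus there are only $2^\kappa=\kappa^+$ trees for each $f$, and $|\mathbb{P}(\vec F)|=\chi^\kappa\cdot\kappa^\kappa\cdot 2^\kappa=\kappa^+$, not $\kappa^{++}$. With your bound of $\kappa^{++}$, even the properness argument would give $(\kappa^{++})^\kappa=\kappa^{++}$ candidate subposets of size $\kappa$, and the count would fail.
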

 \begin{proof}
  To justify that $\mathbb{P}(\vec{F})$ forces $2^\kappa=\kappa^+$ we will show that every $\mathbb{P}(\vec{F})$-name for a subset of $\kappa$ can be 'reduced' to a name for a subposet of $\mathbb{P}(\vec{F})$ of size $\kappa$. Since there are  $|\mathbb{P}(\vec{F})|^\kappa=\kappa^+$ many of such subposets $\mathbb{Q}$ and at most $|\mathbb{Q}|^{\kappa\cdot\kappa}=\kappa^+$ many $\mathbb{Q}$-nice names for subsets of $\kappa$ we will conclude that $$\one\forces_{\mathbb{P}(\vec{F})}\text{$`` 2^\kappa=\kappa^+$"}.$$

    For the scope of this  lemma write $\mathbb{P}:=\mathbb{P}(\vec{F})$. Let $\dot{a}$ be a $\mathbb{P}$-name and $p\in \mathbb{P}$ forcing $``\dot{a}\s \kappa$". Let $N\prec H_\Theta$ be with $|N|=\kappa$, ${}^{<\kappa} N\s N$, $N\cap \kappa^+\in \kappa^+$ and $\{p,\mathbb{P},\dot{a}\}\cup V_\kappa\s N$. By properness of $\mathbb{P}$, there is $q\leq^* p$ that is $(\mathbb{P},N)$-generic (see \cite[Lemma 4.13]{Merimovich}). For each $\alpha < \kappa$, denote $$D_\alpha :=\{r\in \mathbb{P}\mid r\parallel_{\mathbb{P}}\check{\alpha}\in\dot{a}\}.$$
    Clearly, this is dense open and it belongs to $N$, and thus $$q\forces_{\mathbb{P}} \check{D}_\alpha\cap \check{N}\cap \dot{G}\neq \emptyset.$$
   Define the $(\mathbb{P}\cap N)$-name $\dot{b}=\{(\check{\alpha},r)\mid \alpha <\kappa,\,r\in \mathbb{P}\cap N,\, r\forces_{\mathbb{P}}\check{\alpha}\in \dot{a}\}$.

  A routine checking shows that   $q\forces_{\mathbb{P}}\dot{b}=\dot{a}$.

   \smallskip

   We have thereby showed that for each $\mathbb{P}$-name $\dot{a}$ for a subset of $\kappa$,
   $$E_{\dot{a}}=\{r\in \mathbb{P}\mid \exists \mathbb{Q}\in [\mathbb{P}]^{\kappa}\,\exists \dot{b}\in V^{\mathbb{Q}}\, (r\forces_{\mathbb{P}}\dot{a}=\dot{b})\}$$
   is dense. Thus, given any $V$-generic $G\s \mathbb{P}$, $$\mathcal{P}(\kappa)^{V[G]} \s \{\dot{b}_G\mid \exists \mathbb{Q}\in [\mathbb{P}]^{\kappa}\;(\dot{b}\, \text{is a $\mathbb{Q}$-nice name})\}.$$
   But as argued above there are at most $\kappa^+$ such nice names, ergo right-hand-side union has size $\kappa^+$.
 \end{proof}

 \begin{lemma}
$({}^\omega \mathrm{Ord})^{V[G]}=({}^\omega \mathrm{Ord})^{V[G_M]}$.
 
    In particular,  $(V[G_ M],V[G])$ has the $\omega_1$-covering property.
 \end{lemma}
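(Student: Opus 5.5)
The plan is to show that every countable sequence of ordinals in $V[G]$ already lies in $V[G_M]$. Since $V[G_M]\s V[G]$, the inclusion $({}^\omega\mathrm{Ord})^{V[G_M]}\s({}^\omega\mathrm{Ord})^{V[G]}$ is trivial, so only the reverse inclusion needs an argument. The $\omega_1$-covering clause then follows at once: if $X\in V[G]$ is a countable set of ordinals, enumerate it as $s\colon\omega\to X$. Then $s\in V[G_M]$, so $Y:=\mathrm{ran}(s)=X$ is itself in $V[G_M]$.

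Fix $s\in({}^\omega\mathrm{Ord})^{V[G]}$. First I reduce to sequences of ordinals below $\kappa^{++}$. The forcing $\mathbb{P}$ is $\kappa^{++}$-c.c. (Merimovich), so $s$ is covered by a set in $V$ of size $\kappa^+$. Collapsing this set via its order type, $s$ becomes, modulo a ground-model bijection, an $\omega$-sequence in $\kappa^+$. Since $V\s V[G_M]$, it suffices to handle sequences $s\colon\omega\to\kappa^+$.

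Next I use the analysis of countable sequences in $\mathbb{P}$. The key tool is the Prikry property together with properness, as in \cite[Lemma~4.13]{Merimovich} and as already exploited in Lemma~\ref{lemma: power function}. Fix a name $\dot s$ and a condition $p$. I take an elementary $N\prec H_\Theta$ with $\dot s,p,\mathbb{P}\in N$, $|N|=\kappa$, $V_\kappa\s N$ and ${}^{<\kappa}N\s N$. I then find $q\leq^* p$ that is $(\mathbb{P},N)$-generic, so that each $\dot s(n)$ is decided by conditions in $N\cap G$.

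The heart of the argument, and the step I expect to be the main obstacle, is to show that the relevant deciding conditions can be taken in $M$, or more precisely that their decisions depend only on $r\restriction_M$. Concretely, I would argue that for each $n$ the statement ``$\dot s(\check n)=\check\beta$'' is decided by a condition whose ``upper'' part involves only coordinates in $\dom(f)\cap\chi$. The idea is to combine the Factor Lemma with the Prikry property: direct extensions and the sequence of Radin points are read off from the normal measure coordinate $\kappa\in M$. An ordinal below $\kappa^+$ is determined, via a $\leq^*$-dense set argument applied to the $\kappa^+$-sized information, by the generic restricted to a set of $\kappa$-many coordinates. Since $\chi=M\cap\kappa^{++}$ has cofinality $\kappa^+$ and ${}^\kappa M\s M$, one can move those coordinates into $M$ by an automorphism-free argument: $\mathbb{P}_M\cong\mathbb{P}(\vec F)$ and $\restriction_M$ is a residue, so one can try to prove that $\dot s$ is equivalent below $q$ to a $\mathbb{P}_M$-name.

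If this direct approach stalls, I would fall back on the following alternative. Countable sequences in a Radin-type extension with cofinality $\omega_1$ are added by an initial segment $G_{\vec e}$ together with the Prikry sequence at $\kappa$, as in the standard Radin analysis. Any countable subset of the Radin club is bounded below $\kappa$ because $\cf(\kappa)=\omega_1$, so it already lies in $V[G_{\vec e}]\s V[G_M]$ by (*). What remains is to see that the interpretation of each $\dot s(n)$ can be computed in $V[G_M]$ from these bounded pieces and the ground model, for which the residue map and Proposition~\ref{cor: submodel} suffice.
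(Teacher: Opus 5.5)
There is a genuine gap at the one step that matters: neither of your routes actually proves $s\in V[G_M]$. The reduction to $s\colon\omega\to\kappa^+$ and the derivation of $\omega_1$-covering are fine.

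Your main route is only announced (``one can try to prove''). It is meant to make the decisions about $\dot s(n)$ depend only on $r\restriction_M$ and to move the relevant $\kappa$ many coordinates into $M$. The tools you cite do not deliver this. The residue map and Proposition~\ref{cor: submodel} only give that $G\cap M$ is $\mathbb{P}_M$-generic; they say nothing about replacing an arbitrary $\mathbb{P}$-name by a $\mathbb{P}_M$-name.

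The fallback applies the cofinality-$\omega_1$ argument to the wrong object, namely countable pieces of the Radin club. It then asserts that each $\dot s(n)$ can be computed in $V[G_M]$ from these pieces and the ground model. That assertion is essentially the content of the lemma. The same goes for your claim that countable sequences are added by $G_{\vec e}$ plus the Prikry sequence at $\kappa$. This is not a standard fact for the extender-based forcing, since the $\kappa^{++}$-c.c.\ alone only covers $s$ by a ground-model set of size $\kappa^+$.

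The missing step is to transfer $s$ itself into $\kappa$ by a ground-model map, and only then use $\cf^{V[G]}(\kappa)=\omega_1$. You already have what is needed, in either of two ways:
\begin{itemize}
\item Your $(\mathbb{P},N)$-generic condition gives $\mathrm{ran}(s)\s N$ with $N\in V$ and $|N|=\kappa$. This is exactly how the paper uses $\kappa$-properness.
\item Alternatively, after your reduction to $s\colon\omega\to\kappa^+$, $\kappa^+$ is still regular in the cardinal-preserving extension $V[G]$. So $s$ is bounded by some $\gamma<\kappa^+$.
\end{itemize}
Fix in $V$ an injection $\Phi$ of $N$ (or of $\gamma$) into $\kappa$. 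Then $\Phi\circ s\colon\omega\to\kappa$ is bounded by some $\lambda<\kappa$, because $\cf^{V[G]}(\kappa)=\omega_1$. Hence $\Phi\circ s$ is coded by a subset of $\lambda$.

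By the Factor Lemma and the Prikry property, this code lies in $V[G_{\vec e}]\s V[G_M]$. Equivalently, use $\mathcal{P}^{V[G]}(\alpha)=\mathcal{P}^{V[G_M]}(\alpha)$ for $\alpha<\kappa$. Therefore $s=\Phi^{-1}\circ(\Phi\circ s)\in V[G_M]$. No analysis of which coordinates of the deciding conditions lie in $M$ is needed; the step you flagged as the main obstacle is not where the difficulty lies.
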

 \begin{proof}
     Working in $V$, let $p\in G$ be a condition and $\dot{f}$  be a $\mathbb{P}$-name such that $p\forces_{\mathbb{P}}\dot{f}\colon \omega\rightarrow \mathrm{Ord}$. We will show that $\dot{f}_G\in V[G_M]$. 

\smallskip

 An elementary submodel $\mathcal{M}\prec H_\Theta$ is \emph{adequate} if $|\mathcal{M}|=\kappa$, $\mathcal{M}^{<\kappa}\s \mathcal{M}$, $\mathcal{M}\cap \kappa^+\in \kappa^+$ and $p, \dot{f}, \mathbb{P}\in \mathcal{M}$. By work of Merimovich, $\mathbb{P}$ is a $\kappa$-proper forcing notion \cite[Lemma 4.13]{Merimovich}. In particular, the set
$$\{q\leq_{\mathbb{P}} p\mid \exists \mathcal{M}\, (\text{$\mathcal{M}$ is adeduate and $q$ is $\langle \mathcal{M}, \mathbb{P}\rangle$-generic})\}$$
is dense below $p.$ 

Let $p^*\in G$ belonging to the above set and $\mathcal{M}$ an adequate elementary submodel witnessing it. For each $n<\omega$, consider the dense open set of conditions deciding $\dot{f}(n)$; more explicitly, we are interested on the  set
$$D_n:=\{q\in \mathbb{P}\mid \exists\alpha\in \mathrm{Ord}\, (q\forces_{\mathbb{P}}\dot{f}(n)=\alpha)\}.$$
Clearly, $D_n\in \mathcal{M}$ for all $n<\omega$. So, by $\langle \mathcal{M}, \mathbb{P}\rangle$-genericity, for each $n<\omega$,
$$p^*\forces_{\mathbb{P}}\check{D}_n\cap \dot{G}\cap \check{\mathcal{M}}\neq \emptyset.$$
Since $p^*\in G$, in fact $D_n\cap G\cap \mathcal{M}$ is non-empty for all $n<\omega$. 

For each $n<\omega$, let $q\in G\cap \mathcal{M}$ deciding the value of $\dot{f}(n)$. By elementarity of $\mathcal{M}$, there is $\alpha\in \mathcal{M}$ such that $q\forces_{\mathbb{P}}\dot{f}(n)=\alpha.$ All in all, this shows that 
$$\text{range}(\dot{f}_G)\s \mathcal{M}.$$
Working in $V$, let $\Phi\colon \mathcal{M}\leftrightarrow \kappa$ be a bijection and consider $f^*$ the composition $\Phi\circ \dot{f}_G\colon \omega\rightarrow \kappa$. We claim that $f^*\in V[G_M]$, which yields $\dot{f}_G\in V[G_M]$.

Indeed, since $\cf^{V[G]}(\kappa)=\omega_1$ the function $f^*$ is bounded in $\kappa$; say that $\lambda<\kappa$ is such a bound. Let $q\leq p^*$ be a condition in $G$ forcing $\dot{f}^*\colon \check{\omega}\to \check{\lambda}$.

By the usual factoring lemma of Radin-like forcings we have that $\mathbb{P}/q$ is isomorphic to a product $(\mathbb{P}_{\vec{e}}/q_0)\times (\mathbb{P}/q_1)$ where the latter has a $\leq^*$-ordering that is more than $(\lambda^+)^V$-closed. {Combining this with the Prikry property one can show that the set}
$$\{r\leq_{\mathbb{P}}q\mid \exists \dot{g}\in V^{\mathbb{P}_{\vec{e}}}\, (r\forces_{\mathbb{P}}\dot{g}=\dot{f}^*)\}\footnote{Here, in the forced sentence, we are identifying $\dot{g}$ with its standard $\mathbb{P}$-name.}$$
is dense below $q$. 

Denote by $G_{\vec{e}}$ the $\mathbb{P}_{\vec{e}}$-generic filter obtained after projecting $G$ via the standard projection between $\mathbb{P}$ and $\mathbb{P}_{\vec{e}}$. Thus,  $f^*\in V[G_{\vec{e}}]\s V[G_M]$. 
 \end{proof}

 \begin{lemma}
     $V[G_M]$ is cofinality-correct. 
 \end{lemma}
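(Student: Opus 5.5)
The plan is to prove that for every ordinal $\alpha$, $\cf^{V[G_M]}(\alpha)=\cf^{V[G]}(\alpha)$. Since $V[G_M]\s V[G]$, we always have $\cf^{V[G]}(\alpha)\leq\cf^{V[G_M]}(\alpha)$, so only the reverse inequality needs work. It suffices to show two things. First, every $V[G]$-regular cardinal is regular in $V[G_M]$. Second, $V[G_M]$ and $V[G]$ have the same cardinals, at least in the relevant range. Then for any $\alpha$, if $\theta=\cf^{V[G]}(\alpha)$, there is a cofinal map $\theta\to\alpha$ in $V[G]$. We need to locate a cofinal subset of order type $\theta$ in $V[G_M]$, and we reduce this to regular cardinals: $\cf^{V[G_M]}(\alpha)$ is $V[G_M]$-regular, and we will show it is also $V[G]$-regular. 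Two regular cardinals that are cofinal in the same $\alpha$ must coincide in $V[G]$, because cofinality is unique among regular cardinals.

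So the core claim is that every $V[G_M]$-regular cardinal $\theta$ remains regular in $V[G]$. I would split into cases.

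\textbf{Case $\theta<\kappa$.} By the lemma equating $\mathcal{P}^{V[G]}(\alpha)$ and $\mathcal{P}^{V[G_M]}(\alpha)$ for $\alpha<\kappa$, the two models have the same bounded subsets of $\kappa$. Any cofinal map $\cf^{V[G]}(\theta)\to\theta$ is coded by a bounded subset of $\kappa$, so it already lies in $V[G_M]$. Hence regularity agrees.

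\textbf{Case $\theta=\kappa$.} In $V[G]$, $\kappa$ has cofinality $\omega_1$ via the Radin club. The club given by $G_M$ is the same club, since $p\restriction_M$ keeps $p_\leftarrow$, so the normal parts agree. Thus $\cf^{V[G_M]}(\kappa)=\omega_1$ as well.

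\textbf{Case $\theta>\kappa$.} Here I would use chain conditions. $\mathbb{P}$ is $\kappa^{++}$-c.c., and it preserves $\kappa^+$ by Merimovich's properness and the Prikry property. $\mathbb{P}_M\cong\mathbb{P}(\vec F)$ behaves in the same way, and $V$ satisfies GCH. Therefore both $V[G]$ and $V[G_M]$ preserve all $V$-cardinals above $\kappa$, and cofinalities of $V$-ordinals of $V$-cofinality $\geq\kappa^+$ are unchanged in both. For ordinals $\alpha$ with $\cf^V(\alpha)\leq\kappa$, i.e.\ equal to some $V$-regular $\lambda\leq\kappa$, both models compute $\cf(\alpha)=\cf(\lambda)$, which reduces to the first two cases. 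For $\lambda=\kappa$ we get $\omega_1$ in both models. For $\lambda<\kappa$, both models agree because their bounded subsets of $\kappa$ coincide.

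Cardinal preservation also gives $(\kappa^{++})^{V[G_M]}=\kappa^{++}$. This matters because $\chi=(\kappa^{++})^{M_{E_\alpha\restriction\chi}}$ (the Remark), so $\mathbb{P}(\vec F)$ collapses nothing.

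The main obstacle is verifying that $\mathbb{P}(\vec F)$ preserves $\kappa^+$, and the other cardinals above $\kappa$ (plausibly by the $\kappa^+$-c.c., via GCH and $|\mathbb{P}(\vec F)|=\kappa^+$ up to the usual $\Delta$-system argument on Cohen parts). It also requires checking that below $\kappa$, cofinalities computed in $V[G_M]$ genuinely match those in $V[G]$ at ordinals of the form $\kappa_\xi$ on the Radin club. I would handle the latter using the factor lemma together with the commutative diagram $\pi_{\vec e}\restriction\mathbb{P}_M$: every bounded set, including the relevant initial segment of the club, lies in $V[G_{\vec e}]\s V[G_M]$.
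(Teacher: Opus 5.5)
Your argument is correct, but it takes a different route from the paper's.

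\textbf{The paper's route.} The paper argues by a sandwich. It defines the projection $\pi_{\{\kappa\}}$ of $\mathbb{P}$ onto the ordinary Radin forcing built from the normal measures $\langle E_\alpha(\{\kappa\})\mid\alpha<\omega_1\rangle$. It observes that this projection factors through $\restriction_M$, so that $V[\pi_{\{\kappa\}}[G]]\s V[G_M]\s V[G]$. It then quotes the standard fact that $V[G]$ computes cofinalities exactly as the normal Radin extension does. The squeeze $\cf^{V[G]}(\alpha)\leq\cf^{V[G_M]}(\alpha)\leq\cf^{V[\pi_{\{\kappa\}}[G]]}(\alpha)=\cf^{V[G]}(\alpha)$ then finishes the proof at once.

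\textbf{Your route.} You reduce to showing that every $V[G_M]$-regular cardinal is $V[G]$-regular, and you split by size.
\begin{itemize}
\item Below $\kappa$, you reuse the already established agreement of $\mathcal{P}(\alpha)$ for $\alpha<\kappa$.
\item At $\kappa$, you note that the Radin club lies in $V[G_M]$, so $\kappa$ is not $V[G_M]$-regular.
\item Above $\kappa$, a $V[G_M]$-regular $\theta$ is $V$-regular. It therefore stays regular in $V[G]$ by the $\kappa^{++}$-c.c.\ and the preservation of $\kappa^+$.
\end{itemize}

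\textbf{Comparison.} The paper's argument is shorter and identifies the cofinality function concretely with that of a normal Radin extension. However, it relies on a comparison between extender-based and normal Radin forcing that the paper leaves to ``the usual Radin-like analysis''. Yours uses only facts already on the table: the bounded-subsets lemma and cardinal preservation by $\mathbb{P}$.

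\textbf{Unneeded steps in your write-up.} The ``main obstacle'' you flag is not actually one. Since $V\s V[G_M]\s V[G]$, cardinal preservation by $\mathbb{P}_M\cong\mathbb{P}(\vec F)$ is inherited from $\mathbb{P}$. Moreover, for the direction you need, only regularity in $V[G]$ matters. Likewise, the points of the Radin club below $\kappa$ are already covered by the bounded-subsets lemma. Finally, the Remark about $\chi=(\kappa^{++})^{M_{E_\alpha\restriction\chi}}$ plays no role in this lemma.
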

 \begin{proof}
For a condition $p=\langle f, T\rangle \in \mathbb{P}^*$ define its projection to the normal measure by recursion as follows: For $\langle f, T\rangle\in \mathbb{P}^*$ denote $$\pi_{\{\kappa\}}(\langle f, T\rangle)=\langle f\restriction\{\kappa\}, T\restriction\{\kappa\}\rangle.$$ In general, for a condition $p=\langle p_i\mid i\leq n\rangle\in \mathbb{P}$, define  by recursion $$\pi_{\{\kappa\}}(p):=\pi_{\{\kappa\}}(\langle p_i\mid i<n\rangle){}^\smallfrown\pi_{\{\kappa\}}(p_n).$$
As it turns, $\pi_{\{\kappa\}}$ defines a projection between $\mathbb{P}$ and its image, this latter being  the usual Radin forcing defined with respect to the Mitchell increasing sequence of normal measures $\langle E_\alpha(\{\kappa\})\mid \alpha <\omega_1\rangle$ on $\ob(\{\kappa\})$. 

Note that the diagram 
$$
\begin{tikzcd}
    \mathbb{P} \arrow[r, "{\pi_{\{\kappa\}}}"] \arrow[d, "\restriction_M"'] 
     & \mathbb{P}_{\{\kappa\}}  \\
\mathbb{P}_M \arrow[ru, "{\pi_{\{\kappa\}}\restriction \mathbb{P}_M}" '] &  
\end{tikzcd}
$$
commutes and that 
$\pi_{\{\kappa\}}\restriction \mathbb{P}_M$ is  a projection. 

In particular, we get the inclusions $V[\pi_{\{\kappa\}}[G]]\s V[G_M]\s V[G]$. 

The usual Radin-like analysis show that $V[G]$ and $V[\pi_{\{\kappa\}}[G]]$ compute cofinalities alike, ergo so do $V[G_M]$ and $V[G]$.
 \end{proof}

 \begin{lemma}
      $(V[G_M],V[G])$ satisfy full covering at $\alpha$ for all $\alpha <\kappa$.
 \end{lemma}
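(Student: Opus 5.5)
The lemma follows from the equality of bounded subsets established above. By the earlier lemma, for every $\alpha<\kappa$ we have $\mathcal{P}^{V[G]}(\alpha)=\mathcal{P}^{V[G_M]}(\alpha)$. Fix $\alpha<\kappa$, a cardinal $\theta$ of $V[G]$, and a set $X\in V[G]$ with $X\s\alpha$ and $|X|^{V[G]}<\theta$. Since $X$ is a subset of $\alpha$, the equality gives $X\in V[G_M]$ outright. Taking $Y:=X$ then satisfies $X\s Y\s\alpha$ and $|Y|^{V[G]}<\theta$. So the $\theta$-cover property at $\alpha$ holds for every $\theta$, which is full covering at $\alpha$.

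The only real content sits in the earlier lemma, namely that every bounded subset of $\kappa$ in $V[G]$ already lies in $V[G_{\vec e}]$. Let me recall how that goes, since it is the step that actually needs justification. Fix $\alpha<\kappa$ and a name $\dot a$ for a subset of $\alpha$. Use the Factor Lemma to pass to a condition $q\in G$ with $\mathbb{P}/q\simeq \mathbb{P}_{\vec e}/q_0\times \mathbb{P}/q_1$, choosing $q$ so that the upper part has a $\leq^*$-ordering that is $\alpha^+$-closed; this is possible because the Radin club is unbounded in $\kappa$. Then the Prikry property for the upper factor lets us decide each statement ``$\check\beta\in\dot a$'', for $\beta<\alpha$, up to a $\mathbb{P}_{\vec e}$-name. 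Running this through all $\beta<\alpha$ along a $\leq^*$-decreasing sequence, and using the closure at limit stages, a density argument puts $\dot a_G$ in $V[G_{\vec e}]$.

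Finally, the inclusions $(*)$, $V[G_{\vec e}]\s V[G_M]\s V[G]$, coming from the commutative projection diagram, transfer this to $V[G_M]$. Thus the only potential obstacle is the Radin-style bounded-subset analysis, which is standard for Merimovich's forcing and was already invoked above. Nothing new is needed: covering is witnessed by $X$ itself.
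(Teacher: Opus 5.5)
Your proof is correct and follows the paper's argument: both reduce full covering at $\alpha<\kappa$ to the equality $\mathcal{P}^{V[G]}(\alpha)=\mathcal{P}^{V[G_M]}(\alpha)$, which comes from the standard Radin analysis putting bounded subsets of $\kappa$ into some $V[G_{\vec e}]$ together with the inclusions $V[G_{\vec e}]\subseteq V[G_M]\subseteq V[G]$, after which $X$ is its own cover. Your recap of the Radin step is looser than necessary: deciding ``$\check\beta\in\dot a$'' up to a $\mathbb{P}_{\vec e}$-name requires the upper $\leq^*$-order to be closed relative to the lower factor (for instance, beyond its chain condition), not merely $\alpha^+$-closed. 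The paper also cites this step as standard, so it does not affect the lemma.
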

 \begin{proof}
     This follows  from $\mathcal{P}(\alpha)^{V[G_M]}=\mathcal{P}(\alpha)^{V[G]}$ for all $\alpha <\kappa$. For every $\alpha <\kappa$ there is an extender sequence $\vec{e}\in V_\kappa$ with $\mathcal{P}^{V[G_{\vec{e}}]}(\alpha)=\mathcal{P}^{V[G]}(\alpha)$ but the former is contained in $V[G_M]$ because of \eqref{eq: inclusion between models} on page~\pageref{eq: inclusion between models}.
 \end{proof}

  \begin{lemma}\label{lemma: failure of covering}
     $(V[G_M],V[G])$ does not satisfy $\omega_2$-covering at $\kappa$.
 \end{lemma}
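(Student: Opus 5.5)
The plan is to argue by contradiction through Lemma~\ref{lemma: cover}, applied with $M:=V[G_M]$ and ambient universe $V[G]$. Suppose $(V[G_M],V[G])$ had the $\omega_2$-cover property at $\kappa$. In $V[G]$ we have $\cf(\kappa)=\omega_1$, so $\omega_2=\cf(\kappa)^+$, and this is exactly the covering hypothesis of Lemma~\ref{lemma: cover}. Two things then remain: check that the other hypothesis $2^{\omega_1}<\kappa$ holds in $V[G]$, and compare the resulting equality $|(2^\kappa)^{V[G_M]}|=(2^\kappa)^{V[G]}$ with the actual values of the two power sets.

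First, the hypothesis. The ground model satisfies GCH, and by the Radin-like analysis every bounded subset of $\kappa$ in $V[G]$ lies in some $V[G_{\vec e}]$ with $\vec e\in V_\kappa$. Hence $2^{\omega_1}$ in $V[G]$ is computed in a small forcing extension and stays $\omega_2<\kappa$; in fact $\kappa$ remains a strong limit.

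Second, the value of $2^\kappa$ in $V[G_M]$. By Lemma~\ref{lemma: power function}, $\mathbb{P}_M\cong\mathbb{P}(\vec F)$ forces $2^\kappa=\kappa^+$. Since $V[G]$ and $V[G_M]$ are cardinal-preserving extensions of $V$, the lemma then yields $2^\kappa=\kappa^+$ in $V[G]$.

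Third, the value of $2^\kappa$ in $V[G]$. Here I will use the standard fact about Merimovich's extender-based Radin forcing over GCH with $(\kappa,\kappa^{++})$-extenders: it adds $\kappa^{++}$ distinct subsets of $\kappa$, one generic function for each $\alpha<\kappa^{++}$ with $\alpha\in\dom f$, whose values on the Radin club are pairwise eventually different. It also preserves $\kappa^{++}$, by the $\kappa^{++}$-cc. This gives $2^\kappa\geq\kappa^{++}$ in $V[G]$, contradicting $2^\kappa=\kappa^+$. The same computation supplies the "moreover" clause: $(2^\kappa)^{V[G]}=\kappa^{++}$, where the upper bound comes from a nice-name count using GCH and $|\mathbb{P}|=\kappa^{++}$.

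I expect the main obstacle to be the third step: showing that the $\kappa^{++}$ generic sequences really are distinct in $V[G]$. The direct route is a density argument. Given $\alpha\neq\beta$ and a condition $p$, first extend $\dom f$ to contain both $\alpha$ and $\beta$. Then measure-one many objects $\nu$ satisfy $\nu(\alpha)\neq\nu(\beta)$, because $j_E(\alpha)\neq j_E(\beta)$. A generic-type argument then shows that the induced subsets of $\kappa$ differ.

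Note also that $\kappa^{++}$ in $V[G]$ exceeds $\chi$. So one can see the failure of covering as a witness-level phenomenon: the generic functions indexed by $\alpha\in\kappa^{++}\setminus\chi$ are new over $V[G_M]$. The cardinality argument above avoids having to analyze them directly.
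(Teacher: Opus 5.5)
Your proposal is correct and is essentially the paper's argument. The paper also derives the failure from Lemma~\ref{lemma: cover} applied to $(V[G_M],V[G])$, together with $(2^\kappa)^{V[G_M]}=\kappa^+<\kappa^{++}=(2^\kappa)^{V[G]}$. You spell out checks the paper leaves implicit: $\cf^{V[G]}(\kappa)=\omega_1$, preservation of $\kappa^+$ and $\kappa^{++}$, and that $\kappa$ stays strong limit in $V[G]$, which the proof of Lemma~\ref{lemma: cover} actually uses to ensure $\mathrm{range}(F_X)\subseteq\kappa$.
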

 \begin{proof}
 This follows from Lemma~\ref{lemma: cover} using that $(2^{\kappa})^{V[G_M]}<(2^\kappa)^{V[G]}$.
 \end{proof}

     \begin{remark}
       As a bonus result, let us observe that   the pair $(V[G_M], V[G])$ has the full approximation property below $\kappa$ but it fails to have the  $\omega_1$-approximation at $\kappa$: Let $C_{\bar\alpha}\s \kappa$ be in $V[G]\setminus V[G_M]$. Then, $C_{\bar\alpha}\cap x\in V[G_M]$ for all $x\in V[G_M]$ with cardinality less than  $\omega_1$ because $C_{\bar\alpha}\cap x$ is bounded below $\kappa$ and as a result is a member of $V[G]_\kappa = V[G_M]_\kappa$. 

       Our method also gives an optimal failure of Thei's \emph{Scale Property} \cite{SebaPhD} at a singular cardinal of uncountable cofinality. 
     \end{remark}

     \section{Open questions}\label{sec: open questions}
Theorem~\ref{thm: compactness} showed that a singular cardinal $\kappa$ of uncountable cofinality with $2^{\cf(\kappa)}<\kappa$ cannot be the first place of disagreement between $\HOD$ and the universe. The proof  made a crucial use \cite[Theorem~3.5]{GolPov} which in turn only assumes $\kappa$ to be a singular cardinal of uncountable cofinality. This raises a question -- to what extent is the assumption   $2^{\cf(\kappa)}<\kappa$ necessary?

\begin{question}
    Suppose that $\kappa$ is a singular cardinal of uncountable cofinality and $\HOD$ is cardinal-correct. Is it consistent that $\Delta(\HOD, V)=\kappa$?
\end{question}

In addition, what happens if we relax the degree of agreement between $\HOD$ and $V$ by just requiring the cardinality of the power set functions $\lambda \mapsto 2^\lambda$ of $V$ and $\HOD$ to agree? Should we still expect compactness?
\begin{question}\label{que: agreement 2kappa}
    Suppose that $\kappa$ is a  singular strong limit cardinal  of uncountable cofinality, that $\{\lambda < \kappa\mid (2^\lambda)^{\HOD}=2^\lambda\}$ is stationary and $(2^\kappa)^{\HOD}$ is a cardinal. Must $(2^\kappa)^{\HOD}=2^\kappa$?
\end{question}
A natural attempt to answering this question would be to code most (but not all) of the subsets of $\lambda < \kappa$ from $V[G_M]$ into the power-set function pattern and show that the $\HOD$ of the resulting generic extension is a subclass of $V[G_M]$, hence a model of GCH at $\kappa$. The obstacle with this approach is ensuring that the automorphisms of $\mathbb{P}$ fixes the name for the coding poset (this would yield an automorphism of $\mathbb{P}\ast \dot {\mathrm{Code}}$). Unfortunately this argument does not quite work because the natural automorphism between  cones of $\mathbb{P}$ by two  incompatible conditions must necessarily swap forcing information below $\kappa$ and as a result this automorphism will move the name for the coding poset $\dot{\mathrm{Code}}$. Since in $V[G_M]$ the various instances for the failure of GCH are witnessed by the existence of long scales it is natural to ask whether or not coding this scales into HOD actually encodes into this model the scales that live on the top cardinal $\kappa$. Thus, we ask:
\begin{question}
Suppose that $\HOD$ is cofinality-correct and $\kappa$ is a strong limit singular cardinal with $\cf(\kappa)\geq \omega_1$. Let $\langle \kappa_\alpha\mid \alpha < \omega_1\rangle\in \HOD$ be a club on $\kappa$. Assuming that  $\{\alpha <\kappa\mid \mathrm{tcf}(\prod^{\HOD}_{\beta < \alpha}\kappa_\beta)=\mathrm{tcf}(\prod_{\beta < \alpha}\kappa_\beta)\}$ is stationary, must $\mathrm{tcf}(\prod^{\HOD}_{\alpha < \cf(\kappa)}\kappa_\alpha)=\mathrm{tcf}(\prod_{\alpha < \cf(\kappa)}\kappa_\alpha)$.
\end{question}

\smallskip

In light of Lemma~\ref{lemma: cover}, if Question~\ref{que: agreement 2kappa} were to have a negative consistent answer,  then the following question would have a positive consistent answer:

\begin{question}
 Suppose that $\kappa$ is singular strong limit  of uncountable cofinality. Is it possible for $\kappa$ to be the first cardinal $\lambda$ such that $(\HOD, V)$ fails to satisfy $\cf(\lambda)^+$-cover at $\lambda$?
\end{question}
Of course, by virtue of Corollary~\ref{cor: compactness of covering}, the GCH has to fail at multiple cardinals below $\kappa$.
\section*{Acknowledgements}
The results of this paper are part of the \emph{SQuaRE}  project \emph{``Compactness and ultrafilter combinatorics''} sponsored by the  American Institute of Mathematics. The authors are very grateful to AIM for their hospitality and generous support. Cummings was supported by NSF grant DMS2054532. Goldberg was supported by National Science Foundation under Grant No. 2401789.
Hayut was supported by the Israel Science Foundation, Grants no. 1967/21 and 3469/25. Poveda was supported by project  PID2023-147428NB-I00 from the Spanish Government by the \emph{Alexander von Humboldt Foundation} and by Fundación BBVA\footnote{The BBVA Foundation assumes no responsibility for the opinions, comments, or content included in the project and/or in any results derived from it, all of which are the sole and exclusive responsibility of their authors.} through \emph{Beca Leonardo de Investigación Científica y Creación Cultural 2026}.

    \bibliographystyle{alpha}
\bibliography{citations}
\end{document}